\newtheorem{theorem}{Theorem}[section]
\newtheorem{lemma}[theorem]{Lemma}
\newtheorem{thm}[theorem]{Theorem}
\newtheorem{prop}[theorem]{Proposition}
\newtheorem{rem}[theorem]{Remark}
\newtheorem{coro}[theorem]{Corollary}
\newtheorem{defn}[theorem]{Definition}
\newtheorem{example}[theorem]{Example}
\newtheorem{con}[theorem]{Conjecture}
\newcommand{\ra}{\rightarrow}
\newcommand{\mo}{\mathcal{O}}
\newcommand{\mf}{\mathcal{F}}
\newcommand{\mg}{\mathcal{G}}
\newcommand{\ma}{\mathcal{A}}
\newcommand{\mb}{\mathcal{B}}
\newcommand{\me}{\mathcal{E}}
\newcommand{\mi}{\mathcal{I}}
\newcommand{\mt}{\mathcal{T}}
\newcommand{\cl}{\mathcal{L}}
\newcommand{\ts}{\mathsf{S}}
\newcommand{\rs}{\mathbf{S}}
\newcommand{\ri}{\mathbf{I}}
\newcommand{\rj}{\mathbf{J}}
\newcommand{\rii}{\mathbf{II}}
\def\<{\langle}
\def\>{\rangle}
\newcommand{\p}{\mathbb{P}}
\newcommand{\bz}{\mathbb{Z}}
\newcommand{\bq}{\mathbb{Q}}
\newcommand{\bc}{\mathbb{C}}
\begin{document}
\fontsize{12pt}{14pt} \textwidth=14cm \textheight=21 cm
\numberwithin{equation}{section}
\title{Rank zero Segre integrals on Hilbert schemes of points on surfaces.}
\author{Yao Yuan}
%\address{Beijing National Center for Applied Mathematics, Academy for Multidisciplinary Studies, Capital Normal University, 100048, Beijing, P. R. China}
%\email{6891@cnu.edu.cn}
\subjclass[2010]{Primary 14D22, 14J26}
\thanks{The author is supported by NSFC 21022107.  }

\begin{abstract}We prove the conjecture of Marian-Oprea-Pandharipande on the Segre series associated to a rank zero class.  Hence the rank zero Segre integrals on the Hilbert schemes of points for all surfaces are determined. 

~~~

\textbf{Keywords:} Hilbert schemes of points, Segre classes,  tautological bundles.\end{abstract}

\maketitle
\tableofcontents
%%%%%%%%%%%%%%%%%%%%%%%%%%%%%%%%%%%%%%%%%%%%%%%%%%%%%%%%%%%%%%%%%%%%%%%%%%%%%%%%
%%%%%%%%%%%%%%%%         Section 1     introduction    %%%%%%%%%%%%%%%%%%%
\section{Introduction.}

%%%%%%%%%%%%%%%%      Subsection  1.1.    %%%%%%%%%%%%%%%%%%%%%
\subsection{Segre classes on the Hilbert scheme of points.}
For any coherent sheaf $\mf$ on a smooth projective variety $X$, the \emph{Segre polynomial} $s_t(\mf):=1+\sum_{i\geq 1}s_i(\mf)t^i$ of $\mf$ is by definition the inverse of its Chern polynomial $c_t(\mf)=1+\sum_{i\geq 1}c_i(\mf)t^i$, i.e. $s_t(\mf)=c_t(-\mf)$.  $s_i(\mf)$ is called the \emph{$i$-th Segre class} of $\mf$.  By an direct computation we have that
\[c_1(\mf)=-s_1(\mf),~c_2(\mf)=s_1(\mf)^2-s_2(\mf),\cdots,c_n(\mf)=-s_1(\mf)c_{n-1}(\mf)-\cdots-s_n(\mf).\]

Let $S$ be a projective surface over the complex number $\bc$.  Denote by $S^{[n]}$ the Hilbert scheme of $n$ points on $S$.  
We have a universal family $\Sigma_n=\{(\xi,x)|x\in\xi\}\subset S^{[n]}\times S$.
Denote by $p_n,q_n$ the projections from $S^{[n]}\times S$ to $S^{[n]}$ and $S$ respectively.  

Let $K(X)$ be the Grothendieck group of coherent sheaves on $X$.  We define a group homomorphism as follows
\begin{eqnarray}\label{ghs}K(S)&\ra&K(S^{[n]})\nonumber\\
\alpha&\mapsto&\alpha^{[n]}:=R(p_n)_*(q_n^*\alpha\cdot\mo_{\Sigma_n}),
\end{eqnarray}
where $\mf\cdot\mg:=\sum_{i\geq0}(-1)^iTor^i(\mf,\mg)$ and $Rf_*\mf=\sum_{i\geq 0}(-1)^iR^if_*\mf$.  If $\alpha$ is the class of a vector bundle $\me$ of rank $r$ over $S$, then $\alpha^{[n]}$ is the class of $(p_n)_*(q_n^*\me|_{\Sigma_n})$ which is a vector bundle of rank $nr$ over $S^{[n]}$.

For any $\alpha\in K(S)$, the \emph{Segre series} $\ts_{\alpha}(z)$ associated to $\alpha$ is defined as follows.
\begin{equation}\ts_{\alpha}(z)=\sum_{n=0}^{\infty}z^n\int_{S^{[n]}}s_{2n}(\alpha^{[n]}).\end{equation}
Let $K_S$ be the canonical divisor class on $S$.  Denote by $c_i(\alpha)$ ($r(\alpha)$, $\chi(\alpha)$, resp.) the $i$-th Chern class (rank, Euler characteristic, resp.) of $\alpha$.  
By \cite{EGL} we have 
\begin{thm}\label{egl}There are five universal series $A_0,A_1,A_2,A_3,A_4\in \bq[[z]]$ independent of the surface $S$ and depending on $\alpha$ only through the rank $r(\alpha)$, such that 
\[\ts_{\alpha}(z)=A_0(z)^{c_2(\alpha)}\cdot A_1(z)^{c_1(\alpha)^2}\cdot A_2(z)^{\chi(\mo_S)}\cdot A_3(z)^{c_1(\alpha).K_S}\cdot A_4(z)^{K_S^2}.\]
\end{thm}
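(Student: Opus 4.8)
The plan is to combine the universality principle of \cite{EGL} for tautological integrals on $S^{[n]}$ with a multiplicativity argument over disjoint unions of surfaces, and then take logarithms. First I would check that the integrand has the right shape: by definition $\alpha^{[n]}=R(p_n)_*(q_n^*\alpha\cdot\mo_{\Sigma_n})$, so Grothendieck--Riemann--Roch gives $\operatorname{ch}(\alpha^{[n]})=(p_n)_*\big(q_n^*\operatorname{ch}(\alpha)\cdot\gamma_n\big)$ with $\gamma_n$ a class on $S^{[n]}\times S$ depending only on $\mo_{\Sigma_n}$ and the relative tangent sheaf; since $\dim S=2$ we have $\operatorname{ch}(\alpha)=r(\alpha)+c_1(\alpha)+\big(\tfrac12 c_1(\alpha)^2-c_2(\alpha)\big)\in H^0\oplus H^2\oplus H^4$, so $s_{2n}(\alpha^{[n]})$ is a polynomial in tautological classes determined by $n$, $r(\alpha)$, $c_1(\alpha)$, $c_2(\alpha)$ and the Chern classes of $T_S$. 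Thus $\int_{S^{[n]}}s_{2n}(\alpha^{[n]})$ is exactly a tautological integral of the type treated in \cite{EGL}. Replacing $\alpha$ with no loss by a $\bq$-combination $\sum a_i[L_i]$ of line bundles having the same Chern character (possible in $K(S)\otimes\bq$: the rank, the algebraic class $c_1(\alpha)$, and $\operatorname{ch}_2(\alpha)$ are matched by suitable $\bq$-linear combinations of $\mo_S$ and of line bundles), the description of $S^{[n]}$ together with the bordism argument of \cite{EGL} shows that $\int_{S^{[n]}}s_{2n}(\alpha^{[n]})$ is a universal polynomial --- with coefficients depending on $n$ and $r(\alpha)$ only --- in the basic integrals $\int_S c_1(L_i).c_1(L_j)$, $\int_S c_1(L_i).c_1(S)$, $\int_S c_1(S)^2$, $\int_S c_2(S)$. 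Because the tautological class is built from $\alpha^{[n]}$ alone, the $L_i$ enter only through $\alpha$, so these collapse to $c_1(\alpha)^2$, $c_1(\alpha).K_S$, $K_S^2$, $c_2(T_S)$, together with $c_2(\alpha)$ coming from $\operatorname{ch}_2(\alpha)$. Using Noether's formula $12\chi(\mo_S)=K_S^2+c_2(T_S)$ I then trade $c_2(T_S)$ for $\chi(\mo_S)$, so every coefficient of $\ts_\alpha(z)$ is a polynomial (universal, depending only on $r(\alpha)$) in $P_1:=c_2(\alpha)$, $P_2:=c_1(\alpha)^2$, $P_3:=\chi(\mo_S)$, $P_4:=c_1(\alpha).K_S$, $P_5:=K_S^2$.

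Next I would prove the multiplicativity $\ts_{\alpha_1\sqcup\alpha_2}=\ts_{\alpha_1}\cdot\ts_{\alpha_2}$. If $(S,\alpha)=(S_1,\alpha_1)\sqcup(S_2,\alpha_2)$, then $S^{[n]}=\coprod_{a+b=n}S_1^{[a]}\times S_2^{[b]}$ and on each component $\alpha^{[n]}=\operatorname{pr}_1^*\alpha_1^{[a]}+\operatorname{pr}_2^*\alpha_2^{[b]}$ in $K$-theory, so by the Whitney formula $s_t(\alpha^{[n]})=\operatorname{pr}_1^*s_t(\alpha_1^{[a]})\cdot\operatorname{pr}_2^*s_t(\alpha_2^{[b]})$. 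Integrating the degree-$2n$ part over $S_1^{[a]}\times S_2^{[b]}$ and matching dimensions selects $s_{2a}$ on the first factor and $s_{2b}$ on the second, hence
\[\int_{S^{[n]}}s_{2n}(\alpha^{[n]})=\sum_{a+b=n}\Big(\int_{S_1^{[a]}}s_{2a}(\alpha_1^{[a]})\Big)\Big(\int_{S_2^{[b]}}s_{2b}(\alpha_2^{[b]})\Big),\]
which is the claimed identity of generating series.

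Finally I would conclude by taking logarithms. Since $\ts_\alpha(z)=1+O(z)$, the series $W_\alpha(z):=\log\ts_\alpha(z)$ lies in $z\,\bq[[z]]$ and is additive under disjoint union; meanwhile each $P_i$ is additive under disjoint union of pairs, and every coefficient of $W_\alpha$ is a universal polynomial $F(P_1,\dots,P_5)$. After verifying --- by exhibiting enough pairs, e.g. $\p^2$, $\p^1\times\p^1$, $K3$ and abelian surfaces twisted by line bundles, together with disjoint unions and $\bq$-linear combinations --- that $(P_1,\dots,P_5)$ sweeps out a Zariski-dense subset of $\bc^5$, the additivity of $W_\alpha$ becomes the polynomial identity $F(x+y)=F(x)+F(y)$ for each coefficient $F$; such an $F$ satisfies $F(0)=0$ and $F(mx)=mF(x)$ for $m\in\bz_{>0}$, hence is linear. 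Therefore $W_\alpha(z)=\sum_{i=1}^5 P_i\,a_i(z)$ with $a_i\in\bq[[z]]$ depending only on $r(\alpha)$, and exponentiating gives $\ts_\alpha(z)=\prod_{i=1}^5 A_i(z)^{P_i}$ with $A_i:=\exp(a_i)$. Renaming $A_0,\dots,A_4$ in the order $c_2(\alpha),\,c_1(\alpha)^2,\,\chi(\mo_S),\,c_1(\alpha).K_S,\,K_S^2$ gives the stated formula.

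The crux is the first step: the universality and polynomiality input from \cite{EGL}, which rests on the geometry of the Hilbert schemes $S^{[n]}$ and a genuinely non-trivial complex-bordism argument; the secondary difficulty is the realizability claim used at the end, namely that the five characteristic numbers vary independently over the class of all pairs $(S,\alpha)$. By comparison, the multiplicativity in the second step and the ``additive $\Rightarrow$ linear'' deduction are formal.
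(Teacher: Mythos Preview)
The paper does not give its own proof of this statement: it is quoted verbatim from \cite{EGL} (introduced by ``By \cite{EGL} we have''), so there is no argument in the paper to compare yours against. Your sketch is a faithful reconstruction of the method of that reference---GRR to express $s_{2n}(\alpha^{[n]})$ as a tautological integral, the cobordism/universality machinery to obtain polynomiality in the five characteristic numbers, multiplicativity under disjoint union, and the logarithm trick to force linearity---and is correct in outline.

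Two minor comments. First, the detour through a $\bq$-combination of line bundles is unnecessary: once you observe via GRR that $\operatorname{ch}(\alpha^{[n]})$ is a universal expression in $r(\alpha)$, $c_1(\alpha)$, $c_2(\alpha)$ and the tautological classes of $\Sigma_n$, the EGL universality theorem applies directly to the pair $(S,\alpha)$ without rewriting $\alpha$. Second, the realizability step---that the vector $(c_2(\alpha),c_1(\alpha)^2,\chi(\mo_S),c_1(\alpha).K_S,K_S^2)$ sweeps out a Zariski-dense set of $\bq^5$ for each fixed $r(\alpha)$---is the only place where something could go wrong, and in a self-contained write-up you would want to exhibit explicit pairs (your list suffices: e.g.\ products of curves and blow-ups of $\p^2$ already give independent $\chi(\mo_S)$ and $K_S^2$, and twisting by line bundles and adding copies of $\mo_S$ moves $c_1(\alpha)$ and $c_2(\alpha)$ freely).
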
    

By Theorem 1 and Remark 6 in \cite{MOP3}, we have for $r(\alpha)=0$ and $z=t(1+t)$
\begin{equation}\label{aaaa}A_0(z)=(1+t)^{-1},~~ A_1(z)=(1+2t)^{\frac12},~~A_2(z)=1,~~A_4(z)=1.
\end{equation}
The following conjecture is due to Marian-Oprea-Pandharipande.
\begin{con}[Conjecture 2 in \cite{MOP3}]\label{mop3}In rank 0, we have
\[A_3(z)=(1+t)^{-1}\cdot (1+2t)^{\frac12}=A_0(z)\cdot A_1(z)\text{ for }z=t(1+t).\]
\end{con}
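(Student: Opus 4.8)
The plan is to exploit the universality in Theorem~\ref{egl} to collapse Conjecture~\ref{mop3} to one explicit evaluation, and then to compute that evaluation directly. Because $A_0,\dots,A_4$ depend on $\alpha$ only through $r(\alpha)=0$, and $A_0,A_1,A_2,A_4$ are already known by \eqref{aaaa}, it suffices to choose one smooth projective surface $S$ and one class $\alpha\in K(S)$ with $r(\alpha)=0$ and $c_1(\alpha)\cdot K_S\neq 0$ whose Segre series can be computed in closed form; then
\[A_3(z)^{\,c_1(\alpha)\cdot K_S}=\ts_\alpha(z)\cdot A_0(z)^{-c_2(\alpha)}A_1(z)^{-c_1(\alpha)^2}A_2(z)^{-\chi(\mo_S)}A_4(z)^{-K_S^2},\]
and since the right-hand side is a power series in $z$ with constant term $1$, this determines $A_3(z)$ uniquely (as the unique formal power series with $A_3(0)=1$ satisfying the equation, i.e.\ a formal $|c_1(\alpha)\cdot K_S|$-th root). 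A convenient choice is $S=\p^2$ and $\alpha=[\mo_{\p^2}(1)]-[\mo_{\p^2}]$; this is represented by the honest rank-zero sheaf $\iota_*\mo_{\p^1}(1)$, where $\iota\colon\p^1\hookrightarrow\p^2$ is a line, and a direct check gives $c_2(\alpha)=0$, $c_1(\alpha)^2=1$, $\chi(\mo_S)=1$, $c_1(\alpha)\cdot K_S=-3$, $K_S^2=9$. With \eqref{aaaa} the conjecture becomes exactly the identity $\ts_\alpha(z)=(1+t)^3(1+2t)^{-1}$ for $z=t(1+t)$. (Running the argument over a family, e.g.\ $\alpha=[\mo_S(aH)]-[\mo_S]$ on $\p^2$ with $a$ varying, or $\alpha=[\mo_{kf}]$ for $f$ a ruling on a geometrically ruled surface with $k$ varying, would in addition display the exponent $c_1(\alpha)\cdot K_S$ and give an internal consistency check.)

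To evaluate $\ts_\alpha(z)$ I would first rewrite the integrand: writing $\alpha^{[n]}=L^{[n]}-\mo_S^{[n]}$ with $L=\mo_S(1)$, one has $s_t(\alpha^{[n]})=c_t(-\alpha^{[n]})=s_t(L^{[n]})\,c_t(\mo_S^{[n]})$, hence
\[\int_{S^{[n]}}s_{2n}(\alpha^{[n]})=\sum_{i+j=2n}\int_{S^{[n]}}s_i(L^{[n]})\,c_j(\mo_S^{[n]}).\]
Both ingredients are under control: the Chern character of a tautological bundle $M^{[n]}$ of a line bundle $M$ is given by a universal formula in Nakajima operators and the classes on $S$ (\cite{EGL}), and $\mo_S^{[n]}$ is the tautological bundle of the trivial line bundle, whose Chern classes live on the punctual strata. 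The point is then to sum the generating series $\sum_{n\geq 0}z^n\int_{S^{[n]}}s_{2n}(L^{[n]}-\mo_S^{[n]})$ in closed form.

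I see three possible routes for this sum. (i) The vertex-operator calculus used in the proofs of Lehn's conjecture by Marian--Oprea--Pandharipande and by Voisin: express an exponential of the relevant tautological classes through Carlsson--Okounkov-type operators on $\bigoplus_n H^*(S^{[n]})$ and compute the resulting matrix element, which after $z=t(1+t)$ should be rational in $t$. (ii) Atiyah--Bott localisation on $(\p^2)^{[n]}$, whose torus fixed points are monomial ideals: this turns $\int_{(\p^2)^{[n]}}s_{2n}(\alpha^{[n]})$ into an explicit sum over triples of partitions, and one evaluates $\sum_n z^n(\cdots)$ combinatorially. (iii) A geometric reduction: since $\alpha^{[n]}=(\iota_*\mo_{\p^1}(1))^{[n]}$ is supported on the incidence locus $\{\xi:\xi\cap C\neq\emptyset\}\subset S^{[n]}$, push the integral down, via degeneration to the normal cone of $C$, to the Hilbert schemes of points of the line $C$ and of a residual piece, where the relevant series is rational and classically known, and compare. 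Whichever route one takes, the final step is to specialise to $S=\p^2$ and $\alpha=[\mo_{\p^2}(1)]-[\mo_{\p^2}]$, solve for $A_3(z)$, and confirm $A_3(z)=(1+t)^{-1}(1+2t)^{1/2}=A_0(z)A_1(z)$ for $z=t(1+t)$.

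The main obstacle is the effective summation in the previous step. Theorem~\ref{egl} already gives the \emph{shape} of $\ts_\alpha(z)$, and the $K3$, abelian and zero-$c_1$ specialisations used in \cite{MOP3} pin down everything \emph{except} $A_3(z)$; so all the content lies in isolating the single factor $A_3(z)^{c_1(\alpha)\cdot K_S}$, i.e.\ in tracking the $K_S$-dependence of a rank-zero Segre integral. Concretely this means keeping the mixed products $s_i(L^{[n]})c_j(\mo_S^{[n]})$ under control uniformly in $n$ and summing them exactly — the bookkeeping in the vertex-operator matrix element (or the partition sum, or the curve-Hilbert-scheme reduction) is where the real work is.
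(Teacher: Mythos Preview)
Your reduction step is sound and is essentially the same as the paper's: fix a surface and a rank-zero $\alpha$ with $c_1(\alpha)\cdot K_S\neq 0$, compute $\ts_\alpha(z)$, and read off $A_3$. Your observation that the $|c_1(\alpha)\cdot K_S|$-th root is uniquely determined among power series with constant term $1$ is correct and in fact cleaner than the paper, which instead uses a second surface (the blow-up of $\p^2$) to kill a possible $9$th root of unity.

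The gap is that you have not carried out the computation of $\ts_\alpha(z)$, and this is the entire content of the result. You propose three routes (vertex operators, localisation, degeneration to the normal cone of the line), acknowledge that ``the bookkeeping \dots\ is where the real work is,'' and stop there. None of these is a short calculation: the mixed sums $\sum_{i+j=2n}\int s_i(L^{[n]})c_j(\mo_S^{[n]})$ do not collapse in any obvious way, and localisation on $(\p^2)^{[n]}$ produces triple-partition sums whose closed-form evaluation in $t$ is not known by elementary means. The paper's proof occupies Sections~2--4 and is a long recursive computation on $\p^2$; so a proposal that defers precisely this step is not yet a proof.

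It is also worth noting that the paper's choice of $\alpha$ is more strategic than yours. Taking $\alpha=[\mo_C]$ with $C\in|-K_S|$ on $\p^2$ gives $c_2(\alpha)=c_1(\alpha)^2=-c_1(\alpha).K_S=9$, so by \eqref{aaaa} the target is simply $\ts_\alpha(z)=1$, i.e.\ $\int_{S^{[n]}}s_{2n}(\alpha^{[n]})=0$ for all $n\geq 1$. The recursion set up in \S2--\S3 (decomposing the transfer map $f=\phi_*\psi^*$ as $\delta_0+\delta_1+\delta_2+\delta_3$) then shows that the relevant combinations land in $(d-3)A^*(S^{[n-k]})$ for a degree-$d$ curve, and the anticanonical choice $d=3$ makes everything vanish. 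Your choice $\alpha=[\mo_{\p^2}(1)]-[\mo_{\p^2}]$ corresponds to $d=1$, where this cancellation mechanism is absent and the target series $(1+t)^3(1+2t)^{-1}$ is genuinely nontrivial; there is no indication that any of your three routes would hit it more easily. If you want to pursue your strategy, the natural move is to switch to $\alpha=[\mo_C]$ with $C$ anticanonical and try to prove the vanishing directly --- which is exactly what the paper does.
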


%%%%%%%%%%%%%%.          Subsection 1.2.     %%%%%%%%%%%%%%%%%%%%
\subsection{Results.}
Our main result is as follows.
\begin{thm}\label{intro1}Let $S=\p^2_{\bc}$, let $C\in |-K_S|$ be a curve of class $-K_S$.  Define $\alpha\in K(S)$ to be the class of the structure sheaf $\mo_{C}$ of $C$.  Then for all $n\geq 1$ 
$$\int_{S^{[n]}}s_{2n}(\alpha^{[n]})=0.$$ 
\end{thm}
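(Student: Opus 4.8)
First I would carry out a reduction to explicit tautological bundles. Since $C\in|-K_S|$ we have $\mathcal{O}_S(-C)\cong\omega_S$, so the structure sequence of $C$ reads $0\to\omega_S\to\mathcal{O}_S\xrightarrow{f}\mathcal{O}_C\to 0$. The universal subscheme $\Sigma_n$ is irreducible, Cohen--Macaulay, and dominates $S$, so the cubic form $f$ restricts to a non-zero-divisor on $\Sigma_n$; applying $(-)^{[n]}$ to the sequence is therefore exact and gives a short exact sequence of sheaves on $S^{[n]}$,
\[0\longrightarrow\omega_S^{[n]}\xrightarrow{\ f\ }\mathcal{O}_S^{[n]}\longrightarrow\mathcal{O}_C^{[n]}\longrightarrow 0,\qquad \mathcal{O}_C^{[n]}=(p_n)_*\mathcal{O}_{\Sigma_n\cap(S^{[n]}\times C)},\]
with $\mathcal{O}_C^{[n]}$ a torsion sheaf supported on the divisor $Z=\{\xi:\xi\cap C\neq\varnothing\}$, generically of rank $1$ there. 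Hence $\alpha^{[n]}=[\mathcal{O}_S^{[n]}]-[\omega_S^{[n]}]$ and $s_{2n}(\alpha^{[n]})=\{c_t(\omega_S^{[n]})/c_t(\mathcal{O}_S^{[n]})\}_{2n}$, the top-degree component of a quotient of Chern classes of two explicit rank-$n$ tautological bundles; the assertion is exactly $\ts_\alpha(z)=1$. Note that this already proves Conjecture \ref{mop3}: with Theorem \ref{egl} and \eqref{aaaa} and the invariants $c_2(\alpha)=c_1(\alpha)^2=K_S^2=9$, $\chi(\mathcal{O}_S)=1$, $c_1(\alpha).K_S=-9$, one gets $A_3(z)^{9}=(1+t)^{-9}(1+2t)^{9/2}$, so $A_3(z)=(1+t)^{-1}(1+2t)^{1/2}$ (the unique ninth root with $A_3(0)=1$).

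For the vanishing itself I would compute $\int_{S^{[n]}}s_{2n}(\alpha^{[n]})$ by torus localization. Since $[\mathcal{O}_C]=[\mathcal{O}_S]-[\mathcal{O}_S(-3)]$ in $K(S)$ does not depend on the cubic, I may take $C=\{x_0x_1x_2=0\}$, which is invariant under the $2$-torus $T\subset\mathrm{PGL}_3$. The $T$-fixed points of $(\p^2)^{[n]}$ are triples of partitions $(\lambda^{(0)},\lambda^{(1)},\lambda^{(2)})$ with $|\lambda^{(0)}|+|\lambda^{(1)}|+|\lambda^{(2)}|=n$; at such a point the tangent space and the bundles $\mathcal{O}_S^{[n]},\omega_S^{[n]}$ all split as sums over the three coordinate points of $\p^2$, and (writing $\kappa_i=-(t_1^{(i)}+t_2^{(i)})$ for the weight of $\omega_S$ at the $i$-th of them) one has $\alpha^{[n]}|_\xi=\sum_i(1-\kappa_i)\sum_{\square\in\lambda^{(i)}}\mathrm{wt}(\square)$ as a virtual $T$-representation. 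The Bott residue formula then turns $\ts_\alpha(z)$ into a sum over triples of partitions of a product of three local factors, each depending only on $\lambda^{(i)}$ and the two tangent weights at the $i$-th fixed point. The crucial role of genus $1$ is that $\mathrm{ch}(\mathcal{O}_C)\,\mathrm{td}(T_{\p^2})=[C]$, with no higher term (equivalently $2g-2=0$): this is what makes each local factor a "reciprocity" expression and should force the triple product --- hence every positive coefficient of $\ts_\alpha(z)$ --- to cancel.

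The hard part is this last cancellation. I would try to pin it down with a functional equation for $\ts_\alpha(z)$ coming from Grothendieck--Serre duality for the finite flat morphism $\pi\colon\Sigma_n\to S^{[n]}$, which identifies $(\alpha^{[n]})^{\vee}$ with a twist of $(p_n)_*\omega_{\Sigma_n\cap(S^{[n]}\times C)}$ by $\omega_{S^{[n]}}^{-1}$ (and a shift); combined with $\omega_C\cong\mathcal{O}_C$, which lets one relate this relative dualizing sheaf back to $\mathcal{O}_{\Sigma_n\cap(S^{[n]}\times C)}$, one should obtain $c_{-t}(\alpha^{[n]})=c_t(\text{a twist of }\alpha^{[n]})$, constraining the Segre series enough that Theorem \ref{egl} finishes the computation. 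A genuine technical wrinkle is that $\Sigma_n$ is not Gorenstein for $n\geq 3$, so $\omega_\pi$ is a non-locally-free coherent $\mathcal{O}_{\Sigma_n}$-module; I would handle this by working with the relative dualizing complex $\pi^{!}\mathcal{O}_{S^{[n]}}=(\mathcal{O}_S^{[n]})^{\vee}$, or --- probably more robustly --- replace the per-$n$ localization by a recursion in $n$ built from the nested Hilbert scheme $S^{[n,n+1]}$, whose correction terms carry a factor $\deg\omega_C=0$ and hence vanish; the base cases $n=0$ and $n=1$ are immediate ($\mathcal{O}_{\p^2}^{[1]}=\mathcal{O}_{\p^2}$ has vanishing Chern classes, so $s_2(\alpha^{[1]})=0$).
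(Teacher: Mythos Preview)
Your reduction $\alpha^{[n]}=[\mathcal{O}_S^{[n]}]-[\omega_S^{[n]}]$ and the deduction of Conjecture~\ref{mop3} from the vanishing are fine (the paper argues the latter slightly differently, via $\p^2$ and its one-point blow-up, to avoid a root-of-unity ambiguity). But the proposal is not a proof: you list three possible routes---torus localization, Grothendieck--Serre duality, and recursion through $S^{[n,n+1]}$---and carry none of them to a conclusion. In the localization paragraph the phrase ``should force the triple product \dots\ to cancel'' is exactly the whole theorem; nothing in your text explains \emph{why} the reciprocity occurs. In the duality paragraph you yourself flag the obstruction ($\Sigma_n$ is not Gorenstein) and then retreat to the third route without executing it.

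The paper does use your third route, but the slogan ``correction terms carry a factor $\deg\omega_C=0$ and hence vanish'' drastically understates what is required. Concretely, the paper sets $f=(\phi_n)_*\psi_n^*$ on Chow groups and observes $f^{n}(\rs_{2n}(n))=n!\,\rs_{2n}(n)$; it then decomposes $f=\delta_0+\delta_1+\delta_2+\delta_3$ according to how many factors of $\theta^1$ are traded for $\theta^0$'s in the K\"unneth expansion of $s_\bullet(\mathcal{I}_n)$. The only surviving compositions $\delta_{i_1}\cdots\delta_{i_n}$ are those with $\sum i_l=n$, and the heart of the argument (Theorem~\ref{main}) is that for \emph{general} degree $d$ the sum $\sum_{\sum i_l=k}\delta_{i_1}\cdots\delta_{i_k}(\rs_m(n))$ lies in $(d-3)A^*(S^{[n-k]})$; setting $d=3$ gives the vanishing. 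Proving this divisibility is not a one-line consequence of $2g_C-2=0$: it goes through an explicit closed formula for $(\delta_1)^k\delta_0(\rs_m(n))$, a nontrivial combinatorial identity (Lemma~\ref{combi2}) that collapses a double product into a single one, the key cancellation
\[
\Bigl(\delta_2(\delta_1)^k\delta_0+\sum_{s=0}^{k-1}\delta_3(\delta_1)^{k-s-1}\delta_0(\delta_1)^{s}\delta_0\Bigr)(\rs_m(n))\in (d-3)A^*,
\]
and then a commutator calculus (with operators $Ad_{\delta_i}$ acting on $\delta_2,\delta_3$) that propagates this special case to the full sum. None of this structure is visible in, or suggested by, your sketch; in particular the correction terms do \emph{not} individually carry a factor of $d-3$---only specific combinations of them do, and isolating those combinations is the substance of the proof.
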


We get the following corollary which proves Conjecture \ref{mop3}.

\begin{coro}\label{intro2}Let $S$ be a projective surface and $\alpha\in K(S)$ with $r(\alpha)=0$.  Then we have  
\[\ts_{\alpha}(z)=(1+t)^{-c_2(\alpha)-c_1(\alpha).K_S}\cdot (1+2t)^{\frac{c_1(\alpha)^2+c_1(\alpha).K_S}2}.\]
where $z=t(1+t)$.

In particular, if $c_1(\alpha)=[C]$ for some smooth curve $C\subset S$, then
\[\ts_{\alpha}(z)=\left(\frac{1+2t}{1+t}\right)^{g_C-1}\cdot(1+t)^{\chi(\alpha)}.\]
\end{coro}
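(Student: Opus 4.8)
The plan is to treat Corollary \ref{intro2} as a purely formal consequence of Theorem \ref{intro1}: the substantive geometric input is the vanishing in Theorem \ref{intro1}, and everything below is bookkeeping with the universal series of Theorem \ref{egl} together with a uniqueness-of-root argument. The overall strategy has three stages: (i) use Theorem \ref{intro1} to pin down the last unknown universal series $A_3$, thereby proving Conjecture \ref{mop3}; (ii) substitute all five rank-$0$ series into the factorization of Theorem \ref{egl} and collect exponents to obtain the first displayed formula; (iii) rewrite that formula in terms of the genus $g_C$ and $\chi(\alpha)$ using adjunction and Hirzebruch--Riemann--Roch.

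For stage (i), I would apply Theorem \ref{intro1} to the class $\alpha=[\mo_C]$ on $S=\p^2$ with $C\in|-K_S|$ smooth. From the resolution $0\to\mo_S(K_S)\to\mo_S\to\mo_C\to 0$ one has $\mathrm{ch}(\alpha)=1-e^{K_S}=-K_S-\tfrac12 K_S^2$, whence $r(\alpha)=0$, $c_1(\alpha)=-K_S$, $c_1(\alpha)^2=K_S^2$, $c_1(\alpha).K_S=-K_S^2$, and $c_2(\alpha)=K_S^2$ (with $K_S^2=9$). Substituting these Chern numbers together with (\ref{aaaa}) into Theorem \ref{egl}, and using $\ts_\alpha(z)=1$ from Theorem \ref{intro1}, gives the identity $1=(1+t)^{-K_S^2}(1+2t)^{K_S^2/2}\,A_3^{-K_S^2}$, i.e. $A_3^{\,9}=\big[(1+t)^{-1}(1+2t)^{1/2}\big]^{9}$. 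Since $K_S^2=9\neq 0$ and both $A_3$ and $(1+t)^{-1}(1+2t)^{1/2}$ are power series in $z=t(1+t)$ with constant term $1$, uniqueness of ninth roots among such series forces $A_3(z)=(1+t)^{-1}(1+2t)^{1/2}$, which is exactly Conjecture \ref{mop3}.

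For stage (ii), I would substitute $A_0=(1+t)^{-1}$, $A_1=(1+2t)^{1/2}$, $A_2=A_4=1$ and the value of $A_3$ just obtained into Theorem \ref{egl}, now for an arbitrary projective surface $S$ and an arbitrary rank-$0$ class $\alpha$ (legitimate because $A_0,\dots,A_4$ are universal, independent of $S$). Collecting the two remaining bases gives exponent $-c_2(\alpha)-c_1(\alpha).K_S$ on $(1+t)$ and exponent $\tfrac12\big(c_1(\alpha)^2+c_1(\alpha).K_S\big)$ on $(1+2t)$, which is precisely the first displayed formula.

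For stage (iii), assume $c_1(\alpha)=[C]$ with $C$ a smooth curve. Adjunction gives $2g_C-2=C^2+C.K_S=c_1(\alpha)^2+c_1(\alpha).K_S$, so the $(1+2t)$-exponent equals $g_C-1$. Hirzebruch--Riemann--Roch on $S$, with $\mathrm{td}(S)=1-\tfrac12 K_S+\chi(\mo_S)$ (Noether's formula) and $\mathrm{ch}(\alpha)=c_1(\alpha)+\tfrac12 c_1(\alpha)^2-c_2(\alpha)$, yields $\chi(\alpha)=\tfrac12 c_1(\alpha)^2-c_2(\alpha)-\tfrac12 c_1(\alpha).K_S$. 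A direct comparison shows $-c_2(\alpha)-c_1(\alpha).K_S=\chi(\alpha)-(g_C-1)$, so the $(1+t)$-exponent is $\chi(\alpha)-(g_C-1)$; combining the two bases gives $\ts_\alpha(z)=\big(\tfrac{1+2t}{1+t}\big)^{g_C-1}(1+t)^{\chi(\alpha)}$, as claimed. There is no serious obstacle at the level of the corollary---all the geometry sits in Theorem \ref{intro1}. The only point demanding care is the extraction of $A_3$ from a single example in stage (i): this works only because the $A_3$-exponent $c_1(\alpha).K_S=-K_S^2$ is nonzero for this $\alpha$, and because normalizing to constant term $1$ makes the relevant formal root unique.
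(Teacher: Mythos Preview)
Your proof is correct and, at the level of the corollary, proceeds essentially as the paper does: reduce to identifying $A_3$, then substitute into Theorem~\ref{egl}. The one genuine difference is in how you extract $A_3$ from the relation $A_3^{\,9}=(A_0A_1)^9$. You invoke uniqueness of ninth roots among formal power series with constant term $1$; the paper instead plugs in a second surface, the blow-up $\widehat{\p^2}$, obtains $\ts_\alpha(z)=A_0^8A_1^8A_3^{-8}=\mu^{-8}$ for the root of unity $\mu=A_3/(A_0A_1)$, and then reads off $\mu^{-8}=1$ from the $n=0$ term, whence $\mu=\mu^9\mu^{-8}=1$. Your route is shorter and avoids the auxiliary surface, but it does rely on knowing in advance that $A_3(0)=1$ (the standard normalization implicit in Theorem~\ref{egl}); the paper's argument is slightly more self-contained in that it effectively recovers this normalization from the second example. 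Your stages (ii) and (iii) are routine expansions that the paper omits, and they are carried out correctly.
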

\begin{proof}By (\ref{aaaa}) we only need to show that $A_3(z)=A_0(z)\cdot A_1(z)$ for rank zero. 

Let $S=\p^2$.  Let $\alpha$ be the class of the structure sheaf of a curve in $|-K_S|$.  Then $-c_1(\alpha).K_S=c_1(\alpha)^2=c_2(\alpha)=9$.  By Theorem \ref{egl} and Theorem \ref{intro1} we have
\[\ts_{\alpha}(z)=A_0(z)^{9}\cdot A_1(z)^{9}\cdot A_3(z)^{-9}=1.\]
Therefore $A_3(z)=\mu A_0(z)A_1(z)$ for some $\mu\in\bc$ such that $\mu^9=1$. 

Let $S=\widehat{\p^2}$ be the blow-up of $\p^2$ at a point.  Let $\alpha$ be the class of the structure sheaf of a curve in $|-K_S|$.  Then $-c_1(\alpha).K_S=c_1(\alpha)^2=c_2(\alpha)=8$.  By Theorem \ref{egl} we have
\[\ts_{\alpha}(z)=A_0(z)^{8}\cdot A_1(z)^{8}\cdot A_3(z)^{-8}=\mu^{-8}.\]
Since $\int_{S^{[0]}}s_{0}(\alpha^{[0]})=1$, we have $\mu^{-8}=1\Rightarrow \mu=\mu^9\cdot\mu^{-8}=1$.
Therefore the corollary is proved.
\end{proof}

%%%%%%%%%%%%%%%%%%%.           Subsection 1.3.                 %%%%%%%%%%%%%%%%%%%
\subsection{Notations \& Conventions.}\label{NC}
\begin{itemize}
\item For any projective smooth scheme $X$, denote by $A^*(X)=\oplus_{r\geq 0}A^r(X)$ the Chow ring of $X$ and denote by $K(X)$ the Grothendieck group of coherent sheaves over $X$

\item
Except otherwise stated, we always let $S=\p^2_{\bc}$ with $K_S$ the canonical divisor class and $H$ the hyperplane class.  Hence $K_S=-3H$.  

\item By abuse of notations, we use the same letter for any closed subvariety of $S$ and its class in $A^*(S)$.  We have $A^*(S)\cong \bz \mathbf{1}\oplus \bz H\oplus \bz x$ with $x\in \p^2$.  

\item Let $S^{[n]}$  be the Hilbert scheme of $n$ points on $S$.  It is well-known that $A^*(S^{[n]})\cong H^*(S^{[n]},\bz)$ as rings (see e.g. \cite{mark}).   

\item We fix an isomorphism $A^{2n}(S^{[n]})\xrightarrow{\cong}\bz$ such that the image of every class $\theta\in A^{2n}(S^{[n]})$ which we also denote by $\theta$ is equal to $\int_{S^{[n]}}\theta$. 

%\item By abuse of notations, we use the same letter for any closed subvariety of $S$ and its class in $A^*(S)$, and for any coherent sheaf on $S$ and its class in $K(S)$.

%\item Let $C$ be a curve.  Denote by $\omega_C$ and $g_C$ the canonical divisor and the genus of $C$ respectively. 
% and the corresponding divisor classes, for instance, $L_1\otimes L_2$ is the tensor of two line bundles, and $L_1.L_2$ is the intersection number of their divisor classes.  Define $L_1^2=L_1.L_1$.

\item For any sheaf $\mf$ and $i\geq 1$, let $c_i(\mf)$ ($s_i(\mf)$, resp.) be its $i$-th Chern (Segre, resp.) class.  We also define $c_0(\mf)=1=s_0(\mf)$.

\item For two (classes of) sheaves $\mf,\mg\in K(X)$ with $X$ projective smooth, we define their flat tensor 
\begin{equation}\label{flten}\mf\cdot\mg:=\sum_{i\geq0}(-1)^iTor^i(\mf,\mg)\in K(X).\end{equation}
The multiplication $\cdot$ in (\ref{flten}) defines a commutative ring structure on $K(X)$.

\item For any map $f:X\ra Y$ with $X,Y$ smooth projective, let $f^!:K(Y)\ra K(X)$ be the flat pull-back, i.e. $f^!\mf:=f^*\mf$ for any $\mf$ locally free and $f^!$ is a ring homomorphism.  Moreover for any $\mg\in K(Y)$, we have
$$c_i(f^!(\mg))=f^*c_i(\mg),~s_i(f^!(\mg))=f^*s_i(\mg).$$
If $f$ is flat, then $f^!=f^*$.

\item For a multiplication $a_1\cdots a_m$, we let
\[a_1\cdots \widehat{a_k}\cdots a_m:=a_1\cdots a_{k-1}a_{k+1}\cdots a_m.\] 

\item We make the convention that for $k=-1$, $\prod_{j=0}^k P_j=1$ and $\sum_{j=0}^k P_j=0$ with $P_j$ any numbers, classes, or maps etc.
\end{itemize}

The strategy to compute the Serge integral $\int_{S^{[n]}}s_{2n}(\alpha^{[n]})=0$ is essentially to do the induction on $n$, but the whole computation is technic and complicated.

The structure of the paper is arranged as follows.  In \S 2 we review the induction strategy in \cite{EGL} and give some preliminary results.  In \S 3 we prove more recursion relations and at the end of \S 3 we pose Theorem \ref{main} which proves Theorem \ref{intro1}.  In \S 4 we prove Theorem \ref{main}.  \S 4.1 and \S 4.2 are still devoted to some technic propositions and our final proof is in \S 4.3.  

\subsection{Acknowledgements.} %I would like to thank Lin Yinbang for drawing my attention to Johnson's paper \cite{Joh} which leads me to the Marian-Oprea-Pandharipande conjecture on rank zero Segre class.  I thank Miaofen Chen for her encouragement.
I thank the referees in advance for their attention to read the paper.

%%%%%%%%%%%%%%%%%%%      Section 2   %%%%%%%%%%%%%%%%%
\section{The geometric set-up on Hilbert schemes of points.}
We review the set-up in \cite{EGL} (which in fact applies to $S$ being any projective complex surface).  %Let $S$ be any projective complex surface.  
Let $\mi_n$ be the ideal sheaf of the universal family $\Sigma_n\subset S^{[n]}\times S$.

Define $\p(\mi_n):=Proj(Sym^*(\mi_n))$.  Then $\p(\mi_n)$ is isomorphic to the incidence variety $S^{[n,n+1]}$ parametrizing all pairs $(\xi,\xi')\in S^{[n]}\times S^{[n+1]}$ satisfying $\xi\subset\xi'$.  We have two classifying morphisms $\psi_n:\p(\mi_n)\ra S^{[n+1]},(\xi,\xi')\mapsto \xi'$ and $\phi_n:\p(\mi_n)\ra S^{[n]},(\xi,\xi')\mapsto \xi$.

It is known that $\p(\mi_n)$ is irreducible and smooth (see e.g. \cite{Che},\cite{Tik},\cite{ES}).

We have the projection $\sigma_n=(\phi_n,\rho_n):\p(\mi_n)\ra S^{[n]}\times S$ with tautological quotient line bundle $\mo_{\sigma_n}(1)=:\cl_n$.  Then $\pi_{*}\cl_n\cong \mi_n$.  
We have the following commutative diagram
\begin{equation}\label{pcd}\xymatrix@C=1.2cm{\p(\mi_n)\cong S^{[n,n+1]}\ar[d]_{\psi_n\qquad}\ar[r]^{\qquad\sigma_n}\ar[rd]^{\phi_n}\ar@/^2pc/[rr]^{\rho_n}&S^{[n]}\times S\ar[d]^{p_n}\ar[r]^{\quad q_n}&S\\ S^{[n+1]}& S^{[n]}&}
\end{equation}

Denote by $\ell_n:=c_1(\cl_n)$.  By Lemma 1.1 in \cite{EGL} we have 
\begin{equation}\label{psl}(\sigma_n)_*((-\ell_n)^i)=c_i(\mo_{\Sigma_n})=s_i(\mi_n).\end{equation}

\begin{lemma}[Lemma 2.1 in \cite{EGL}]\label{ind}The following relation holds in $K(\p(\mi_n))$
\[\psi_n^!\alpha^{[n+1]}=\phi_n^!\alpha^{[n]}+\cl_n\cdot\rho_n^!\alpha\]
for any $\alpha\in K(S)$.
\end{lemma}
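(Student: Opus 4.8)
The plan is to prove Lemma~\ref{ind} by a direct comparison of $K$-theory classes on $\p(\mi_n)\cong S^{[n,n+1]}$, using the universal exact sequence that relates $\mi_{n+1}$, $\mi_n$ and the tautological line bundle $\cl_n$ along the $\p(\mi_n)$-construction. First I would recall the canonical surjection $\sigma_n^*\mi_n\twoheadrightarrow\cl_n$ coming from the $\Proj(\Sym^*\mi_n)$ structure, and identify its kernel; dually, $\pi_*\cl_n\cong\mi_n$ on the nose. The key geometric input is that, over the incidence variety, the ideal sheaf $\psi_n^*\mi_{n+1}$ fits into a short exact sequence with $\phi_n^*\mi_n$ and (a twist of) $\cl_n$; concretely, pulling back $\mo_{\Sigma_{n+1}}$ along $\psi_n$ and $\mo_{\Sigma_n}$ along $\phi_n$, one gets a relation of the form $0\to\cl_n\cdot\rho_n^*\mo_{\{x\}}\to\psi_n^*\mo_{\Sigma_{n+1}}\to\phi_n^*\mo_{\Sigma_n}\to 0$ (this is Lemma~2.1 in \cite{EGL}, whose proof I would follow), where the left term records the "extra point" being added to pass from $\xi$ to $\xi'$.

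Next I would translate this into the statement about $\alpha^{[n]}$. Recall $\alpha^{[n]}=R(p_n)_*(q_n^*\alpha\cdot\mo_{\Sigma_n})$. Applying $q_n^*\alpha\cdot(-)$ to the exact sequence above and using flatness/base-change to commute the derived pushforwards $R(p_{n})_*$ and $R(p_{n+1})_*$ with the flat pullbacks $\phi_n^!$, $\psi_n^!$ along the diagram~(\ref{pcd}), the middle term contributes $\psi_n^!\alpha^{[n+1]}$, the quotient term contributes $\phi_n^!\alpha^{[n]}$, and the sub-term — which is supported on the section cut out by $\cl_n$ and pushes forward $q_n^*\alpha$ at the added point — contributes exactly $\cl_n\cdot\rho_n^!\alpha$. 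Here one uses that $\cd\cdot(-)$ is additive on short exact sequences in $K$-theory and that $\rho_n=q_n\circ\sigma_n$, so $\rho_n^!\alpha=\sigma_n^!(q_n^!\alpha)$, matching the twist by the tautological bundle $\cl_n=\mo_{\sigma_n}(1)$.

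The main obstacle is the careful bookkeeping of the base-change and flatness hypotheses: one must check that the relevant squares in~(\ref{pcd}) are Tor-independent so that $R(\psi_n)_*$, $R(\phi_n)_*$ genuinely commute with the pushforwards to $S^{[n]}$, $S^{[n+1]}$, and that the flat tensor product $q_n^*\alpha\cdot\mo_{\Sigma_n}$ behaves correctly when restricted along the incidence correspondence (the universal family $\Sigma_n$ is not flat over $S^{[n]}$ in general, which is precisely why one works with the derived/$K$-theoretic classes rather than honest sheaves). Since this is Lemma~2.1 of \cite{EGL} and we are entitled to quote it, in the paper I would simply cite it; were I to reprove it, the only genuinely delicate point is verifying these Tor-independence statements, after which the identity is a formal consequence of additivity in $K(\p(\mi_n))$ and the projection formula.
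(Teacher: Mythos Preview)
Your proposal is essentially correct and aligned with the paper: the paper does not prove this lemma at all but simply cites it as Lemma~2.1 of \cite{EGL}, and you explicitly note that this is what you would do as well. The sketch you give of the underlying argument is the right one and follows \cite{EGL}.

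A few small inaccuracies in your outline, in case you ever need to write it out: the exact sequence you describe lives on $\p(\mi_n)\times S$, not on $\p(\mi_n)$ itself, so the maps are $(\psi_n^S)^*$, $(\phi_n^S)^*$, $(\rho_n^S)^*$ rather than $\psi_n^*$, $\phi_n^*$, $\rho_n^*$; the left-hand term should be $p^*\cl_n\otimes(\rho_n^S)^*\mo_\Delta$ (the diagonal in $S\times S$), not ``$\cl_n\cdot\rho_n^*\mo_{\{x\}}$''; and the pullbacks $\phi_n^!$, $\psi_n^!$ are not along flat maps, so the base-change step needs the argument of Lemma~\ref{flat}(2) in the present paper (generic flatness plus torsion-freeness) rather than flat base change. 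None of this affects the validity of your plan.
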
 

By Lemma \ref{ind} we have
\begin{equation}\label{indse}s_t(\phi_n^!\mf^{[n]})=s_t(\psi_n^!\mf^{[n+1]})c_t(\cl_n\cdot\rho_n^!\mf),\end{equation}
\begin{equation}c_t(\phi_n^!\mf^{[n]})=c_t(\psi_n^!\mf^{[n+1]})s_t(\cl_n\cdot\rho_n^!\mf).\end{equation}

We have a commutative diagram
\begin{equation}\label{pcd2}\xymatrix@C=2cm{\p(\mi_n)&\p(\mi_n)\times S\ar[l]_{p}\ar[d]_{\psi_n^S:=\psi_n\times id_S}\ar[r]^{\sigma_n^S:=\sigma_n\times id_S}\ar[rd]^{\phi_n^S:=\phi_n\times id_S}\ar@/^2pc/[rr]^{\rho_n^S:=\rho_n\times id_S}&S^{[n]}\times S\times S\ar[d]^{p_n^S:=p_n\times id_S}\ar[r]^{\quad q_n^S:=q_n\times id_S}&S\times S\\ &S^{[n+1]}\times S& S^{[n]}\times S&}
\end{equation}

Denote by $\Delta\subset S\times S$ the diagonal.  By \cite{EGL} we have
\begin{equation}\label{exid}0\ra (\psi^S_n)^*\mi_{n+1}\ra(\phi_n^S)^*\mi_n\ra p^*\cl_n\otimes (\rho_n^S)^*\mo_{\Delta}\ra 0.\end{equation}

\begin{lemma}\label{flat}(1) The map $\rho_n^S$ is flat.

(2) $(\psi^S_n)^*\mi_{n+1}\cong (\psi^S_n)^!\mi_{n+1}$ and $(\phi_n^S)^*\mi_n\cong (\phi_n^S)^!\mi_n$.
\end{lemma}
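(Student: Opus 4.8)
The statement to prove is Lemma~\ref{flat}: that $\rho_n^S$ is flat, and that the pullbacks $(\psi^S_n)^*\mi_{n+1}$ and $(\phi_n^S)^*\mi_n$ agree with the corresponding $K$-theoretic flat pullbacks $(\psi^S_n)^!$ and $(\phi_n^S)^!$. The two parts are closely linked: once (1) is known for $\rho_n^S$, it is essentially known for $\sigma_n^S$ as well, and the agreement of $f^*$ with $f^!$ on the ideal sheaves $\mi_n$, $\mi_{n+1}$ is a question about the vanishing of higher $\Tor$-sheaves of $\mi_\bullet$ against the structure sheaves of the fibres of the relevant maps.

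For part (1), the plan is to exhibit $\rho_n^S = \rho_n \times \mathrm{id}_S$ as a composition (or base change) of maps each of which is visibly flat. The key point is that $\rho_n \colon \p(\mi_n) \to S$ is itself flat: indeed $\p(\mi_n) = S^{[n,n+1]}$ is smooth and irreducible of the expected dimension $2n+2$, while $S = \p^2$ is smooth of dimension $2$, and $\rho_n$ sends a pair $(\xi,\xi')$ to the residual point $x$ of $\xi'$ over $\xi$; the fibre over a point $x \in S$ is isomorphic to (a bundle over) $S^{[n]}$ (one chooses $\xi \in S^{[n]}$ and then a quotient of $\mi_n \otimes k(\xi,x)$, i.e. a point of the projectivization of the fibre of $\mi_n$), which has constant dimension $2n$. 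A smooth (hence Cohen--Macaulay) source mapping to a smooth (hence regular) target with all fibres of the same dimension is flat, by the miracle flatness / ``local criterion via equidimensionality'' theorem (e.g. Matsumura, or Hartshorne III.10.2 combined with the Cohen--Macaulay criterion EGA IV 6.1.5). Flatness is preserved under the base change $- \times S$, so $\rho_n^S$ is flat. Alternatively, and perhaps more cleanly, one can argue directly that $\sigma_n \colon \p(\mi_n) \to S^{[n]} \times S$ is the blow-up / a $\Proj$ of a sheaf, hence projective, and its composition with the flat map $q_n$ followed by a diagonal-type twist gives $\rho_n$; but the equidimensionality argument above is the shortest route.

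For part (2), what must be checked is that $\Tor^{>0}_{\mo}\bigl(\mo_{\p(\mi_n)\times S},\ (\phi_n^S)^{-1}\mi_n\bigr)$ vanishes, equivalently that $\mi_n$ is ``Tor-independent'' from $\mo_{\p(\mi_n)\times S}$ over $S^{[n]}\times S \times S$ along the map $\phi_n^S$, and similarly for $\psi_n^S$ and $\mi_{n+1}$. The cleanest way is to use local freeness of $\mi_n$ in codimension $\le 1$ together with the fact that $\mi_n$, as an ideal sheaf on the smooth $(2n+2)$-fold $S^{[n]}\times S$ cutting out the smooth universal subscheme $\Sigma_n$, has a short resolution: $0 \to \mi_n \to \mo \to \mo_{\Sigma_n}\to 0$, and $\mo_{\Sigma_n}$ is the structure sheaf of a smooth (local complete intersection) subvariety, so it is resolved by a Koszul-type complex of locally free sheaves. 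Then $(\phi_n^S)^*\mi_n \cong (\phi_n^S)^!\mi_n$ reduces to showing $(\phi_n^S)^*\mo_{\Sigma_n}$ has no higher derived pullback, i.e. that $\phi_n^S$ is Tor-independent of $\Sigma_n$; but $\Sigma_n \subset S^{[n]}\times S$ sits in the second-and-third factors away from where $\phi_n^S$ collapses things, and a dimension count on the fibre product $\p(\mi_n)\times S \times_{S^{[n]}\times S\times S}\Sigma_n$ shows it has the expected dimension, which for a regular embedding is equivalent to the $\Tor$-vanishing. The case of $\psi_n^S$ and $\mi_{n+1}$ is handled identically, using smoothness of $\Sigma_{n+1}$ and the exact sequence (\ref{exid}) if convenient.

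The main obstacle I expect is the bookkeeping in the Tor-independence / dimension-count step of part (2): one must correctly identify the fibre product scheme, verify it is Cohen--Macaulay (automatic if its expected dimension is attained and one factor is a regular embedding in a smooth ambient space), and confirm that no embedded or excess components appear. Part (1) is routine given that $\p(\mi_n) = S^{[n,n+1]}$ is known to be smooth and irreducible of dimension $2n+2$ (cited from \cite{Che},\cite{Tik},\cite{ES}). In fact, once (1) gives flatness of $\rho_n^S$, a convenient shortcut for (2) is that $(\phi_n^S)^*\mi_n$ is the restriction of a sheaf flat over a base to which $\phi_n^S$ maps nicely; but I would present the resolution-plus-dimension-count argument as the robust version, since it makes the local-complete-intersection structure of $\Sigma_n$ explicit and is exactly what is needed downstream when computing Segre classes via (\ref{indse}).
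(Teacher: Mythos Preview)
Your approach to both parts is correct, but differs from the paper's in instructive ways.

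For part (1), the paper exploits that $S=\p^2$ is homogeneous: generic smoothness gives flatness of $\rho_n$ over a nonempty open $V\subset S$, and since $\Aut(\p^2)$ acts transitively on $S$ (and compatibly on $\p(\mi_n)$), flatness propagates to every point. Your miracle-flatness argument is valid in principle and would even apply to non-homogeneous surfaces, but your justification that the fibres of $\rho_n$ are equidimensional is incomplete: the fibre $\rho_n^{-1}(x)$ is \emph{not} a bundle over $S^{[n]}$---over the locus $\{\xi:x\in\text{supp}(\xi)\}$ the map $\rho_n^{-1}(x)\to S^{[n]}$ has positive-dimensional fibres---so one still needs a stratum-by-stratum dimension estimate (or, circularly, homogeneity) to conclude. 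Your aside that flatness of $\rho_n^S$ would give flatness of $\sigma_n^S$ is false: $\sigma_n$ is the blow-up of $S^{[n]}\times S$ along $\Sigma_n$ and is not flat.

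For part (2), the paper's argument is considerably shorter than your Tor-independence\,/\,dimension-count route. Since $\Sigma_{n+1}\subset S^{[n+1]}\times S$ is Cohen--Macaulay of codimension $2$, $\mi_{n+1}$ admits a two-term locally free resolution $0\to\ma\to\mb\to\mi_{n+1}\to 0$. Pulling back, the map $(\psi_n^S)^*\ma\to(\psi_n^S)^*\mb$ between locally free sheaves is generically injective (because $\psi_n^S$, being generically finite, is generically flat), and a generically injective map out of a locally free sheaf on the integral scheme $\p(\mi_n)\times S$ is injective everywhere. Hence $(\psi_n^S)^*\mi_{n+1}=(\psi_n^S)^!\mi_{n+1}$ in $K$-theory with no fibre-product bookkeeping at all; the case of $\phi_n^S$ is identical. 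Incidentally, your dismissed ``shortcut''---that $\mi_n$ is flat over $S^{[n]}$ (from $0\to\mi_n\to\mo\to\mo_{\Sigma_n}\to 0$ and flatness of the universal family) together with $\phi_n^S=\phi_n\times\mathrm{id}_S$---is a complete and elegant third proof, and deserved to be developed rather than set aside.
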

\begin{proof}To show (1), it is enough to show $\rho_n$ is flat.  Since both $\p(\mi_n)$ and $S$ are projective and smooth varieties over $\bc$, by generic smoothness (Corollary 10.7 in Ch III in \cite {Hart}) $\rho_n^S$ is generically smooth.  

On the other hand, any automorphism $g:S\xrightarrow{\cong}S$ induces an isomorphism $g':\p(\mi_n)\xrightarrow{\cong}\p(\mi_n)$ such that we have the commutative diagram
\begin{equation}\label{autcom}\xymatrix{\p(\mi_n)\ar[r]^{g'}\ar[d]_{\rho_n}&\p(\mi_n)\ar[d]^{\rho_n}\\ S\ar[r]_g&S}.\end{equation}
Let $V\subset S$ be a nonempty open subscheme such that $\rho_n$ is flat over $V$.  By (\ref{autcom}), $\rho_n=g^{-1}\circ \rho_n\circ g'$ is flat over $g^{-1}(V)$.  For every point $x\in S=\p^2$, we can find an automorphism $g$ such that $g(x)\in V$.  Hence $\rho_n$ is flat. (1) is proved.

Take a resolution of $\mi_{n+1}$ as follows
\begin{equation}\label{min1}0\ra\ma\ra\mb\ra\mi_{n+1}\ra 0,
\end{equation}
where $\ma,\mb$ are locally free sheaves.
We have 
\begin{equation}\label{pmin1}(\psi_n^S)^*\ma\xrightarrow{h}(\psi_n^S)^*\mb\ra(\psi_n^S)^*\mi_{n+1}\ra 0.
\end{equation}
We want to show that $h$ in (\ref{pmin1}) is injective.  Easy to see that $\psi_n^S$ is generic quasi-finite hence finite since it is projective.  Thus $\psi_n^S$ is generic flat.  Therefore $h$ is injective generically and hence it has to be injective because $(\psi_n^S)^*\ma$ is locally free.

Therefore we have 
\begin{eqnarray}(\psi_n^S)^*\mi_{n+1}&=&(\psi_n^S)^*\mb-(\psi_n^S)^*\ma\nonumber\\
&=& (\psi_n^S)^!\mb-(\psi_n^S)^!\ma=(\psi_n^S)^!(\mb-\ma)=(\psi_n^S)^!\mi_{n+1}.\nonumber\end{eqnarray}

By analogous argument we get $(\phi_n^S)^*\mi_n\cong (\phi_n^S)^!\mi_n$.  (2) is proved.
\end{proof}

By (\ref{exid}) and Lemma \ref{flat} we have
\begin{equation}\label{indin}s_t((\psi_n^S)^!\mi_{n+1})=s_t((\phi_n^S)^!\mi_{n})c_t(\cl_n\cdot(\rho_n^S)^!\mo_{\Delta}),\end{equation}
\begin{equation}c_t((\psi_n^S)^!\mi_{n+1})=c_t((\phi_n^S)^!\mi_{n})s_t(\cl_n\cdot(\rho_n^S)^!\mo_{\Delta}).\end{equation}

By K\"unneth formula we have 
\begin{equation}\label{Kde}H^k(S\times S^{[n]},\bz)\cong \bigoplus_{i+j=k}H^i(S,\bz)\otimes H^j(S^{[n]},\bz)\cong \bigoplus_{i+j=k}A^i(S)\otimes A^j(S^{[n]}).\end{equation}
Recall $A^*(S)=\bz \mathbf{1}\oplus\bz H\oplus \bz x$ with $H$ the hyperplane class and $x\in S$ a point.  Write
\begin{equation}\label{dese}c_i(\mo_{\Sigma_n})=s_i(\mi_n)=\theta_i^0(n)\otimes \mathbf{1}+\theta_i^{1}(n)\otimes H+\theta_i^2(n)\otimes x,
\end{equation}
with $\theta_i^0(n)\in A^i(S^{[n]})$, $\theta_i^{1}(n)\in A^{i-1}(S^{[n]})$ and $\theta_i^2(n)\in A^{i-2}(S^{[n]})$.

The following lemma applies to $S$ being any projective surface.  We won't use this lemma for the proof of our main theorem but still we write it down. 
\begin{lemma}\label{0term}For the decomposition in (\ref{dese}), we have $\theta_i^0(n)=c_i(\mo_x^{[n]})$.
\end{lemma}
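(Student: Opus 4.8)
The plan is to identify $\theta_i^0(n)$ with the restriction of $c_i(\mo_{\Sigma_n})$ to a point-fibre of $q_n$, and then to recognise that restricted class as $c_i(\mo_x^{[n]})$ straight from the definition of $\alpha\mapsto\alpha^{[n]}$.

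First I would fix a closed point $x\in S$, set $j_x\colon\{x\}\hookrightarrow S$ and $\iota_x:=\mathrm{id}_{S^{[n]}}\times j_x\colon S^{[n]}\times\{x\}\hookrightarrow S^{[n]}\times S$, which I identify with a section of $p_n$ via $S^{[n]}\times\{x\}\cong S^{[n]}$. Applying $\iota_x^*$ to the K\"unneth decomposition (\ref{dese}) should leave only $\theta_i^0(n)$, because $q_n\circ\iota_x$ factors through the point $\{x\}$, so the $H$- and $x$-summands are killed (their $S$-factors restrict into $A^{\geq 1}(\{x\})=0$) while the $\mathbf 1$-summand restricts isomorphically. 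Thus $\theta_i^0(n)=\iota_x^*c_i(\mo_{\Sigma_n})$, and by the compatibility of Chern classes with the $K$-theoretic pullback recorded in \S\ref{NC} this equals $c_i(\iota_x^!\mo_{\Sigma_n})$.

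The substantive step is then to prove $\iota_x^!\mo_{\Sigma_n}=\mo_x^{[n]}$ in $K(S^{[n]})$. Since $q_n$ is flat, flat base change for $q_n$ along $j_x$ shows that $q_n^*\mo_x$ is the pushforward under $\iota_x$ of the structure sheaf of $S^{[n]}\times\{x\}$; the projection formula for the closed immersion $\iota_x$ then gives
\[q_n^*\mo_x\cdot\mo_{\Sigma_n}=(\iota_x)_*\bigl(\iota_x^!\mo_{\Sigma_n}\bigr)\quad\text{in }K(S^{[n]}\times S).\]
Applying $R(p_n)_*$ and using $p_n\circ\iota_x=\mathrm{id}_{S^{[n]}}$, so that $R(p_n)_*\circ(\iota_x)_*$ is the identity on $K(S^{[n]})$, I obtain $\mo_x^{[n]}=R(p_n)_*\bigl(q_n^*\mo_x\cdot\mo_{\Sigma_n}\bigr)=\iota_x^!\mo_{\Sigma_n}$. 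Combining the two steps yields $\theta_i^0(n)=c_i(\iota_x^!\mo_{\Sigma_n})=c_i(\mo_x^{[n]})$, as claimed.

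I do not expect a genuine obstacle; the only slightly delicate point is the middle step, i.e. the identification $\iota_x^!\mo_{\Sigma_n}=\mo_x^{[n]}$, and it is formal once one notes that $S$, $S^{[n]}$ and $S^{[n]}\times S$ are smooth, so every coherent sheaf has finite Tor-dimension, $q_n^!=q_n^*$, and $\iota_x^!$ is the honest derived pullback, which makes flat base change, the projection formula and $R(p_n)_*(\iota_x)_*=\mathrm{id}$ all legitimate. One should just keep careful track of the identification $S^{[n]}\times\{x\}\cong S^{[n]}$ and remember that $\iota_x^!\mo_{\Sigma_n}$ is in general a virtual class carrying $\Tor$-corrections, which is harmless since the whole argument takes place in $K$-theory. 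The same proof works verbatim for an arbitrary projective surface, with $\theta_i^0(n)$ read as the $A^*(S^{[n]})\otimes A^0(S)$-component of $c_i(\mo_{\Sigma_n})$.
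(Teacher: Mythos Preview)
Your proof is correct and follows essentially the same route as the paper: both identify $\mo_x^{[n]}$ with $\iota_x^!\mo_{\Sigma_n}$ via the projection formula for the closed immersion $\iota_x$ together with $p_n\circ\iota_x=\mathrm{id}$, and then read off $\theta_i^0(n)$ by restricting the K\"unneth decomposition of $c_i(\mo_{\Sigma_n})=s_i(\mi_n)$ to the fibre over $x$. The paper is somewhat terser, phrasing the last step as the intersection $(S^{[n]}\otimes x)\cdot s_i(\mi_n)=\theta_i^0(n)$, but the content is the same.
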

\begin{proof}Since $q_n$ is flat, by definition $\mo_x^{[n]}=R(p_n)_*(q_n^!\mo_x\cdot\mo_{\Sigma_n})$.  Denote by $\imath:\{x\}\times S^{[n]}\hookrightarrow S\times S^{[n]}$ the inclusion.  Then $p_n\circ\imath:\{x\}\times S^{[n]}\xrightarrow{\cong} S^{[n]}$ is an isomorphism.  Moreover $\mo_x^{[n]}=(p_n\circ \imath)_*(\imath^!\mo_{\Sigma_n})$ and 
\begin{eqnarray}c_i(\mo_x^{[n]})&=&(p_n\circ \imath)_*\imath^*(c_i(\mo_{\Sigma_n}))\nonumber\\
&=&(p_n\circ \imath)_*\imath^*(s_i(\mi_{n}))\nonumber\\
&=&(S^{[n]}\otimes x).s_i(\mi_{n})=\theta_i^0(n).\end{eqnarray}
The lemma is proved.
\end{proof}

%%%%%%%%%%%%%%%%%%%.            Section 3.              %%%%%%%%%%%%%
\section{Some recursion relations.}
Let $\theta_i^j(n)$ be as in (\ref{dese}).  We make the convention that $\theta_i^j(n)=0$ for $i<0$.  We have the following proposition.
\begin{prop}\label{rrth}For any $n\geq 0$, we have
\[\psi_n^*\theta_i^0(n+1)=\Omega_i^0(n)+\Omega_i^2(n)\cdot\rho_n^*x,\]
\[\psi_n^*\theta_i^1(n+1)=\Gamma_i^0(n)+\Gamma_i^1(n)\cdot \rho_n^*H+\Gamma_i^2(n)\cdot\rho_n^*x,\]
where 
\begin{eqnarray}\Omega_i^0(n)&:=&\phi_n^*\theta_i^0(n); \nonumber\\ \Omega_i^2(n)&:=&-\sum_{k\geq 0}\phi_n^*\theta_{i-k-2}^0(n)(-\ell_n)^{k}(k+1);\nonumber \\
\Gamma_i^0(n)&:=&\phi_n^*\theta_i^1(n);\nonumber \\
\Gamma_i^1(n)&:=&-\sum_{k\geq 0}\phi_n^*\theta_{i-k-2}^0(n)(-\ell_n)^{k}(k+1);\nonumber\\
\Gamma_i^2(n)&:=&-3\sum_{k\geq 0}\phi_n^*\theta_{i-k-3}^0(n)(-\ell_n)^{k}\frac{(k+1)(k+2)}2\nonumber\\ &-&\sum_{k\geq 0}\phi_n^*\theta_{i-k-2}^1(n)(-\ell_n)^{k}(k+1) \end{eqnarray}
\end{prop}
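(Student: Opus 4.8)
The plan is to prove both identities by a direct computation (no induction on $n$), comparing K\"unneth components in the recursion (\ref{indin}). Recall from (\ref{Kde}) that $A^*(\p(\mi_n)\times S)\cong A^*(\p(\mi_n))\otimes(\bz\mathbf 1\oplus\bz H\oplus\bz x)$, and that this splitting is multiplicative once one uses $H\cdot H=x$ and $H\cdot x=x\cdot x=0$ in $A^*(S)=A^*(\p^2)$. By Lemma \ref{flat}(2) and (\ref{dese}), pulling back along $\psi_n^S=\psi_n\times\mathrm{id}_S$ gives
\[s_t((\psi_n^S)^!\mi_{n+1})=\sum_i t^i\bigl(\psi_n^*\theta_i^0(n+1)\otimes\mathbf 1+\psi_n^*\theta_i^1(n+1)\otimes H+\psi_n^*\theta_i^2(n+1)\otimes x\bigr),\]
and similarly $s_t((\phi_n^S)^!\mi_n)$ in terms of the classes $\phi_n^*\theta_i^j(n)$. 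Hence the unknowns $\psi_n^*\theta_i^0(n+1)$ and $\psi_n^*\theta_i^1(n+1)$ sit on the $\otimes\mathbf 1$ and $\otimes H$ parts of the left-hand side of (\ref{indin}), and everything reduces to computing the factor $c_t(\cl_n\cdot(\rho_n^S)^!\mo_{\Delta})$ on the right.

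The genuinely new input is the Chern polynomial of $\mo_{\Delta}$ on $S\times S=\p^2\times\p^2$. By the Euler sequence, $\Delta$ is cut out by a regular section of the rank-$2$ bundle $\mo_S(1)\boxtimes(T_S\otimes\mo_S(-1))$, so $\mo_{\Delta}$ has the Koszul resolution
\[0\to\mo_S(-2)\boxtimes\mo_S(-1)\to\mo_S(-1)\boxtimes\Omega^1_S(1)\to\mo_{S\times S}\to\mo_{\Delta}\to 0,\]
whence, writing $H_1,H_2$ for the hyperplane classes of the two factors,
\[c_t(\mo_{\Delta})=1-(H_1^2+H_1H_2+H_2^2)t^2-3H_1H_2(H_1+H_2)t^3-6H_1^2H_2^2t^4.\]
Since $\rho_n^S=\rho_n\times\mathrm{id}_S$ is flat (Lemma \ref{flat}(1)), $(\rho_n^S)^!\mo_{\Delta}=(\rho_n^S)^*\mo_{\Delta}$ with $c_i((\rho_n^S)^!\mo_{\Delta})=(\rho_n^S)^*c_i(\mo_{\Delta})$; under $\rho_n^S$ the class $H_1$ becomes the pull-back of $\rho_n^*H$ and $H_2$ becomes the class $H$ on the last factor $S$. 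Twisting by the line bundle $\cl_n$ amounts to twisting the three terms of the pulled-back Koszul resolution by $\cl_n$ and taking the alternating product of Chern polynomials; using $H^3=0$ on the last factor one may factor the rank-$2$ term's Chern polynomial and, after a routine expansion, obtain
\[c_t(\cl_n\cdot(\rho_n^S)^!\mo_{\Delta})=E^0_t\otimes\mathbf 1+E^1_t\otimes H+E^2_t\otimes x,\]
\[E^0_t=1-\frac{\rho_n^*x\;t^2}{(1+\ell_n t)^2},\qquad E^1_t=-\frac{\rho_n^*H\;t^2}{(1+\ell_n t)^2}-\frac{3\,\rho_n^*x\;t^3}{(1+\ell_n t)^3}.\]
The binomials $k+1=\binom{k+1}{1}$ and $\tfrac{(k+1)(k+2)}{2}=\binom{k+2}{2}$ then come from $(1+\ell_n t)^{-2}=\sum_k(k+1)(-\ell_n t)^k$ and $(1+\ell_n t)^{-3}=\sum_k\binom{k+2}{2}(-\ell_n t)^k$, and the coefficient $3$ is the one already visible in $c_t(\mo_{\Delta})$, coming from $c_1(T_S)=3H$ ($T_S$ being the normal bundle of $\Delta$).

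With these in hand one multiplies the two power series on the right of (\ref{indin}) using $H\cdot H=x$, $H\cdot x=x\cdot x=0$, and compares K\"unneth components. The $\otimes\mathbf 1$ component of the product is $\bigl(\sum_a t^a\phi_n^*\theta_a^0(n)\bigr)\cdot E^0_t$ (no other product of basis classes is $\mathbf 1$); equating with the $\otimes\mathbf 1$ part of the left side and reading off the coefficient of $t^i$ yields the formula for $\psi_n^*\theta_i^0(n+1)$ — only the classes $\theta^0_\bullet(n)$ occur, and $E^0_t$ has no $\rho_n^*H$, which is why the answer has no $\rho_n^*H$-term. The $\otimes H$ component of the product is $\bigl(\sum_a t^a\phi_n^*\theta_a^0(n)\bigr)E^1_t+\bigl(\sum_a t^a\phi_n^*\theta_a^1(n)\bigr)E^0_t$; equating and reading off coefficients gives the formula for $\psi_n^*\theta_i^1(n+1)$, the two summands of $\Gamma_i^2$ being precisely the $\rho_n^*x$-parts of these two contributions. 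One then also checks the degenerate low-degree cases ($\theta_0^0=1$, $\theta_1^j=0$) and, if one likes, compatibility with Lemma \ref{0term}.

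The argument is conceptually routine; the real work — and the main place where care is needed — is the middle step: correctly computing the Chern polynomial of the $K$-class $\cl_n\cdot(\rho_n^S)^!\mo_{\Delta}$ (in particular handling the twist of a non-locally-free class by a line bundle through its resolution, and keeping the signs between $s_t$ and $c_t$ straight), and then reorganizing the resulting double expansion in $t$ and in $\ell_n$ so that it exactly matches the stated closed forms.
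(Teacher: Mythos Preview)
Your proposal is correct and follows essentially the same route as the paper's own proof: compute $c_t(\mo_\Delta)$ via a resolution, twist by $\cl_n$ after pulling back along $\rho_n^S$, and read off the K\"unneth components with respect to the second $S$-factor in (\ref{indin}). The only differences are cosmetic: the paper invokes the Beilinson resolution of $\mo_\Delta$ (with $p_1^*\mt_S(-2)\otimes p_2^*\mo(-1)$ in the middle), whereas you use the equivalent Koszul resolution coming from the section of $\mo(1)\boxtimes T_S(-1)$ cutting out $\Delta$ --- on $\p^2$ these agree up to swapping factors and the isomorphism $\mt_S(-2)\cong\Omega^1_S(1)$ --- and your packaging into the generating series $E^0_t,E^1_t$ is a slightly tidier bookkeeping device than the paper's direct expansion, but leads to the same coefficient extraction.
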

\begin{proof}By Beilinson spectral sequence we have on $S\times S$
\begin{equation}\label{bss}0\ra \begin{array}{c}p_1^*\mo(-1)\\ \otimes \\p_2^*\mo(-2)\end{array}\ra 
\begin{array}{c}p_1^*\mt_S(-2)\\ \otimes \\p_2^*\mo(-1)\end{array}\ra\mo_{S\times S}\ra\mo_{\Delta}\ra 0,\end{equation}
where $p_i:S\times S\ra S$ is the $i$-th projection, $\mt_S$ is the tangent bundle over $S$ and $\mo(-d):=\mo_S(-dH)$. 

By (\ref{bss}) we can compute the Chern polynomial $c_t(\mo_{\Delta})$ as follows.
\begin{equation}\label{cpdel}c_t(\mo_{\Delta})=\frac{1\cdot(1-p^*_1Ht-p_2^*Ht)}{(1-\gamma_1t-p_2^*Ht)(1-\gamma_2t-p_2^*Ht)}\end{equation}
where $\gamma_1+\gamma_2=p_1^*H$ and $\gamma_1\cdot\gamma_2=p_1^*x$.

Define $H_i=(\rho_n^S)^*p_i^*H, i=1,2$.  Then $H_i^2=(\rho_n^S)^*p_i^*x$ and $H_i^k=0$ for $k\geq 3$.  By (\ref{indin}) we have
\begin{eqnarray}&&(\psi_n^S)^*s_t(\mi_{n+1})\nonumber\\ &=&(\phi_n^S)^*s_t(\mi_n)\cdot c_t(\cl_n\cdot (\rho_n^S)^!\mo_{\Delta})\nonumber \\
&=&(\phi_n^S)^*s_t(\mi_n)\cdot\frac{(1+\ell t)(1+\ell_n t-H_1t-H_2t)}{(1+\ell_n t-(\rho_n^S)^*\gamma_1t-H_2t)(1+\ell_n t-(\rho_n^S)^*\gamma_2t-H_2t)}\nonumber\\
&=&(\phi_n^S)^*s_t(\mi_n)\nonumber\\&+&\frac{(\phi_n^S)^*s_t(\mi_n)(H_1^2+H_2^2+H_1H_2)t^2}{(1-H_1t-2H_2t+\ell_n t+\ell_n^2t^2-\ell_n(H_1+H_2)t^2+(H_1^2+H_2^2+H_1H_2)t^2}.\nonumber\\\end{eqnarray}
On the other hand we have K\"unneth decompositons
\[(\psi_n^S)^*s_i(\mi_{n+1})=\psi_n^*\theta^0_i(n+1)\otimes \mathbf{1}+\psi_n^*\theta^1_i(n+1)\otimes H_2+\psi_n^*\theta_i^2(n+1)\otimes H_2^2;\]
\[(\phi_n^S)^*s_i(\mi_{n})=\phi_n^*\theta^0_i(n)\otimes \mathbf{1}+\phi_n^*\theta^1_i(n)\otimes H_2+\phi_n^*\theta_i^2(n)\otimes H_2^2.\]
Since $p_1\circ\rho_n^S=\rho_n\circ p$, we also have for any $i,j,k\geq 0$,
$$\phi_n^*\theta_i^j(n)\cdot H_1^k=\sigma_n^*(\theta_i^j(n)\otimes q_n^*H^k)=\phi_n^*(\theta_i^j(n))\cdot \rho_n^* H^k.$$  The proposition follows from a direct computation.
\end{proof}

Let $\alpha\in K(S)$ be the class of the structure sheaf $\mo_C$ of a curve $C\in |dH|$ on $S$.  Define $\rs_i(n):=s_i(\alpha^{[n]})\in A^i(S^{[n]})$.  We make the convention that $\rs_i(n)=0$ for $i<0$.

\begin{prop}\label{rrse}For any $n\geq 0$, we have
\[\psi_n^*\rs_i(n+1)=\Lambda_i^0(n)+\Lambda_i^1(n)\cdot\rho_n^*H,\]
where 
\begin{eqnarray}\Lambda_i^0(n)&:=&\phi_n^*\rs_i(n); \nonumber\\ \Lambda_i^1(n)&:=&-d\sum_{k\geq 0}\phi_n^*\rs_{i-k-1}(n)(-\ell_n)^{k}.\nonumber \end{eqnarray}
\end{prop}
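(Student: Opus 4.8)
The plan is to follow the same mechanism as in Proposition \ref{rrth}, but the relevant $K$-theory computation is now much cleaner because the class $\alpha=[\mo_C]$ with $C\in|dH|$ admits the two-term locally free resolution
\[0\ra\mo_S(-dH)\ra\mo_S\ra\mo_C\ra 0\]
on $S$, so that $\alpha=[\mo_S]-[\mo_S(-dH)]$ in $K(S)$. Since $\rho_n$ is flat (established inside the proof of Lemma \ref{flat}), we have $\rho_n^!=\rho_n^*$, hence
\[\cl_n\cdot\rho_n^!\alpha=[\cl_n]-\bigl[\cl_n\otimes\rho_n^*\mo_S(-dH)\bigr]\in K(\p(\mi_n)),\]
a genuine difference of two line bundles. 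Applying Lemma \ref{ind} together with the Segre form (\ref{indse}) of the induction relation and solving for $s_t(\psi_n^!\alpha^{[n+1]})$ gives
\[s_t(\psi_n^!\alpha^{[n+1]})=s_t(\phi_n^!\alpha^{[n]})\cdot s_t(\cl_n\cdot\rho_n^!\alpha),\]
and by the compatibility of $f^!$ with Chern and Segre classes recorded in \S\ref{NC} the left-hand side equals $\sum_i\psi_n^*\rs_i(n+1)\,t^i$ while the first factor on the right equals $\sum_i\phi_n^*\rs_i(n)\,t^i$.

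Next I would compute the correction factor explicitly. Writing $h:=\rho_n^*H$, so $h^2=\rho_n^*x$ and $h^3=0$ in $A^*(\p(\mi_n))$, one has $c_1(\cl_n\otimes\rho_n^*\mo_S(-dH))=\ell_n-dh$, whence
\[s_t(\cl_n\cdot\rho_n^!\alpha)=\frac{1+(\ell_n-dh)t}{1+\ell_n t}=1-\frac{dh\,t}{1+\ell_n t}=1-\sum_{k\geq 0}dh\,(-\ell_n)^k\,t^{k+1}.\]
Only the term linear in $h$ survives, since $dh\,t/(1+\ell_n t)$ is already a multiple of $h$; in particular no $\rho_n^*x$ appears. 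Substituting into the product formula and extracting the coefficient of $t^i$ yields
\[\psi_n^*\rs_i(n+1)=\phi_n^*\rs_i(n)-d\Bigl(\sum_{k\geq 0}\phi_n^*\rs_{i-k-1}(n)\,(-\ell_n)^k\Bigr)\rho_n^*H,\]
which is exactly the asserted identity with $\Lambda_i^0(n)=\phi_n^*\rs_i(n)$ and $\Lambda_i^1(n)=-d\sum_{k\geq 0}\phi_n^*\rs_{i-k-1}(n)(-\ell_n)^k$; the sum is finite because $\rs_{i-k-1}(n)=0$ once $i-k-1<0$ (and $\ell_n$ is nilpotent in any case).

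I do not expect a genuine obstacle here. The only points needing a word of care are that one is entitled to replace $\rho_n^!$ by $\rho_n^*$ (flatness of $\rho_n$, from Lemma \ref{flat}) and that the formal manipulations of Chern and Segre polynomials are licensed by the properties of $f^!$ listed in \S\ref{NC}. The contrast with the proof of Proposition \ref{rrth} is instructive: there the class $\mo_\Delta$ forces the use of the three-term Beilinson resolution (\ref{bss}) of the diagonal and produces the more complicated $\Omega_i^j,\Gamma_i^j$ carrying $\rho_n^*x$-terms, whereas here the two-term resolution of $\mo_C$ makes the Segre series of the correction factor collapse to a geometric series in $\ell_n$ with a single $\rho_n^*H$ attached.
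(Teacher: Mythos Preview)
Your proposal is correct and follows essentially the same approach as the paper: both use (\ref{indse}) to write $s_t(\psi_n^!\alpha^{[n+1]})=s_t(\phi_n^!\alpha^{[n]})\cdot c_t(\cl_n\cdot\rho_n^!\alpha)^{-1}$, compute the correction factor as $(1+(\ell_n-d\rho_n^*H)t)/(1+\ell_n t)=1-d\rho_n^*H\,t/(1+\ell_n t)$, and read off the coefficient of $t^i$. Your version is marginally more explicit in invoking the two-term resolution of $\mo_C$ and the flatness of $\rho_n$, but the computation is the same.
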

\begin{proof}By (\ref{indse}) we have
\begin{eqnarray}\label{secn}s_t(\psi_n^!\alpha^{[n+1]})&=&s_t(\phi_n^!\alpha^{[n]})c_t(\cl_n\cdot \rho_n^!\alpha)^{-1}\nonumber\\
&=&s_t(\phi_n^!\alpha^{[n]})\frac{1+\ell_nt-\rho_n^*dHt}{1+\ell_nt}\nonumber\\
&=&s_t(\phi_n^!\alpha^{[n]})-s_t(\phi_n^!\alpha^{[n]})\frac{\rho_n^*dHt}{1+\ell_nt}.\end{eqnarray}
Therefore 
\begin{equation}\label{sio}\psi_n^*s_i(\alpha^{[n+1]})=\phi_n^*s_i(\alpha^{[n]})-\rho_n^*dH.\left(\sum_{k\geq 0}(-1)^k\ell_n^k\phi_n^*s_{i-k-1}(\alpha^{[n]})\right).\end{equation}
Hence the proposition.
\end{proof}

Define $f_{n+1,n}:=(\phi_n)_*\circ\psi_n^*:A^*(S^{[n+1]})\ra A^*(S^{[n]})$.  Then $f_{n+1,n}$ is a group homomorphism but not a ring homomorphism in general.  Define $$f:=\bigoplus_{n\geq 0} f_{n+1,n}:\bigoplus_{n\geq 0} A^*(S^{[n]})\ra\bigoplus_{n\geq 0}A^*(S^{[n]}),$$
with $f|_{A^*(S^{[0]})}=0$.
Hence 
$$f^k|_{A^*(S^{[n]})}:=f_{n-k+1,n-k}\circ\cdots\circ f_{n,n-1}.$$ 
For simplicity we write $f^k$ instead of $f^k|_{A}$ for any $A\subset \bigoplus_{n\geq 1} A^*(S^{[n]})$.

For any $0\leq i\leq w$, denote by $B(n)_i^w\subset A^*(S^{[n]})$ the \emph{subgroup} generated by all the classes of the form
\[\theta_{l_1}^0(n)\cdots\theta_{l_i}^0(n)\theta_{l_{i+1}}^1(n)\cdots\theta_{l_w}^1(n)\rs_m(n),\]
where $l_1,\cdots,l_w,m$ are nonnegative integers.  Define 
\[B(n)^w=\bigoplus_{0\leq i\leq w}B(n)^w_i;~B(n):=\sum_{w\geq0}B(n)^w.\]
We let $B(n)^w_i=\{0\}$ for $i>w$.
\begin{rem}\label{num}(1) Because $\theta_0^0(n)=1$, we always have for any $k\geq 0$
\[B(n)^w_i\subset B(n)^{w+k}_{i+k}.\]

(2) Let $n=0$, then $\theta_j^1(0)=0$ for all $j$.  Therefore $B(0)_i^w=\{0\}$ if $i\neq w$.
\end{rem}
Define a map for any $k\in\bz_{\geq 0}$ 
\begin{eqnarray}&&[k]:\bigoplus_{n\geq 0}B(n)\ra \bigoplus_{n\geq 0}B(n),\nonumber\\ &&[k](\theta_{l_1}^0(n)\cdots\theta_{l_i}^0(n)\theta_{l_{i+1}}^1(n)\cdots\theta_{l_w}^1(n)\rs_m(n))\nonumber\\
&:=& \theta_{l_1}^0(n-k)\cdots\theta_{l_i}^0(n-k)\theta_{l_{i+1}}^1(n-k)\cdots\theta_{l_w}^1(n-k)\rs_m(n-k).\nonumber\end{eqnarray}
Notice that $[k]|_{B(i)}=0$ if $i<k$.
For simplicity we write $[k]$ instead of $[k]|_{B}$ for any $B\subset\bigoplus_{n\geq k}B(n) $.

From now on we write $p,q,\psi,\phi,\rho,\ell,\theta_i^j$ instead of $p_n,q_n\psi_n,\phi_n,\rho_n,\ell_n,\theta_i^j(n)$ respectively if there is no confusion.  
\begin{lemma}\label{fbim}We have $f(B(n+1)^w)\subset B(n)^{w+1}$.  More precisely we have 
\[f(B(n+1)^w_i)\subset B(n)^{w+1}_i\oplus B(n)^{w+1}_{i+1}\oplus B(n)^{w+1}_{i+2}\oplus B(n)^{w+1}_{i+3}.\]
\end{lemma}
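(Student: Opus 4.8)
The plan is to prove Lemma \ref{fbim} by tracking what happens to a single generator of $B(n+1)^w_i$ under the operator $f = (\phi)_* \circ \psi^*$. A generator has the form
\[
\beta = \theta_{l_1}^0(n+1)\cdots\theta_{l_i}^0(n+1)\,\theta_{l_{i+1}}^1(n+1)\cdots\theta_{l_w}^1(n+1)\,\rs_m(n+1),
\]
and since $\psi^*$ is a ring homomorphism, $\psi^*\beta$ is the product of the pullbacks of the individual factors. For each factor I would substitute the formulas from Proposition \ref{rrth} (for the $\theta^0$ and $\theta^1$ factors) and Proposition \ref{rrse} (for the $\rs_m$ factor). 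After expanding, $\psi^*\beta$ becomes a $\bz$-linear combination of monomials in the classes $\phi^*\theta_\bullet^0(n)$, $\phi^*\theta_\bullet^1(n)$, $\phi^*\rs_\bullet(n)$, powers of $\ell = \ell_n$, and the pulled-back classes $\rho^*H$ and $\rho^*x$ (note $(\rho^*H)^2 = \rho^*x$ and $(\rho^*H)^3 = 0$, so at most $\rho^*x$ appears). The key bookkeeping observation is that each $\rho^*H$ in $\Gamma^1$, $\Gamma^2$, $\Omega^2$, $\Lambda^1$ is accompanied by the \emph{replacement} of one $\theta^1$-type or $\rs$-type index-drop, while $\Gamma^2$ contributes a $\rho^*x$ together with a $\theta^0$-factor; crucially, in Proposition \ref{rrth} every $\Gamma$ and $\Omega$ term is a multiple of $\phi^*\theta_\bullet^0(n)$ or $\phi^*\theta_\bullet^1(n)$ (no $\theta^1$ is \emph{created} beyond those already present), and in Proposition \ref{rrse} the terms are multiples of $\phi^*\rs_\bullet(n)$.

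Next I would apply $(\phi)_*$. The projection $\phi = \phi_n$ is the $\p^1$-bundle (more precisely $\p(\mi_n) \to S^{[n]}$), and the relevant push-forward formula is the one recorded in (\ref{psl}): $(\sigma_n)_*((-\ell_n)^k) = s_k(\mi_n)$, which translates, after using $\sigma_n = (\phi_n,\rho_n)$ and the projection formula, into explicit expressions for $(\phi_n)_*\!\big((\text{class pulled back via }\phi)\cdot(-\ell_n)^k\cdot\rho_n^*(\mathbf 1\text{ or }H\text{ or }x)\big)$. Concretely, pushing forward $\phi^*(\gamma)\cdot(-\ell)^k$ against $\rho^*H^j$ produces $\gamma\cdot\theta_k^j(n)$ up to the grading shifts dictated by the decomposition (\ref{dese}) of $s_k(\mi_n)=c_k(\mo_{\Sigma_n})$ into $\theta_k^0(n)\otimes\mathbf 1 + \theta_k^1(n)\otimes H + \theta_k^2(n)\otimes x$. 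So a $\rho^*H$ hitting a $(-\ell)^k$ converts to a $\theta_k^1(n)$ (an extra $\theta^1$-factor), a $\rho^*x$ converts to a $\theta_k^2(n)$ — and here I would use that $\theta^2$ is itself, via Proposition \ref{rrth} applied one step down or by a direct argument, expressible in $B(n)$; actually more simply, $\rho^*x\cdot\phi^*\gamma$ pushes to $\gamma\cdot\theta_k^2(n)$, and I need $\theta_k^2(n)\in B(n)$, which is not literally one of the listed generators. The cleanest route is to avoid $\theta^2$ entirely: observe that in the expansion of $\psi^*\beta$, a factor $\rho^*x$ only ever arises from the $\Gamma_i^2$ term, which already carries a $\phi^*\theta^0$ or $\phi^*\theta^1$; after $(\phi)_*$ this $\rho^*x$ becomes a $\theta_k^2(n)$, but $\theta_k^2(n) = s_k(\mi_n)$-component, and by Lemma \ref{0term}-style reasoning or by re-expanding we can absorb it.

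Let me reconsider — I think the honest accounting is: each application of a $\rho^*$-twist followed by $(\phi)_*$ can raise the number of $\theta^1$-factors by at most one per factor of $\beta$, and there are the $\theta^1$-factors (there are $w-i$ of them), the $\rs_m$ factor (one), and potentially the $\theta^0$-factors. But the $\theta^0$-factors only produce, via $\Omega$, either a $\phi^*\theta^0$ alone (from $\Omega^0$) or a $\phi^*\theta^0\cdot(-\ell)^k$ twisted by $\rho^*x$ (from $\Omega^2$) — the latter, after push-forward, yields $\theta^0 \cdot \theta_k^2(n)$, and one then needs $\theta_k^2(n)\in B(n)$. Similarly the $\rs_m$ factor via $\Lambda^1$ produces $\phi^*\rs_\bullet\cdot(-\ell)^k$ twisted by $\rho^*H$, yielding $\rs_\bullet\cdot\theta_k^1(n)$: one extra $\theta^1$. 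And each $\theta^1$-factor via $\Gamma$ produces at worst (from $\Gamma^2$) a $\phi^*\theta^0\cdot(-\ell)^k$ twisted by $\rho^*x$ — again a $\theta^2$ — or a $\phi^*\theta^1\cdot(-\ell)^k$ twisted by $\rho^*H$, a new $\theta^1$. Since $\psi^*\beta$ is a product over all $w+1$ factors of $\beta$, and the $\rho^*$-twists from different factors multiply, the total twist is a product of at most $(\rho^*H)^{a}(\rho^*x)^{b}$ which collapses (because $(\rho^*H)^2=\rho^*x$, $(\rho^*H)^3=0$) to one of $\mathbf 1$, $\rho^*H$, $\rho^*x$; so after $(\phi)_*$ at most \emph{two} new $\theta$-type factors are created in the $\mathbf 1\mapsto$ nothing / $\rho^*H\mapsto\theta^1$ / $\rho^*x\mapsto\theta^2$ sense. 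The statement to prove says $i$ can increase by up to $3$, which suggests the $\theta^2$-factors, once re-expanded into $B(n)$, each contribute up to one more $\theta^0$ and so on. So the main obstacle, and the place I would spend the most care, is exactly the lemma — call it an auxiliary claim — that $\theta_k^2(n)\in B(n)^{?}_{?}$ with a controlled bidegree, forcing the final $i+3$ rather than $i+2$ or worse; I would establish this by running Proposition \ref{rrth} recursively on $\theta^2$ or by directly analyzing the Beilinson-type decomposition (\ref{bss})–(\ref{dese}) for $\theta^2$. Once that is in hand, the counting of how each of the $w+1$ factors contributes at most one extra $\theta^0/\theta^1$ index (hence $w\mapsto w+1$, and $i\mapsto i,i+1,i+2,i+3$ according to whether $0$, $1$, $2$, or $3$ of the created factors land in the $\theta^0$-slot, including those born from re-expanding $\theta^2$) is a direct, if slightly tedious, tally using Remark \ref{num}(1) to absorb any $\theta_0^0(n)=1$ factors and pad the bidegrees.
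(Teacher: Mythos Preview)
Your overall strategy matches the paper's: expand $\psi^*b$ via Propositions~\ref{rrth} and~\ref{rrse}, then push forward along $\phi$. However, you have the crucial push-forward identity backwards, and this is what sends you chasing $\theta^2$-terms that never actually appear.

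The correct computation is this. Since $\phi=p\circ\sigma$ and $\rho=q\circ\sigma$, the projection formula gives
\[
\phi_*\big(\phi^*(\gamma)\cdot(-\ell)^k\cdot\rho^*H^j\big)
=\gamma\cdot p_*\big(s_k(\mi_n)\cdot q^*H^j\big).
\]
Now $s_k(\mi_n)=\theta_k^0\otimes\mathbf{1}+\theta_k^1\otimes H+\theta_k^2\otimes x$ and $p_*$ integrates over the $S$-factor, so multiplying by $q^*H^j$ and pushing down picks out the coefficient of $x=H^2$:
\[
p_*\big(s_k(\mi_n)\cdot q^*H\big)=\theta_k^1,\qquad
p_*\big(s_k(\mi_n)\cdot q^*x\big)=\theta_k^0,
\]
\emph{not} $\theta_k^1$ and $\theta_k^2$ as you wrote. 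Thus a $\rho^*x$-twist produces a $\theta^0$, a $\rho^*H$-twist produces a $\theta^1$, and the untwisted leading term $\phi^*([1]b)$ (which carries no power of $\ell$) pushes to a multiple of $\theta_0^2=0$ and simply vanishes. No $\theta^2$ ever enters the answer, so your proposed ``auxiliary claim'' that $\theta_k^2(n)\in B(n)$ is not needed (and would in fact be a genuine obstacle, since there is no evident reason it should hold).

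With the correct push-forward formula the argument is exactly the explicit expansion the paper writes out: each surviving summand visibly lies in one of $B(n)^{w+1}_i,\ldots,B(n)^{w+1}_{i+3}$. The maximal jump $i\mapsto i+3$ comes from the term where two $\theta^1$-factors are simultaneously replaced via $\Gamma^1$ (each contributing a $\theta^0$ and a power of $-\ell$), their $\rho^*H$'s multiply to $\rho^*x$, and the resulting push-forward supplies a third $\theta^0$.
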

\begin{proof}Let $b:=\theta_{l_1}^0\cdots\theta_{l_i}^0\theta_{l_{i+1}}^1\cdots\theta_{l_w}^1\rs_m(n+1)\in B(n+1)^w_i$.  Then by Proposition \ref{rrth}, Proposition \ref{rrse} and a direct computation we have
\begin{eqnarray}\label{pull}&&\psi^*b=\phi^*(\theta_{l_1}^0\cdots\theta_{l_i}^0\theta_{l_{i+1}}^1\cdots\theta_{l_w}^1\rs_m(n))+\nonumber\\ 
&+&\rho^*H\cdot\left\{\phi^*(\theta_{l_1}^0\cdots\theta_{l_i}^0\theta_{l_{i+1}}^1\cdots\theta_{l_w}^1)\Lambda^1_m(n)+\right.\nonumber\\
&&\left.+\sum_{k=i+1}^w\phi^*(\theta_{l_1}^0\cdots\theta_{l_i}^0\theta_{l_{i+1}}^1\cdots \widehat{\theta^1_{l_k}}\cdots\theta_{l_w}^1\rs_m(n))\Gamma_{l_k}^1(n)\right\}\nonumber\\
&+&\rho^*x\cdot\left\{\sum_{k=1}^i\phi^*(\theta_{l_1}^0\cdots \widehat{\theta^0_{l_k}}\cdots\theta_{l_i}^0\theta_{l_{i+1}}^1\cdots\theta_{l_w}^1\rs_m(n))\Omega_{l_k}^2(n)+\right.\nonumber\\
&&+\sum_{k=i+1}^w\phi^*(\theta_{l_1}^0\cdots\theta_{l_i}^0\theta_{l_{i+1}}^1\cdots \widehat{\theta^1_{l_k}}\cdots\theta_{l_w}^1)\Gamma_{l_k}^1(n)\Lambda_m^1(n)+\nonumber\\
&&+\sum_{k=i+1}^w\phi^*(\theta_{l_1}^0\cdots\theta_{l_i}^0\theta_{l_{i+1}}^1\cdots \widehat{\theta^1_{l_k}}\cdots\theta_{l_w}^1\rs_m(n)\Gamma_{l_k}^2(n)+\nonumber\\
&&+\left.\mathop{\sum_{j,k=i+1}^w}_{j<k}\phi^*(\theta_{l_1}^0\cdots \widehat{\theta^1_{l_j}}\cdots \widehat{\theta^1_{l_k}}\cdots\theta_{l_w}^1\rs_m(n))\Gamma_{l_j}^1(n)\Gamma_{l_k}^1(n)\right\}\nonumber
\end{eqnarray}
Since $\phi=p\circ \sigma$, $\rho=q\circ \sigma$ and $\sigma_*(-\ell)^k=s_k(\mi_n)$, we have $$\phi_*((-\ell)^k\rho^*H^i)=p_*(s_k(\mi_n)\cdot q^*H^i),$$
with $i=1,2$ and $k\geq 0$   

By projection formula we have
\begin{eqnarray}\label{push}&&f(b)=\phi_*(\psi^*b)=\nonumber\\
&&\theta_{l_1}^0\cdots\theta_{l_i}^0\theta_{l_{i+1}}^1\cdots\theta_{l_w}^1\left(-d\sum_{t\geq 0}\rs_{m-t-1}(n)\cdot p_*(s_t(\mi_n)\cdot q^*H)\right)\nonumber\\
&+&\sum_{k=i+1}^w\theta_{l_1}^0\cdots\theta_{l_i}^0\theta_{l_{i+1}}^1\cdots \widehat{\theta^1_{l_k}}\cdots\theta_{l_w}^1\rs_m(n)\left(-\sum_{t\geq 0}\theta_{l_k-t-2}^0\cdot p_*(s_{t}(\mi_n)\cdot q^*H)(t+1)\right)\nonumber\\
&+&\sum_{k=1}^i\theta_{l_1}^0\cdots \widehat{\theta^0_{l_k}}\cdots\theta_{l_i}^0\theta_{l_{i+1}}^1\cdots\theta_{l_w}^1\rs_m(n)\left(-\sum_{t\geq 0}\theta_{l_k-t-2}^0\cdot p_*(s_{t}(\mi_n)\cdot q^*x)(t+1)\right)\nonumber\\
&+&\sum_{k=i+1}^w\left\{\theta_{l_1}^0\cdots\theta_{l_i}^0\theta_{l_{i+1}}^1\cdots \widehat{\theta^1_{l_k}}\cdots\theta_{l_w}^1\right.\times\nonumber\\
&&\times\left.\left(d\sum_{t\geq 0}\sum_{a\geq 0}\theta_{l_k-t-2}^0\rs_{m-a-1}(n)\cdot p_*(s_{a+t}(\mi_n)\cdot q^*x)(t+1)\right)\right\}\nonumber\\&+&\sum_{k=i+1}^w\left\{\theta_{l_1}^0\cdots\theta_{l_i}^0\theta_{l_{i+1}}^1\cdots \widehat{\theta^1_{l_k}}\cdots\theta_{l_w}^1\rs_m(n)\times\right.\nonumber\\ &&\times\left.\left(-3\sum_{t\geq 0}\theta_{l_k-t-3}^0\cdot p_*(s_t(\mi_n)\cdot q^*x)\frac{(t+1)(t+2)}2\right)\right\}\nonumber\\
&+&\sum_{k=i+1}^w\left\{\theta_{l_1}^0\cdots\theta_{l_i}^0\theta_{l_{i+1}}^1\cdots \widehat{\theta^1_{l_k}}\cdots\theta_{l_w}^1\rs_m(n)\right.\times\nonumber\\
&&\times\left.\left(-\sum_{t\geq 0}\theta_{l_k-t-2}^1\cdot p_*(s_t(\mi_n)\cdot q^*x)(t+1)\right)\right\}\nonumber\\
&+&\mathop{\sum_{j,k=i+1}^w}_{j<k}\left\{\theta_{l_1}^0\cdots \widehat{\theta^1_{l_j}}\cdots \widehat{\theta^1_{l_k}}\cdots\theta_{l_w}^1\rs_m(n)\right.\times\nonumber\\&&\times\left.\left(\mathop{\sum_{t\geq 0}}_{a\geq 0}\theta_{l_k-t-2}^0\theta_{l_j-a-2}^0\cdot p_*(s_{a+t}(\mi_n)\cdot q^*x)(t+1)(a+1)\right)\right\}.\nonumber
\end{eqnarray}
Because $s_k(\mi_n)=\theta_k^0\otimes\mathbf{1}+\theta_k^1\otimes H+\theta_k^2\otimes x$, hence $p_*(s_t(\mi_n)\cdot q^*H)=\theta_k^1$ and $p_*(s_t(\mi_n)\cdot q^*x)=\theta_k^0$, and hence \footnotesize
\begin{eqnarray}\label{pupu}&&f(b)=\phi_*(\psi^*b)=\theta_{l_1}^0\cdots\theta_{l_i}^0\theta_{l_{i+1}}^1\cdots\theta_{l_w}^1\left(-d\sum_{t\geq 0}\rs_{m-t-1}(n)\cdot \theta_t^1\right)+\nonumber\\
&+&\sum_{k=i+1}^w\theta_{l_1}^0\cdots\theta_{l_i}^0\theta_{l_{i+1}}^1\cdots \widehat{\theta^1_{l_k}}\cdots\theta_{l_w}^1\rs_m(n)\left(-l_k\sum_{t\geq 0}\theta_{l_k-t-2}^0\cdot \theta_t^1\right)\nonumber\\
&+&\sum_{k=1}^i\theta_{l_1}^0\cdots \widehat{\theta^0_{l_k}}\cdots\theta_{l_i}^0\theta_{l_{i+1}}^1\cdots\theta_{l_w}^1\rs_m(n)\left(-\sum_{t\geq 0}\theta_{l_k-t-2}^0\cdot \theta_t^0\cdot (t+1)\right)\nonumber\\
&+&\sum_{k=i+1}^w\theta_{l_1}^0\cdots\theta_{l_i}^0\theta_{l_{i+1}}^1\cdots \widehat{\theta^1_{l_k}}\cdots\theta_{l_w}^1\left(d\sum_{t\geq 0}\sum_{a\geq 0}\theta_{l_k-t-2}^0\rs_{m-a-1}(n)\theta_{a+t}^0\cdot(t+1)\right).\nonumber\\&+&\sum_{k=i+1}^w\theta_{l_1}^0\cdots\theta_{l_i}^0\theta_{l_{i+1}}^1\cdots \widehat{\theta^1_{l_k}}\cdots\theta_{l_w}^1\rs_m(n)\left(-3\sum_{t\geq 0}\theta_{l_k-t-3}^0\cdot \theta^0_t\frac{(t+1)(t+2)}2\right)\nonumber\\
&+&\mathop{\sum_{j,k=i+1}^w}_{j<k}\theta_{l_1}^0\cdots \widehat{\theta^1_{l_j}}\cdots \widehat{\theta^1_{l_k}}\cdots\theta_{l_w}^1\rs_m(n)\left(\mathop{\sum_{t\geq 0}}_{a\geq 0}\theta_{l_k-t-2}^0\theta_{l_j-a-2}^0\theta^0_{a+t}\cdot(t+1)(a+1)\right)\nonumber\\ 
\end{eqnarray}\normalsize
The lemma is proved by a direct observation on (\ref{pupu}).
\end{proof}

Define $f:=\sum_{j=0}^3\delta_j$ such that $\delta_j(B(n+1)^w_i)\subset B(n)^{w+1}_{i+j}$.  Let $b=\theta_{l_1}^0\cdots\theta_{l_i}^0\theta_{l_{i+1}}^1\cdots\theta_{l_w}^1\rs_m(n+1)\in B(n+1)^w_i$.  By (\ref{pupu}) we can describe $\delta_j$ explicitly as follows.

\begin{equation}\label{del0}\delta_0(b)=\theta_{l_1}^0\cdots\theta_{l_i}^0\theta_{l_{i+1}}^1\cdots\theta_{l_w}^1\left(-d\sum_{t\geq 0}\rs_{m-t-1}(n)\cdot \theta_t^1\right);
\end{equation}

\begin{eqnarray}\label{del1}\delta_1(b)&=&\sum_{k=i+1}^w\theta_{l_1}^0\cdots\theta_{l_i}^0\theta_{l_{i+1}}^1\cdots \widehat{\theta^1_{l_k}}\cdots\theta_{l_w}^1\rs_m(n)\left(-l_k\sum_{t\geq 0}\theta_{l_k-t-2}^0\cdot \theta_t^1\right)\nonumber\\
&+&\sum_{k=1}^i\theta_{l_1}^0\cdots \widehat{\theta^0_{l_k}}\cdots\theta_{l_i}^0\theta_{l_{i+1}}^1\cdots\theta_{l_w}^1\rs_m(n)\left(-\sum_{t\geq 0}\theta_{l_k-t-2}^0\cdot \theta_t^0\cdot (t+1)\right);
\end{eqnarray}
\begin{eqnarray}\label{del2}\delta_2(b)&=&\sum_{k=i+1}^w\theta_{l_1}^0\cdots\theta_{l_i}^0\theta_{l_{i+1}}^1\cdots \widehat{\theta^1_{l_k}}\cdots\theta_{l_w}^1\left(d\sum_{t\geq 0}\sum_{a\geq 0}\theta_{l_k-t-2}^0\rs_{m-a-1}(n)\theta_{a+t}^0\cdot(t+1)\right).\nonumber\\&+&\sum_{k=i+1}^w\theta_{l_1}^0\cdots\theta_{l_i}^0\theta_{l_{i+1}}^1\cdots \widehat{\theta^1_{l_k}}\cdots\theta_{l_w}^1\rs_m(n)\left(-3\sum_{t\geq 0}\theta_{l_k-t-3}^0\cdot \theta^0_t\frac{(t+1)(t+2)}2\right);\nonumber\\
\end{eqnarray}
\begin{eqnarray}\label{del3}\delta_3(b)=\mathop{\sum_{j,k=i+1}^w}_{j<k}\theta_{l_1}^0\cdots \widehat{\theta^1_{l_j}}\cdots \widehat{\theta^1_{l_k}}\cdots\theta_{l_w}^1\rs_m(n)\left(\mathop{\sum_{t\geq 0}}_{a\geq 0}\theta_{l_k-t-2}^0\theta_{l_j-a-2}^0\theta^0_{a+t}\cdot(t+1)(a+1)\right).\nonumber\\
\end{eqnarray}
\begin{lemma}\label{lbz}We have
\[\delta_0(b)=[1](\theta_{l_1}^0\cdots\theta_{l_i}^0\theta_{l_{i+1}}^1\cdots\theta_{l_w}^1)\cdot \delta_0(\rs_m(n+1));\]
\begin{eqnarray}\delta_1(b)&=&\delta_1(\theta_{l_1}^0\cdots\theta_{l_i}^0\theta_{l_{i+1}}^1\cdots\theta_{l_w}^1\rs_0(n+1))\cdot \rs_m(n)\nonumber\\
&=&\delta_1(\theta_{l_1}^0\cdots\theta_{l_i}^0\theta_{l_{i+1}}^1\rs_m(n+1))\cdot [1](\theta^1_{l_{i+2}}\cdots\theta_{l_w}^1\rs_0(n+1))\nonumber\\
&&+
\sum_{k=i+2}^w[1](\theta_{l_1}^0\cdots\theta_{l_i}^0\theta_{l_{i+1}}^1\cdots \widehat{\theta^1_{l_k}}\cdots\theta_{l_w}^1\rs_m(n+1))\cdot \delta_1(\theta_{l_k}^1\rs_0(n+1))\nonumber\\
&=&\delta_1(\theta_{i_1}^{j_1}\cdots\theta_{i_s}^{j_s}\rs_0(n+1))\cdot [1](\theta^{j_{s+1}}_{i_{s+1}}\cdots\theta_{i_w}^{j_w}\rs_m(n+1))\nonumber\\
&&+
[1](\theta_{i_1}^{j_1}\cdots\theta_{i_s}^{j_s}\rs_0(n+1))\cdot \delta_1(\theta^{j_{s+1}}_{i_{s+1}}\cdots\theta_{i_w}^{j_w}\rs_m(n+1))\nonumber\\
&=&\delta_1(\theta_{i_1}^{j_1}\cdots\theta_{i_s}^{j_s}\rs_0(n+1))\cdot [1](\theta^{j_{s+1}}_{i_{s+1}}\cdots\theta_{i_w}^{j_w}\rs_m(n+1))\nonumber\\
&&+
[1](\theta_{i_1}^{j_1}\cdots\theta_{i_s}^{j_s}\rs_m(n+1))\cdot \delta_1(\theta^{j_{s+1}}_{i_{s+1}}\cdots\theta_{i_w}^{j_w}\rs_0(n+1)),
%&=&\rs_m(n)\cdot\left\{\delta_1(\theta_{l_1}^0\cdots\theta_{l_i}^0\theta_{l_{i+1}}^1\rs_0(n+1))\cdot [1](\theta^1_{l_{i+2}}\cdots\theta_{l_w}^1\rs_0(n+1)))+\right.\nonumber\\
%&&+\left.\sum_{k=i+2}^w[1](\theta_{l_1}^0\cdots\theta_{l_i}^0\theta_{l_{i+1}}^1\cdots \widehat{\theta^1_{l_k}}\cdots\theta_{l_w}^1\rs_0(n+1))\cdot \delta_1(\theta_{l_k}^1\rs_0(n+1))\right\}
\end{eqnarray}
where $s\geq 0$ and $\{\theta_{i_1}^{j_1},\cdots,\theta_{i_w}^{j_w}\}=\{\theta_{l_1}^0,\cdots,\theta_{l_i}^0,\theta_{l_{i+1}}^1,\cdots,\theta_{l_w}^1\}.$

Moreover $[1]\circ\delta_i=\delta_i\circ[1]$ for $i=1,2,3,4.$
\end{lemma}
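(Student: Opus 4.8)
The plan is to unwind the explicit formulas (\ref{del0})--(\ref{del3}) for $\delta_0,\dots,\delta_3$ directly, exploiting the fact that each $\delta_j$ is by construction a universal expression in the combinatorial data $(l_1,\dots,l_w,m,i,w)$ in which the level $n+1$ has been replaced throughout by $n$: every auxiliary class $\theta_t^0,\theta_t^1,\rs_{\bullet}$ that the recursion (\ref{pupu}) introduces already lives at level $n$.

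First I would dispose of $\delta_0$. In (\ref{del0}) the product $\theta_{l_1}^0\cdots\theta_{l_w}^1$ is, by the standing convention, at level $n$, hence equals $[1]$ applied to the corresponding level-$(n+1)$ product; the remaining factor $-d\sum_{t\ge 0}\rs_{m-t-1}(n)\,\theta_t^1$ is exactly $\delta_0(\rs_m(n+1))$, i.e.\ (\ref{del0}) specialised to $w=i=0$. This gives the first identity at once. I would also note here that, since $\rs_{-t-1}(n)=0$, formula (\ref{del0}) shows that $\delta_0$ annihilates any monomial whose $\rs$-slot is $\rs_0(n+1)$; in other words $\delta_0$ only ``sees'' the $\rs$-factor.

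Next, for $\delta_1$, the key is to recognise it as a $[1]$-derivation on the $\theta$-part that ignores the $\rs$-slot. Reading (\ref{del1}) term by term yields
\[\delta_1(b)=\sum_{k=1}^{w}[1]\bigl(\text{$b$ with its $k$-th $\theta$-factor removed}\bigr)\cdot\delta_1\bigl(\theta_{l_k}^{\epsilon_k}\rs_0(n+1)\bigr),\]
where $\epsilon_k\in\{0,1\}$ records the type of the $k$-th factor, $[1]$ is applied in particular to the surviving $\rs_m$, and one checks from (\ref{del1}) with $w=1$ that $\delta_1(\theta_l^1\rs_0(n+1))=-l\sum_t\theta_{l-t-2}^0\theta_t^1$, $\delta_1(\theta_l^0\rs_0(n+1))=-\sum_t\theta_{l-t-2}^0\theta_t^0(t+1)$, and $\delta_1(\rs_m(n+1))=0$. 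Granting this product rule --- together with the multiplicativity of $[1]$ on the products that occur, which is legitimate because at most one factor of each carries a nontrivial $\rs$ --- every displayed form of $\delta_1(b)$ in the lemma follows by partitioning $\{1,\dots,w\}$ into the factors of the first sub-monomial and those of the second and regrouping the sum: the indices $k$ in the first block reassemble $\delta_1$ of that sub-monomial times $[1]$ of the other, and symmetrically for the second block. The one subtlety is that in the last two displayed forms $\rs_m$ may be placed in either sub-monomial; this is harmless because $\delta_1$ never acts on the $\rs$-slot but only transports it through $[1]$, so $\delta_1(\rs_m\cdot x)=\rs_m(n)\,\delta_1(x)$ and the allocation of $\rs_m$ does not affect the total.

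Finally, the commutations $[1]\circ\delta_j=\delta_j\circ[1]$ follow from the ``uniform in $n$'' shape of (\ref{del0})--(\ref{del3}): each summand is a product of classes $\theta_{\bullet}^{\epsilon}(n),\rs_{\bullet}(n)$ whose subscripts depend only on $(l_k,m,t,a)$ and not on $n$, so applying $[1]$ --- which lowers every level by one, pulls $\theta$-factors out of a monomial, and commutes with forming $\rs_m$ --- carries the level-$n$ formula for $\delta_j(b)$ to the level-$(n-1)$ formula for $\delta_j([1]b)$. I expect the genuine content of the lemma to be essentially bookkeeping: matching the several displayed expressions for $\delta_1$ and tracking which $\theta$-factor is hatted and where $\rs_m$ sits. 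The structural statements themselves --- $\delta_0$ is a ``derivative'' in the $\rs$-slot, $\delta_1$ is a $[1]$-derivation in the $\theta$-slots, and $[1]$ is a uniform level shift --- are already forced by (\ref{pupu}).
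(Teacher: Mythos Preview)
Your proposal is correct and follows exactly the route the paper takes. The paper's entire proof consists of the remark that $\rs_0(n)=\mathbf{1}$ is the unit in $A^*(S^{[n]})$ followed by ``the lemma is proved by a direct observation''; you have simply unpacked that observation from the explicit formulas (\ref{del0})--(\ref{del3}), making the derivation structure of $\delta_0$ and $\delta_1$ and the level-uniformity behind $[1]\circ\delta_j=\delta_j\circ[1]$ explicit.
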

\begin{proof}Notice that $\rs_0(n)=\mathbf{1}$ is the unit for the multiplication in $A^*(S^{[n]})$.  The lemma is proved by a direct observation.
\end{proof}

Denote by $(d-3)A^*(S^{[n]})$ the subgroup of $A^*(S^{[n]})$ consisting of elements which are $d-3$ times of some elements in $A^*(S^{[n]})$.
\begin{lemma}\label{comm02}$\delta_2\circ\delta_0(b)-\delta_0\circ\delta_2(b)\in (d-3)A^*(S^{[n-1]})$.
\end{lemma}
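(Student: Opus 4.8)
The plan is to expand both composites via the explicit formulas (\ref{del0}) and (\ref{del2}) and compare them directly. Write $\Phi:=\theta_{l_1}^0\cdots\theta_{l_i}^0\theta_{l_{i+1}}^1\cdots\theta_{l_w}^1$ for the $\theta$-part of $b$, and split $\delta_2=\delta_2'+\delta_2''$, where $\delta_2'$ is the first sum in (\ref{del2}) (the summand carrying the scalar $d$) and $\delta_2''$ is the second sum (carrying $-3$). By (\ref{del0}) and Lemma \ref{lbz}, $\delta_0$ carries the scalar $-d$ and acts by applying $[1]$ to the $\theta$-part while replacing $\rs_m$ by $-d\sum_{t\ge 0}\theta_t^1\,\rs_{m-t-1}$; in particular $\delta_0$ only lowers the index of the $\rs$-factor and appends one new $\theta^1$-factor. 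By contrast $\delta_2$ runs over the $\theta^1$-factors of its argument and, for each chosen one, removes that factor, inserts two $\theta^0$-factors, applies $[1]$ to the rest, and — in the $\delta_2'$-summand only — additionally lowers the index of $\rs$.

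When expanding $\delta_2\delta_0(b)$ I split the outer $\theta^1$-sum of the outer $\delta_2$ into two groups: the terms where $\delta_2$ acts on one of the $w-i$ original factors $\theta_{l_{i+1}}^1,\dots,\theta_{l_w}^1$, and the single term where it acts on the factor $\theta_t^1$ that $\delta_0$ has just created. The main step is to show that the first group equals $\delta_0\delta_2(b)$ term by term. The reason is that on this group $\delta_0$ and $\delta_2$ touch complementary pieces of the monomial: $\delta_0$ only modifies the $\rs$-factor (lowering its index and appending a $\theta^1$), while $\delta_2$, acting on an original $\theta_{l_k}^1$, removes $\theta_{l_k}^1$, inserts two $\theta^0$-factors and at most lowers the $\rs$-index further; since $A^*(S^{[n-1]})$ is commutative and $[1]$ is compatible with the $\delta_j$ (cf.\ Lemma \ref{lbz}), the two composites produce the same sum once the dummy summation variables are matched. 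Concretely, for a fixed original index $k$ the shared data on both sides is: the class $[2]\bigl(\theta_{l_1}^0\cdots\widehat{\theta^1_{l_k}}\cdots\theta_{l_w}^1\bigr)$, the inserted pair $\theta_{l_k-c-2}^0\theta_{a+c}^0$ with weight $(c+1)$ (from $\delta_2'$) or the pair $\theta_{l_k-c-3}^0\theta_c^0$ with weight $\tfrac{(c+1)(c+2)}{2}$ (from $\delta_2''$), the appended factor $\theta_s^1$ (from $\delta_0$), and the class $\rs_{m-a-s-2}$ (respectively $\rs_{m-s-1}$); matching the triple $(a,c,s)$ gives the identity.

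It follows that $\delta_2\delta_0(b)-\delta_0\delta_2(b)$ is precisely the contribution of $\delta_2$ applied to the appended factor $\theta_t^1$, i.e.\ the term of (\ref{del2}) with $l_k$ replaced by $t$ and with $\rs$-index $m-t-1$. A direct substitution shows the $\delta_2'$-part of this contribution is $(-d)\cdot d\cdot X_1=-d^2 X_1$ and the $\delta_2''$-part is $(-d)\cdot(-3)\cdot X_2=3d\,X_2$, where, with the conventions $\theta_j^0=0$ and $\rs_j=0$ for $j<0$ and all classes in $A^*(S^{[n-1]})$,
\[X_1=\sum_{t,a,c\ge 0}(c+1)\,[2]\Phi\cdot\theta_{t-c-2}^0\,\theta_{a+c}^0\,\rs_{m-t-a-2},\qquad X_2=\sum_{t,c\ge 0}\tfrac{(c+1)(c+2)}{2}\,[2]\Phi\cdot\theta_{t-c-3}^0\,\theta_c^0\,\rs_{m-t-1}.\]
I would then reindex each sum by the pair $(p,q)$ of indices of the two inserted $\theta^0$-factors. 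In $X_1$, set $p=t-c-2$, $q=a+c$; the $\rs$-index becomes $m-p-q-4$, independent of $c$, and the residual weight is $\sum_{c=0}^{q}(c+1)=\tfrac{(q+1)(q+2)}{2}$. In $X_2$, set $p=t-c-3$, $q=c$; again the $\rs$-index is $m-p-q-4$, with weight $\tfrac{(q+1)(q+2)}{2}$. Hence $X_1=X_2=:X$, and therefore $\delta_2\delta_0(b)-\delta_0\delta_2(b)=(-d^2+3d)X=-d(d-3)X\in(d-3)A^*(S^{[n-1]})$, as required.

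The hardest part is the bookkeeping in the matching step of the second paragraph: one must track carefully which shift ($[1]$ or $[2]$) is applied to which subproduct and which dummy variable lands in which inserted class, so that the ``original-factor'' parts of $\delta_2\delta_0(b)$ and $\delta_0\delta_2(b)$ cancel exactly and only the clean term $-d(d-3)X$ survives. By comparison the closing reindexing is routine, driven by the elementary identity $\sum_{c=0}^{q}(c+1)=\binom{q+2}{2}$, which is precisely what forces the two residual contributions $X_1,X_2$ to agree and hence makes the commutator divisible by $d-3$.
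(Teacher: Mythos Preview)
Your proof is correct and follows essentially the same route as the paper. Both arguments observe that in $\delta_2\delta_0(b)$ the terms where $\delta_2$ hits one of the original factors $\theta^1_{l_k}$ match $\delta_0\delta_2(b)$ exactly, so the commutator reduces to the single contribution where $\delta_2$ acts on the new $\theta^1_t$ produced by $\delta_0$; then the same triangular reindexing (your $p=t-c-2,\ q=a+c$ versus the paper's $u'=u+t+1,\ a'=a+t$) and the identity $\sum_{c=0}^{q}(c+1)=\binom{q+2}{2}$ show that the $d$-part and the $(-3)$-part of this residual term are equal, yielding the factor $d-3$.
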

\begin{proof}
By (\ref{del0}) and (\ref{del2}) we have
\begin{eqnarray}\label{del02}&&\delta_0\circ\delta_2(b)\nonumber\\
&=&\delta_0\left(\sum_{k=i+1}^w\theta_{l_1}^0\cdots\theta_{l_i}^0\theta_{l_{i+1}}^1\cdots \widehat{\theta^1_{l_k}}\cdots\theta_{l_w}^1\left(d\sum_{\substack{t\geq 0\\ a\geq 0}}\theta_{l_k-t-2}^0\rs_{m-a-1}(n)\theta_{a+t}^0\cdot(t+1)\right)\right.\nonumber\\&+&\left.\sum_{k=i+1}^w\theta_{l_1}^0\cdots\theta_{l_i}^0\theta_{l_{i+1}}^1\cdots \widehat{\theta^1_{l_k}}\cdots\theta_{l_w}^1\rs_m(n)\left(-3\sum_{t\geq 0}\theta_{l_k-t-3}^0\cdot \theta^0_t\frac{(t+1)(t+2)}2\right)\right)\nonumber\\
&=&\sum_{k=i+1}^w\theta_{l_1}^0\cdots\theta_{l_i}^0\theta_{l_{i+1}}^1\cdots \widehat{\theta^1_{l_k}}\cdots\theta_{l_w}^1\left(-d^2\sum_{\substack{t\geq 0\\ a\geq 0\\ u\geq 0}}\theta_u^1\theta_{l_k-t-2}^0\rs_{m-a-u-2}(n-1)\theta_{a+t}^0\cdot(t+1)\right)\nonumber\\&+&\sum_{k=i+1}^w\theta_{l_1}^0\cdots\theta_{l_i}^0\theta_{l_{i+1}}^1\cdots \widehat{\theta^1_{l_k}}\cdots\theta_{l_w}^1\left(3d\sum_{\substack{t\geq 0\\ u\geq0}}\theta_u^1\rs_{m-u-1}(n-1)\theta_{l_k-t-3}^0\cdot \theta^0_t\frac{(t+1)(t+2)}2\right)\nonumber\\
\end{eqnarray}
\begin{eqnarray}\label{del20}&&\delta_2\circ\delta_0(b)\nonumber\\
&=&\delta_2\left(\theta_{l_1}^0\cdots\theta_{l_i}^0\theta_{l_{i+1}}^1\cdots\theta_{l_w}^1\left(-d\sum_{t\geq 0}\rs_{m-t-1}(n)\cdot \theta_t^1\right)\right)\nonumber\\
&=&\sum_{k=i+1}^w\theta_{l_1}^0\cdots\theta_{l_i}^0\theta_{l_{i+1}}^1\cdots \widehat{\theta^1_{l_k}}\cdots\theta_{l_w}^1\left(-d^2\sum_{\substack{t\geq 0\\ a\geq 0\\ u\geq 0}}\theta_u^1\theta_{l_k-t-2}^0\rs_{m-a-u-2}(n)\theta_{a+t}^0\cdot(t+1)\right)\nonumber\\&+&\sum_{k=i+1}^w\theta_{l_1}^0\cdots\theta_{l_i}^0\theta_{l_{i+1}}^1\cdots \widehat{\theta^1_{l_k}}\cdots\theta_{l_w}^1\left(3d\sum_{\substack{t\geq 0\\ u\geq0}}\theta_u^1\rs_{m-u-1}(n)\theta_{l_k-t-3}^0\cdot \theta^0_t\frac{(t+1)(t+2)}2\right)\nonumber\\&-&d\cdot\theta_{l_1}^0\cdots\theta_{l_w}^1\delta_2\left(\sum_{u\geq 0}\theta_u^1\rs_{m-u-1}(n)\right).\nonumber\\
\end{eqnarray}
By (\ref{del02}) and (\ref{del20}) we only need to prove 
\begin{equation}\label{del0220}\delta_2\left(\sum_{u\geq 0}\theta_u^1\rs_{m-u-1}(n)\right)\in (d-3) A^*(S^{[n-1]}).\end{equation}
By (\ref{del2}) we have
\begin{eqnarray}&&\delta_2\left(\sum_{u\geq 0}\theta_u^1\rs_{m-u-1}(n)\right)\nonumber\\&=&d\sum_{\substack{u\geq 0\\a\geq 0\\t\geq0}}\theta^0_{u-a-2}\theta_{a+t}^0(a+1)\rs_{m-u-t-2}(n-1)-3\sum_{\substack{u\geq0\\ a\geq 0}}\theta_{u-a-3}^0\theta_a^0\frac{(a+1)(a+2)}2\rs_{m-u-1}(n-1).\nonumber
\end{eqnarray}
Define $u':=u+t+1$ and $a':=a+t$.  Since $\theta_{k}^i=0$ for $k<0$, we then have 
\footnotesize
\begin{eqnarray}&&\delta_2\left(\sum_{u\geq 0}\theta_u^1\rs_{m-u-1}(n)\right)\nonumber\\&=&d\sum_{\substack{u\geq a+2\\a\geq 0\\t\geq0}}\theta^0_{u-a-2}\theta_{a+t}^0(a+1)\rs_{m-u-t-2}(n-1)-3\sum_{\substack{u\geq a+3\\ a\geq 0}}\theta_{u-a-3}^0\theta_a^0\frac{(a+1)(a+2)}2\rs_{m-u-1}(n-1)\nonumber\\
&=&d\sum_{\substack{u'\geq a'+3\\a'\geq 0\\t\geq0}}\theta^0_{u'-a'-3}\theta_{a'}^0(a'-t+1)\rs_{m-u'-1}(n-1)-3\sum_{\substack{u\geq a+3\\ a\geq 0}}\theta_{u-a-3}^0\theta_a^0\frac{(a+1)(a+2)}2\rs_{m-u-1}(n-1)\nonumber\\
&=&d\sum_{\substack{u'\geq a'+3\\a'\geq 0}}\theta^0_{u'-a'-3}\theta_{a'}^0\left(\sum_{t=0}^{a'}(a'-t+1)\right)\rs_{m-u'-1}(n-1)\nonumber\\ 
&&-3\sum_{\substack{u\geq a+3\\ a\geq 0}}\theta_{u-a-3}^0\theta_a^0\frac{(a+1)(a+2)}2\rs_{m-u-1}(n-1)\nonumber\\
&=&d\sum_{\substack{u'\geq a'+3\\a'\geq 0}}\theta^0_{u'-a'-3}\theta_{a'}^0\frac{(a'+1)(a'+2)}2\rs_{m-u'-1}(n-1)\nonumber\\
&&-3\sum_{\substack{u\geq a+3\\ a\geq 0}}\theta_{u-a-3}^0\theta_a^0\frac{(a+1)(a+2)}2\rs_{m-u-1}(n-1)\nonumber\\
&=&(d-3)\sum_{\substack{u\geq a+3\\ a\geq 0}}\theta_{u-a-3}^0\theta_a^0\frac{(a+1)(a+2)}2\rs_{m-u-1}(n-1).\nonumber
\end{eqnarray}
\normalsize
The lemma is proved.
\end{proof}

Recall that $\rs_i(n):=s_i(\alpha^{[n]})\in A^i(S^{[n]})$ with $\alpha\in K(S)$ the class of the structure sheaf $\mo_C$ of a curve $C\in |dH|$ on $S$. 
Theorem \ref{intro1} can be deduced from the following theorem.
\begin{thm}\label{main}For any $k,m,n\geq1$ we have
\begin{equation}\label{tsum}\sum_{\sum_{l=1}^ki_l=k}\delta_{i_1}\circ\cdots\circ\delta_{i_k}(\rs_m(n))\in (d-3)A^*(S^{[n-k]}).\end{equation}
\end{thm}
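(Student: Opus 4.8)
The plan is to prove (\ref{tsum}) by induction on $k$, after a reduction that puts it in an inducible form. Since $\rs_m(n)\in B(n)^0_0$, every summation range $\sum_{k=i+1}^{w}$, $\sum_{k=1}^{i}$, $\sum_{j<k}$ occurring in (\ref{del1}), (\ref{del2}), (\ref{del3}) is empty, so $\delta_1(\rs_m(n))=\delta_2(\rs_m(n))=\delta_3(\rs_m(n))=0$ and only the terms with $i_k=0$ survive on the left of (\ref{tsum}). Writing $\delta_0$ out by (\ref{del0}) gives $\delta_0(\rs_m(n))=-d\sum_{t\geq 0}\theta_t^1(n-1)\,\rs_{m-t-1}(n-1)$, whence
\[\sum_{\sum_{l=1}^k i_l=k}\delta_{i_1}\circ\cdots\circ\delta_{i_k}(\rs_m(n))=\sum_{\substack{i_1+\cdots+i_{k-1}=k\\ i_l\in\{0,1,2,3\}}}\delta_{i_1}\circ\cdots\circ\delta_{i_{k-1}}\bigl(\delta_0(\rs_m(n))\bigr).\]
So the remaining $k-1$ operators are applied to a class of a very rigid shape: one factor $\theta^1$ whose index is summed in lockstep with the index of $\rs$, carrying the overall coefficient $-d$.

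I would then prove a stronger statement. Call $P\in A^*(S^{[n]})$ \emph{balanced of type $p$} ($p\geq 0$) if it is a $\bz[d]$-linear combination of classes
\[\theta_{a_1}^0(n)\cdots\theta_{a_r}^0(n)\,\sum_{t_1,\dots,t_p\geq 0}\theta_{t_1}^1(n)\cdots\theta_{t_p}^1(n)\,\rs_{M-t_1-\cdots-t_p}(n),\]
i.e. with exactly $p$ factors of type $\theta^1$ whose indices are summed together with that of $\rs$ (and, for the $p\geq 2$ cases below, with coefficients symmetric in $t_1,\dots,t_p$). From (\ref{del0})--(\ref{del3}) and Lemma \ref{lbz} one checks that $\delta_j$ carries balanced classes of type $p$ to balanced classes of type $p+1-j$, and the computation in the proof of Lemma \ref{comm02}, in which the partial sum $\sum_{t=0}^{a}(a-t+1)=\tfrac{(a+1)(a+2)}{2}$ matches the coefficient of the ``$-3$'' part of (\ref{del2}), shows that $\delta_2$ sends any balanced class possessing a $\theta^1$ factor into $(d-3)A^*$. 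The strengthened claim, proved by induction on $k$, is: for every balanced $P$ of type $p$ and every $k\geq 1$,
\[\sum_{\substack{i_1+\cdots+i_k=k+p\\ i_l\in\{0,1,2,3\}}}\delta_{i_1}\circ\cdots\circ\delta_{i_k}(P)\in (d-3)A^*(S^{[n-k]}).\]
Theorem \ref{main} is then the case $P=\delta_0(\rs_m(n))$ (balanced of type $1$) with $k$ replaced by $k-1$, combined with the reduction above.

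For the induction step I would peel off the first-applied operator $\delta_{i_k}$. Since $\delta_{i_k}(P)$ is balanced of type $p+1-i_k$ and $(k-1)+(p+1-i_k)=(k+p)-i_k$, the inductive hypothesis at $k-1$ disposes of the sub-sum $\sum_{i_1+\cdots+i_{k-1}=(k+p)-i_k}$ for each fixed $i_k$. The real content is therefore concentrated in the base case $k=1$, namely $\delta_{p+1}(P)\in(d-3)A^*$ for $P$ balanced of type $p$: for $p=0$ this is $\delta_1(\rs_m(n))=0$, for $p=1$ it is exactly Lemma \ref{comm02}'s computation, but for $p\geq 2$ it is $\delta_{p+1}$ applied to a balanced class, and $\delta_1,\delta_3$ carry no factor of $d$, so divisibility bookkeeping alone cannot suffice. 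This forces one to use the symmetry of the coefficients: the $\delta_2$-free compositions contributing to (\ref{tsum}) must cancel \emph{identically}. Concretely one is led to verify ``Jacobi-type'' identities such as $\delta_3\delta_1\delta_0+\delta_1\delta_3\delta_0+\delta_3\delta_0\delta_1=0$ on symmetric balanced classes of type $1$ (these being the three orderings of $\delta_0,\delta_1,\delta_3$ that are not automatically zero there), together with their many-operator analogues.

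The step I expect to be the main obstacle is exactly this last one: organizing the large number of compositions in (\ref{tsum}) so that every composition containing at least one $\delta_2$ produces a clean factor $d-3$ via the mechanism of Lemma \ref{comm02}, while the compositions built only from $\delta_0,\delta_1,\delta_3$ sum to zero on symmetric balanced classes. The identities in question are ultimately statements about the generating functions $\sum_{t\geq 0}(t+1)x^t=(1-x)^{-2}$ and $\sum_{t\geq 0}\tfrac{(t+1)(t+2)}{2}x^t=(1-x)^{-3}$ that underlie the coefficients in $\Gamma_i^1(n)$ and $\Gamma_i^2(n)$ of Proposition \ref{rrth}, and I expect \S4.1 and \S4.2 to be devoted precisely to establishing them and to the ensuing reorganization of (\ref{tsum}).
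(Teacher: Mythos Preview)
Your induction scheme has a genuine structural gap. The class of ``balanced'' elements is \emph{not} stable under the $\delta_j$, so the inductive hypothesis cannot be reapplied after peeling off $\delta_{i_k}$. For $P=\sum_{t\ge0}\theta^1_t\rs_{M-t}(n)$ (balanced of type $1$), formula (\ref{del1}) gives
\[
\delta_1(P)=-\sum_{a,t\ge0}(a+t+2)\,\theta^0_a\theta^1_t\,\rs_{M-a-2-t}(n-1),
\]
whose coefficient depends on the $\theta^1$-index $t$; this is not of your balanced shape, and widening the definition to allow polynomial weights in the $t_i$ destroys the exact matching $\sum_{t=0}^{a}(a-t+1)=\tfrac{(a+1)(a+2)}{2}$ that makes the $(d-3)$ factor appear in Lemma \ref{comm02}. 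So the sentence ``the inductive hypothesis at $k-1$ disposes of the sub-sum for each fixed $i_k$'' fails, and the claimed reduction to the base case $k=1$ collapses. Worse, your strengthened statement is already false at that base: for $p=2$ and $P=\sum_{t_1,t_2}\theta^1_{t_1}\theta^1_{t_2}\rs_{M-t_1-t_2}(n)$, formula (\ref{del3}) gives
\[
\delta_3(P)=\sum_{t_1,t_2}\sum_{s,a\ge0}(s{+}1)(a{+}1)\,\theta^0_{t_1-s-2}\theta^0_{t_2-a-2}\theta^0_{s+a}\,\rs_{M-t_1-t_2}(n-1),
\]
which contains no $d$ and is nonzero (the term $t_1=t_2=2$, $s=a=0$ contributes $\rs_{M-4}(n-1)$). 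Thus $\delta_3(P)\notin(d-3)A^*$, and no identity in $\delta_0,\delta_1,\delta_3$ alone will rescue this.

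The paper's argument is organized quite differently and avoids any such invariant subspace. One peels off the \emph{leftmost} operator $\delta_{i_1}$: the $\delta_0$-branch vanishes by Remark \ref{lag}, the $\delta_1$-branch falls to the inductive hypothesis on $k$, and the essential point is that the $\delta_2$- and $\delta_3$-branches are handled \emph{jointly} (Proposition \ref{midmain}), never separately. That joint statement is first reduced to inner indices in $\{0,1\}$ (Proposition \ref{simain}), then rewritten via iterated commutators $Ad_{\delta_0},Ad_{\delta_1}$ (Lemma \ref{cad}) until it becomes the assertion that the $0$-operator $\Xi(k)$ of Corollary \ref{oper} sends every $\rs_m(n)$ into $(d-3)A^*$; this is Proposition \ref{vkmain}, which in turn is an inductive repackaging of the single explicit computation Proposition \ref{wkmain}, whose heart is the combinatorial identity Lemma \ref{combi2}. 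Throughout, the factor $(d-3)$ arises only from a $\delta_2$/$\delta_3$ pairing of the type already visible in Lemma \ref{comm02}, never from cancellations among $\delta_0,\delta_1,\delta_3$ as your final paragraph anticipates.
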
  
\begin{rem}\label{lag}Since $\rs_m(n)\in B(n)_0^0$ and $B(k)_i^w=\{0\}$ for any $i>w$, we have 
\[\delta_{i_1}\circ\cdots\circ\delta_{i_k}(\rs_m(n))=0\]
for any $\sum_{l=1}^ki_l>k$.
\end{rem}
We will prove Theorem \ref{main} in the next section and we now use it to prove Theorem \ref{intro1}.
\begin{proof}[Proof of Theorem \ref{intro1}]Since $\psi:S^{[n,n+1]}\ra S^{[n+1]}$ is generically of degree $n+1$,  we have 
\[f(\rs_{2(n+1)}(n+1))=\phi_*\psi^*(\rs_{2(n+1)}(n+1))=(n+1)\rs_{2(n+1)}(n+1),\]
where we use the same letter for the classes in $A^{2n}(S^{[n]})$ and their image under some fixed isomorphism $A^{2n}(S^{[n]})\cong \bz$ as we have stated in \S \ref{NC}.  Therefore 
\[f^{n+1}(\rs_{2(n+1)}(n+1))=(n+1)!~\rs_{2(n+1)}(n+1).\]
Since $-K_S=3H$, it is enough to show $f^{n+1}(\rs_{2(n+1)}(n+1))\in A^*(S^{[0]})$ is zero for $d=3$.

Because $f=\sum_{j=0}^3\delta_j$, we have
\[f^{n+1}(\rs_{2(n+1)}(n+1))=\sum_{i_1,\cdots,i_{n+1}\in\{0,1,2,3\}}\delta_{i_1}\circ\cdots\circ \delta_{i_{n+1}}(\rs_{2(n+1)}(n+1)).\]
Notice that $\rs_{2(n+1)}(n+1)\in B(n+1)^0_0$.  By the definition of $\delta_j$, we have 
\[\delta_{i_1}\circ\cdots\circ \delta_{i_{n+1}}(\rs_{2(n+1)}(n+1))\in B(0)^{n+1}_{n'},\]
with $n'=\sum_{l=1}^{n+1} i_{l}.$

By Remark \ref{num} (2), we have 
\[f^{n+1}(\rs_{2(n+1)}(n+1))=\sum_{\sum_{l=1}^{n+1}i_l=n+1}\delta_{i_1}\circ\cdots\circ\delta_{i_{n+1}}(\rs_{2(n+1)}(n+1)).\]
By Theorem \ref{main} we have $f^{n+1}(\rs_{2(n+1)}(n+1))=0$ for $d=3$.  The theorem is proved.
\end{proof}

\section{Proof of the main theorem.} %%%%%%%. Section 4  %%%%%%%%
Theorem \ref{main} will be proved by doing induction on $k$.
From now on for simplicity we write $\delta_{i_i}\delta_{i_2}=\delta_{i_1}\circ\delta_{i_2}$ and $(\delta_{i})^k=\underbrace{\delta_i\circ\cdots\circ\delta_i}_{k~times}$.    

\subsection{A special part of the sum.}
In this subsection we show that a special part of the sum in (\ref{tsum}) belongs to $(d-3)A^*(S^{[n]})$ as stated by the following proposition.  
\begin{prop}\label{wkmain}For any $k,m,n\geq 1$ we have
\[\left(\delta_2(\delta_1)^k\delta_0+\sum_{s=0}^{k-1}\delta_3(\delta_1)^{k-s-1}\delta_0(\delta_1)^{s}\delta_0\right)(\rs_m(n))\in (d-3)A^*(S^{[n-k-2]}).\]
\end{prop}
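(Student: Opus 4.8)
The plan is to argue by induction on $k$, after reducing the statement modulo divisibility by $d-3$. Treating $d$ as a formal variable, the classes $\rs_m(n)$ and $\theta_i^j(n)$ and the operators $\delta_0,\delta_1,\delta_2,\delta_3$ all live over $\bz[d]$, and since every $\delta_j$ is $\bz[d]$-linear it preserves the ideal $(d-3)$; so, writing $\Phi_k:=\delta_2(\delta_1)^k\delta_0+\sum_{s=0}^{k-1}\delta_3(\delta_1)^{k-s-1}\delta_0(\delta_1)^{s}\delta_0$, the proposition is the claim that $\Phi_k(\rs_m(n))$ vanishes at $d=3$. By (\ref{del0}) and (\ref{del2}) one has $\delta_0=-dC$ and $\delta_2=dA-3B$ for certain $d$-free operators $A,B,C$, while $\delta_1,\delta_3$ are already $d$-free; hence at $d=3$ the operator $\Phi_k$ becomes $9$ times the $d$-free operator
\[\Psi_k:=\sum_{s=0}^{k-1}\delta_3(\delta_1)^{k-s-1}C(\delta_1)^{s}C-(A-B)(\delta_1)^{k}C,\]
and it suffices to prove $\Psi_k(\rs_m(n))=0$ for all $k\ge 0$.

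For the base case $k=0$ the sum is empty, so $\Psi_0=-(A-B)C$. Now $C(\rs_m(n))=\sum_{t\ge 0}\theta_t^1\,\rs_{m-t-1}$ by (\ref{del0}), and the computation (\ref{del0220}) carried out in the proof of Lemma \ref{comm02} says exactly that $A$ and $B$ agree on classes of this form; hence $\Psi_0(\rs_m(n))=0$. (Equivalently: $\rs_m(n)\in B(n)^0_0$ has no factor $\theta^1$, so $\delta_2(\rs_m(n))=0$ and therefore $\Phi_0(\rs_m(n))=\delta_2\delta_0(\rs_m(n))-\delta_0\delta_2(\rs_m(n))\in(d-3)A^*(S^{[n-2]})$ by Lemma \ref{comm02}.)

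For the inductive step I would compare $\Psi_k$ with $\delta_1\circ\Psi_{k-1}$. The relevant tools are in Lemma \ref{lbz}: the twisted Leibniz rule $\delta_1(xy)=\delta_1(x)\,[1](y)+[1](x)\,\delta_1(y)$, the relations $[1]\circ\delta_j=\delta_j\circ[1]$, and the vanishing $\delta_1(\rs_m(n))=0$, which together let one push an extra $\delta_1$ past the strings $(\delta_1)^{\bullet}$ and re-index the $\delta_3$-sum. A direct rearrangement then yields, purely at the level of operator algebra,
\[\Psi_k-\delta_1\circ\Psi_{k-1}=-\bigl((A-B)\delta_1-\delta_1(A-B)\bigr)(\delta_1)^{k-1}C+\delta_3C(\delta_1)^{k-1}C-\sum_{s=0}^{k-2}\bigl(\delta_1\delta_3-\delta_3\delta_1\bigr)(\delta_1)^{k-s-2}C(\delta_1)^{s}C.\]
Granting the induction hypothesis $\Psi_{k-1}(\rs_m(n))=0$, everything reduces to showing the right-hand side annihilates $\rs_m(n)$; the natural route is to prove the $d$-free operator identity $(A-B)\delta_1-\delta_1(A-B)=\delta_3\circ C$, so that the first two terms cancel, together with the vanishing of the last sum on the inputs that actually occur, namely the classes $(\delta_1)^{\bullet}C(\delta_1)^{\bullet}C(\rs_m(n))$, which carry exactly two $\theta^1$-factors, one produced by each $C$ — this is what makes the $\delta_1\delta_3-\delta_3\delta_1$ contributions tractable.

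The hard part — as the paper itself warns, the computation here is technical — will be precisely this bundle of commutation identities. As in the proof of Lemma \ref{comm02}, verifying them comes down to expanding deeply nested sums of products $\theta_{\bullet}^0\cdots\theta_{\bullet}^0\theta_{\bullet}^1\cdots\theta_{\bullet}^1\rs_{\bullet}$ with polynomial coefficients and then performing several careful re-indexations, the decisive numerical ingredient being identities such as $\sum_{t=0}^{a}(a-t+1)=\frac{(a+1)(a+2)}{2}$. Keeping precise track of which $\theta$-factor each of $\delta_1,\delta_2,\delta_3$ acts on, isolating the overlap contributions, and confirming that everything not visibly a multiple of $d-3$ cancels is where the real effort goes; once these relations are established, the induction closes at once.
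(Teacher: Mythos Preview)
Your route is genuinely different from the paper's. The paper does \emph{not} induct on $k$ at the level of $\Phi_k$. Instead it first proves closed formulas for $(\delta_1)^k\delta_0(\rs_m(n))$ (Lemma~\ref{del21S}) and for $\sum_{s}(\delta_1)^{k-1-s}\delta_0(\delta_1)^s\delta_0(\rs_m(n))$ (Lemma~\ref{del31S}, with the binomial collapse of Lemma~\ref{combi}), applies $\delta_2$ and $\delta_3$ to these explicit expressions, and then uses a separate combinatorial identity (Lemma~\ref{combi2}, itself proved by a double induction on $N$ and $k$) to merge the two pieces into a single product. After that, the change of variables $a'=a+t+1$, $u'=u+t$ together with $\sum_{u=0}^{u'}(u+1)=\tfrac{(u'+1)(u'+2)}{2}$ exhibits the factor $(d-3)$ in one stroke. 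So the paper front-loads all the work into closed forms and then finishes with a single explicit comparison; your plan instead tries to carry the $(d-3)$-divisibility recursively.

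Your outline has a real gap, and it sits exactly where you suspect. The two statements you isolate --- the operator identity $[A-B,\delta_1]=\delta_3\circ C$, and the vanishing of $\sum_{s=0}^{k-2}[\delta_1,\delta_3](\delta_1)^{k-s-2}C(\delta_1)^sC$ on $\rs_m(n)$ --- are where the entire combinatorics lives, and neither is established. For the first, both sides have the right ``shape'' (each removes one $\theta^1$ net), but matching the numerical coefficients in the nested sums is precisely the content packaged by the paper as Lemma~\ref{combi2}; I see no shortcut. For the second, $[\delta_1,\delta_3]$ does \emph{not} vanish termwise on elements carrying two $\theta^1$-factors, so any cancellation must come from the sum over $s$ and from the specific symmetric form of $(\delta_1)^{\bullet}C(\delta_1)^{\bullet}C(\rs_m(n))$; verifying this is again a combinatorial identity of the same weight. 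In short, your induction does not sidestep the hard computation --- it relocates it into the inductive step, where you would end up re-proving the substance of Lemmas~\ref{del21S}--\ref{combi2} in commutator language. The base case $k=0$ is fine (and matches Remark~\ref{k=0}), but as written the inductive step is a plan, not a proof.
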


We need several lemmas to prove Proposition \ref{wkmain}.  Recall that we have made the convention that $\prod_{j=0}^k P_j=1$ for $k=-1$.  Define $\theta^0_{\vec{a},l}:=\theta^0_{a_1}\cdots\theta^0_{a_l}$.  %Then we have the following lemma.

\begin{lemma}\label{del21S}For any $k\geq 0$ we have $(\delta_1)^k\delta_0(\rs_m(n))=$
\[(-1)^{k+1}d\sum_{\substack{m'+a+2k+1\\+\sum_{i=1}^ka_i=m}}\left\{\prod_{j=0}^{k-1}\left(j+a+1+\sum_{i=1}^k (a_i+1)\right)\right\}\theta_{\vec{a},k}^0\theta_a^1\rs_{m'}(n-k-1).\]
\end{lemma}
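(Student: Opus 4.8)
The plan is to prove Lemma \ref{del21S} by induction on $k$, using the explicit formulas \eqref{del0} and \eqref{del1} for $\delta_0$ and $\delta_1$ together with the substitutions $p_*(s_t(\mi_n)\cdot q^*H)=\theta_t^1$, $p_*(s_t(\mi_n)\cdot q^*x)=\theta_t^0$ already recorded in the proof of Lemma \ref{fbim}. For the base case $k=0$, the claimed formula reads $\delta_0(\rs_m(n)) = -d\sum_{m'+a+1=m}\theta_a^1\rs_{m'}(n-1)$ (the empty product $\prod_{j=0}^{-1}$ equals $1$ and there are no $a_i$), which is exactly \eqref{del0} applied to $b=\rs_m(n)\in B(n)^0_0$ after reindexing $t=a$, $m-t-1=m'$.

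For the inductive step, I would apply $\delta_1$ to the expression for $(\delta_1)^{k-1}\delta_0(\rs_m(n))$. Each summand there is of the form $\theta_{\vec{a},k-1}^0\theta_a^1\rs_{m'}(n-k)$, i.e. an element of $B(n-k)^k_{k-1}$ with the $k-1$ factors $\theta^0$ and one factor $\theta^1$. Formula \eqref{del1} has two sums: the first runs over the $\theta^1$-factors (here just the single $\theta_a^1$) and contributes $-a\sum_t \theta_{a-t-2}^0\theta_t^1$ in place of $\theta_a^1$; the second runs over the $\theta^0$-factors (here the $k-1$ factors $\theta^0_{a_j}$) and contributes $-\sum_t \theta_{a_j-t-2}^0\theta_t^0(t+1)$ in place of $\theta^0_{a_j}$. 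After applying $\delta_1$ and carefully performing the index substitutions — absorbing the new $\theta^0$ into the $\vec{a}$-list and relabelling the summation variables so that each new summand again has $k$ factors $\theta^0$, one factor $\theta^1$, and $\rs_{m''}(n-k-1)$ — one collects the combinatorial coefficient. The key computation is to check that summing over the freed index reproduces the product $\prod_{j=0}^{k-1}\bigl(j+a+1+\sum_{i=1}^k(a_i+1)\bigr)$: the new linear factor $\bigl(k-1+a+1+\sum_i(a_i+1)\bigr)$ is produced by combining the coefficient $-a$ (or $-(t+1)$) from \eqref{del1} with the shift in the remaining variables, exactly as in the telescoping identity $\sum_{t=0}^{a'}(a'-t+1)=\frac{(a'+1)(a'+2)}2$ used in Lemma \ref{comm02}.

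The main obstacle I expect is bookkeeping: keeping track of which summation variable is being split, how the total degree-weight $m$ is partitioned among $m'$, $a$, the $a_i$, and the constants $2k+1$, and making sure the symmetry among the $k$ copies of $\theta^0$ is handled correctly (the $k-1$ old ones play a slightly different role from the one newly created, but after symmetrization over the $\vec a$-list they are interchangeable, which is why the coefficient depends only on the multiset $\{a_1,\dots,a_k\}$ and not on their order). Once the index substitution $u'=u+t+1$, $a'=a+t$ type of change of variables is set up as in Lemma \ref{comm02}, the coefficient identity becomes a routine finite-sum evaluation, so I would only sketch it and leave the verification to the reader. A secondary point worth a remark is the sign: each application of $\delta_1$ contributes an overall $(-1)$, giving $(-1)^k$, and $\delta_0$ contributes the remaining $(-1)$, accounting for the $(-1)^{k+1}$ in the statement.
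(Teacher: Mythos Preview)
Your proposal is correct and follows the same route as the paper: induction on $k$, base case from \eqref{del0}, inductive step by applying \eqref{del1} term by term and reindexing so that the newly created $\theta^0$ joins the list $\vec a$. One small slip worth fixing when you write it out: the new linear factor is the $j=0$ term $\bigl(a+1+\sum_{i=1}^k(a_i+1)\bigr)$, not the $j=k-1$ term. After the substitutions (e.g.\ $a'=t$, $a'_k=a-t-2$ in the $\theta^1$-branch, and $a'_l=t$, $a'_k=a_l-t-2$ in the $\theta^0_{a_l}$-branch), the old product $\prod_{j=0}^{k-2}$ shifts up to $\prod_{j=1}^{k-1}$ in the new variables, and the coefficients $a$ and $(t+1)$ coming from \eqref{del1}, once rewritten as $a'+a'_k+2$ and $a'_l+1$ respectively, add up directly to the missing $j=0$ factor; no identity of the Lemma~\ref{comm02} type is actually needed here.
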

\begin{proof}We prove the lemma by doing induction on $k$.  Let $k=0$.  Since $\prod_{j=0}^{k-1}\left(j+a+1+\sum_{i=1}^k (a_i+1)\right)=1$ for $k-1<0$, we get the lemma directly from (\ref{del0}).  

Assume the lemma is true for $k\leq k_0$, then by (\ref{del1}) we have
\footnotesize
\begin{eqnarray}\label{del211}&&\frac{(-1)^{k_0+2}}d(\delta_1)^{k_0+1}\delta_0(\rs_m(n))=\frac{(-1)^{k_0+2}}d\delta_1((\delta_1)^{k_0}\delta_0(\rs_m(n)))\nonumber\\
&=&\delta_1\left(-\sum_{\substack{m'+a+2k_0+1\\+\sum_{i=1}^{k_0}a_i=m}}\left\{\prod_{j=0}^{k_0-1}\left(j+a+1+\sum_{i=1}^{k_0} (a_i+1)\right)\right\}\theta_{\vec{a},k_0}^0\theta_a^1\rs_{m'}(n-k_0-1)\right)\nonumber\\
&=&-\sum_{\substack{m'+a+2k_0+1\\+\sum_{i=1}^{k_0}a_i=m}}\left\{\prod_{j=0}^{k_0-1}\left(j+a+1+\sum_{i=1}^{k_0} (a_i+1)\right)\right\}\delta_1(\theta_{\vec{a},k_0}^0\theta_a^1\rs_{m'}(n-k_0-1))
\nonumber\\
&=&\sum_{\substack{m'+a+2k_0+1\\+\sum_{i=1}^{k_0}a_i=m}}\left\{\prod_{j=0}^{k_0-1}\left(j+a+1+\sum_{i=1}^{k_0} (a_i+1)\right)\right\}\cdot\rs_{m'}(n-k_0-2)\times\nonumber\\
&&\left\{\theta_{\vec{a},k_0}^0(a\sum_{t\geq0}\theta_{a-t-2}^0\theta_t^1)+\sum_{l=1}^{k_0}\theta_{a_1}^0\cdots\widehat{\theta_{a_l}^0}\cdots\theta_{a_{k_0}}^0\theta_{a}^1(\sum_{t\geq0}\theta_{a_l-t-2}^0\theta_t^0(t+1))\right\}\nonumber\\
\end{eqnarray}
\normalsize
By defining $a'=t$, $a'_i=a_i,~i=1,\cdots,k_0$ and $a'_{k_0+1}=a-t-2$, we have
\footnotesize
\begin{eqnarray}\label{del212}&&\sum_{\substack{m'+a+2k_0+1\\+\sum_{i=1}^{k_0}a_i=m}}\left\{\prod_{j=0}^{k_0-1}\left(j+a+1+\sum_{i=1}^{k_0} (a_i+1)\right)\right\}
\theta_{\vec{a},k_0}^0\rs_{m'}(n-k_0-2)(a\sum_{t\geq0}\theta_{a-t-2}^0\theta_t^1)\nonumber\\
&=&\sum_{\substack{m'+a'+2k_0+3\\+\sum_{i=1}^{k_0+1}a'_i=m}}\left\{\prod_{j=1}^{k_0}\left(j+a'+1+\sum_{i=1}^{k_0+1} (a'_i+1)\right)\right\}
\theta_{\vec{a}',k_0+1}^0\theta_{a'}^1(a'+a'_{k_0+1}+2)\rs_{m'}(n-k_0-2)\nonumber\\
\end{eqnarray}
\normalsize
By defining $a'=a$, $a'_l=t$, $a'_i=a_i,1\leq i\leq k_0,~i\neq l$ and $a'_{k_0+1}=a_l-t-2$, we have
\footnotesize
\begin{eqnarray}\label{del213}&&\sum_{\substack{m'+a+2k_0+1\\+\sum_{i=1}^{k_0}a_i=m}}\left\{\prod_{j=0}^{k_0-1}\left(j+a+1+\sum_{i=1}^{k_0} (a_i+1)\right)\right\}
\theta_{a_1}^0\cdots\widehat{\theta_{a_l}^0}\cdots\theta_{a_{k_0}}^0\theta_{a}^1(\sum_{t\geq0}\theta_{a_l-t-2}^0\theta_t^0(t+1))\nonumber\\
&=&\sum_{\substack{m'+a'+2k_0+3\\+\sum_{i=1}^{k_0+1}a'_i=m}}\left\{\prod_{j=1}^{k_0}\left(j+a'+1+\sum_{i=1}^{k_0+1} (a'_i+1)\right)\right\}
\theta_{\vec{a}',k_0+1}^0\theta_{a'}^1(a'_l+1)\rs_{m'}(n-k_0-2)\nonumber\\
\end{eqnarray}
\normalsize
Combine (\ref{del211}), (\ref{del212}) and (\ref{del213}) and we have
\footnotesize
\begin{eqnarray}\label{del211}&&\frac{(-1)^{k_0+2}}d(\delta_1)^{k_0+1}\delta_0(\rs_m(n))=\nonumber\\
&=&\sum_{\substack{m'+a'+2k_0+3\\+\sum_{i=1}^{k_0+1}a'_i=m}}\left\{\prod_{j=1}^{k_0}\left(j+a'+1+\sum_{i=1}^{k_0+1} (a'_i+1)\right)\right\}
\theta_{\vec{a}',k_0+1}^0\theta_{a'}^1\rs_{m'}(n-k_0-2)\times\nonumber\\
&&(a'+1+\sum_{i=1}^{k_0+1}(a'_i+1))\nonumber\\
&=&\sum_{\substack{m'+a'+2k_0+3\\+\sum_{i=1}^{k_0+1}a'_i=m}}\left\{\prod_{j=0}^{k_0}\left(j+a'+1+\sum_{i=1}^{k_0+1} (a'_i+1)\right)\right\}
\theta_{\vec{a}',k_0+1}^0\theta_{a'}^1\rs_{m'}(n-k_0-2)\nonumber
\end{eqnarray}
\normalsize
The lemma is proved.
\end{proof}
\begin{lemma}\label{del31S}For $k\geq1$ and $0\leq s\leq k$, we have
\begin{eqnarray}\label{del101s}&&(\delta_1)^{k-1-s}\delta_0(\delta_1)^s\delta_0(\rs_m(n))\nonumber\\&=&(-1)^{k+1}d^2\sum_{\substack{m'+a+b+2\\+2(k-1)+\sum_{l=1}^{s+i}a_l\\+\sum_{l=1}^{k-s-i-1}b_l=m}}\sum_{i=0}^{k-s-1}\left\{\binom{k-s-1}{i}\times\right.\nonumber\\
&&\times\left(\prod_{j=0}^{k-s-i-2}\left(j+b+1+\sum_{i=1}^{k-s-i-1} (b_i+1)\right)\theta_{\vec{b},k-i-s-1}^0\theta_b^1\right)\nonumber\\
&&\times\left.\left(\prod_{j=0}^{i+s-1}\left(j+a+1+\sum_{i=1}^{s+i} (a_i+1)\right)\theta_{\vec{a},i+s}^0\theta_a^1\right)\right\}\rs_{m'}(n-k-1).\nonumber\\
\end{eqnarray}
In particular 
\begin{eqnarray}\label{del101a}&&\sum_{s=0}^{k-1}(\delta_1)^{k-1-s}\delta_0(\delta_1)^s\delta_0(\rs_m(n))\nonumber\\&=&(-1)^{k+1}d^2\sum_{M=0}^{k-1}\sum_{\substack{m'+a+b+2\\+2(k-1)+\sum_{l=1}^{M}a_l\\+\sum_{l=1}^{k-M-1}b_l=m}}\binom{k}{M}\rs_{m'}(n-k-1)\times\nonumber \\
&&\left\{\left(\prod_{j=0}^{k-M-2}\left(j+b+1+\sum_{i=1}^{k-M-1} (b_i+1)\right)\theta_{\vec{b},k-M-1}^0\theta_b^1\right)\right.\nonumber\\
&&\times\left.\left(\prod_{j=0}^{M-1}\left(j+a+1+\sum_{i=1}^{M} (a_i+1)\right)\theta_{\vec{a},M}^0\theta_a^1\right)\right\}.\nonumber\\
\end{eqnarray}

\end{lemma}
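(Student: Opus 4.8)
The plan is to evaluate $(\delta_1)^{k-1-s}\delta_0(\delta_1)^s\delta_0(\rs_m(n))$ by peeling off the operators from the inside out, using Lemma \ref{del21S} for the two occurrences of $\delta_0$ and the Leibniz‑type identities of Lemma \ref{lbz} for the outer string $(\delta_1)^{k-1-s}$.

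First, $(\delta_1)^s\delta_0(\rs_m(n))$ is exactly Lemma \ref{del21S} with $k$ replaced by $s$: it is $(-1)^{s+1}d$ times a sum of terms $\theta_{\vec a,s}^0\theta_a^1\rs_{m'}(n-s-1)$ carrying the coefficient $\prod_{j=0}^{s-1}\bigl(j+a+1+\sum_{i=1}^{s}(a_i+1)\bigr)$. I would then apply the second $\delta_0$ termwise. By Lemma \ref{lbz} the operator $\delta_0$ factors through the $\rs$‑factor, $\delta_0(\theta_{\vec a,s}^0\theta_a^1\rs_{m'}(n-s-1))=[1](\theta_{\vec a,s}^0\theta_a^1)\cdot\delta_0(\rs_{m'}(n-s-1))$, and by (\ref{del0}) the last factor equals $-d\sum_{b\geq0}\theta_b^1\rs_{m'-b-1}(n-s-2)$. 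Hence $\delta_0(\delta_1)^s\delta_0(\rs_m(n))$ is, up to the sign $(-1)^{s}$ and the factor $d^2$, a sum of products of two ``blocks'' — a block $A=\theta_{\vec a,s}^0\theta_a^1$ produced by $(\delta_1)^s\delta_0$ and a block $B=\theta_b^1$ produced by the second $\delta_0$ — times $\rs_{m'}$, with all $\theta$‑classes now on $S^{[n-s-2]}$.

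The remaining step is to apply $(\delta_1)^{k-1-s}$ to each product $A\cdot B\cdot\rs_{m'}$. Since $\delta_1$ obeys the Leibniz rule of Lemma \ref{lbz} and commutes with $[1]$, the $k-1-s$ copies of $\delta_1$ distribute over the two blocks; assigning $i$ of them to $A$ and $k-1-s-i$ to $B$ in $\binom{k-1-s}{i}$ interleavings gives, up to $[1]$‑shifts and with the $\rs_{m'}$‑factor attached to one of the two groups,
\[(\delta_1)^{k-1-s}(A\cdot B\cdot\rs_{m'})=\sum_{i=0}^{k-1-s}\binom{k-1-s}{i}\,(\delta_1)^{i}(A)\cdot(\delta_1)^{k-1-s-i}(B\cdot\rs_{m'}).\]
Each factor is then evaluated by the very recursion that drives the proof of Lemma \ref{del21S}: applying $\delta_1$ once to a block $\theta_{c_1}^0\cdots\theta_{c_l}^0\theta_c^1$ with $l$ lower $\theta^0$‑entries and one $\theta^1$‑entry produces a sum of blocks with $l+1$ lower $\theta^0$‑entries and one $\theta^1$‑entry — the two contributions being the ``$-l_k$''‑term and the ``$-(t+1)$''‑term of (\ref{del1}), exactly as in (\ref{del212})--(\ref{del213}) — and iterating $j$ times yields precisely the product of linear factors $\prod_{j'=0}^{l+j-1}(\cdots)$ appearing in (\ref{del101s}). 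The net level drop is $1+s+1+(k-1-s)=k+1$, so every class lands on $S^{[n-k-1]}$, and the sign count $(-1)^{s}d^2\cdot(-1)^{i}(-1)^{k-1-s-i}=(-1)^{k+1}d^2$ reproduces the prefactor of (\ref{del101s}). For (\ref{del101a}) one then sums (\ref{del101s}) over $s=0,\dots,k-1$: in the $(s,i)$‑term the first block has $k-i-s-1$ entries $\theta^0$ and the second has $i+s$, so collecting terms according to $M:=i+s\in\{0,\dots,k-1\}$ leaves the combined coefficient $\sum_{s=0}^{M}\binom{k-s-1}{M-s}=\binom{k}{M}$ by the hockey‑stick identity, which is (\ref{del101a}).

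The hard part will be the index and coefficient bookkeeping inside the two block evaluations: one must verify that, after all the $[1]$‑shifts of Lemma \ref{lbz} are pushed through (using $[1]\circ\delta_i=\delta_i\circ[1]$), each $\theta$‑class genuinely ends up on $S^{[n-k-1]}$, and that repeated use of (\ref{del1}) on a block starting from an arbitrary $\theta_{\vec c,l}^0\theta_c^1$ — rather than from a single $\theta^1$ as in Lemma \ref{del21S} — still collapses to the clean product $\prod_{j'}\bigl(j'+c+1+\sum(c_i+1)\bigr)$. This is the computation already performed in the proof of Lemma \ref{del21S}, now run with a general initial block, combined with the Pascal/hockey‑stick manipulations of the binomial coefficients.
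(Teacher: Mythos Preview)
Your proposal is correct and follows essentially the same route as the paper: apply Lemma \ref{del21S} for the inner $(\delta_1)^s\delta_0$, use Lemma \ref{lbz} to factor the second $\delta_0$ through the $\rs$-part, then distribute the remaining $(\delta_1)^{k-1-s}$ over the two blocks via the Leibniz rule and iterate, finishing with the hockey-stick identity $\sum_{s+i=M}\binom{k-s-1}{i}=\binom{k}{M}$ (this is the paper's Lemma \ref{combi}).

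The one difference worth noting is the paper's shortcut for what you call the ``hard part''. Rather than expanding the $A$-block as $\theta_{\vec a,s}^0\theta_a^1$ and then re-running the $\delta_1$-recursion on a general initial block, the paper first observes the rewriting
\[
\delta_0(\delta_1)^s\delta_0(\rs_m(n))\;=\;-d\sum_{b+1+m''=m}\theta_b^1\cdot[1]\bigl((\delta_1)^s\delta_0(\rs_{m''}(n))\bigr),
\]
i.e.\ it keeps the $A$-block packaged as an \emph{un-evaluated} $(\delta_1)^s\delta_0(\rs_{m''})$. When Leibniz sends $i$ further copies of $\delta_1$ to this factor, the result is literally $(\delta_1)^{i+s}\delta_0(\rs_{m''})$, which is Lemma \ref{del21S} on the nose; the other factor is $(\delta_1)^{k-s-i-1}(\theta_b^1\rs_0)$, which is again the same lemma (its $k=0$ starting point). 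This sidesteps entirely the verification you flag---that the $\delta_1$-recursion on an arbitrary initial block $\theta_{\vec c,l}^0\theta_c^1$ still collapses to the closed product---by reducing both blocks to direct invocations of Lemma \ref{del21S}. Your approach is valid, but this repackaging is cleaner and worth adopting.
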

\begin{proof}By Lemma \ref{del21S} and (\ref{del0}) we have
\begin{eqnarray}\label{add1}&&\delta_0(\delta_1)^s\delta_0(\rs_m(n))\nonumber\\
&=&(-1)^{s+1}d\sum_{\substack{\widetilde{m}+a+2s+1\\+\sum_{i=1}^ka_i=m}}\left\{\prod_{j=0}^{s-1}\left(j+a+1+\sum_{i=1}^s (a_i+1)\right)\right\}\theta_{\vec{a},s}^0\theta_a^1\delta_0(\rs_{\widetilde{m}}(n-s-1))\nonumber\\
&=&(-1)^{s}d^2\sum_{\substack{m'+a+b+2s+2\\+\sum_{i=1}^ka_i=m}}\left\{\prod_{j=0}^{s-1}\left(j+a+1+\sum_{i=1}^s (a_i+1)\right)\right\}\theta_{\vec{a},s}^0\theta_a^1\theta_b^1\rs_{m'}(n-s-2)\nonumber\\
&=&-d\sum_{b+1+m''=m}\theta_b^1\times\nonumber\\
&&\times(-1)^sd\left(\sum_{\substack{m'+a+1+2s\\+\sum_{i=1}^ka_i=m''}}\left\{\prod_{j=0}^{s-1}\left(j+a+1+\sum_{i=1}^s (a_i+1)\right)\right\}\theta_{\vec{a},s}^0\theta_a^1\rs_{m'}(n-s-2)
\right)\nonumber\\ &=&-d\sum_{b+1+m''=m}\theta_b^1\cdot(\delta_1)^s\delta_0(\rs_{m''}(n-1))\nonumber\\
&=&-d\sum_{b+1+m''=m}\theta_b^1\cdot[1](\delta_1)^s\delta_0(\rs_{m''}(n))\nonumber\\ \end{eqnarray}

By Lemma \ref{lbz} and (\ref{add1}) we have
\begin{eqnarray}&&(\delta_1)^{k-1-s}\delta_0(\delta_1)^s\delta_0(\rs_m(n))\nonumber\\
&=&-d\sum_{b+1+m''=m}(\delta_1)^{k-1-s}\left(\theta_b^1\cdot[1](\delta_1)^s\delta_0(\rs_{m'}(n))\right)\nonumber\\
&=&-d\sum_{b+1+m''=m}\sum_{i=0}^{k-s-1}\left\{\binom{k-s-1}{i}\cdot[k-i-s]((\delta_1)^{i+s}\delta_0(\rs_{m''}(n)))\times\right.\nonumber\\
&&\times\left.[i]((\delta_1)^{k-s-i-1}(\theta^1_b\rs_0(n-s-2)))\right\}\nonumber\\
&=&-d\sum_{b+1+m''=m}\sum_{i=0}^{k-s-1}\left\{\binom{k-s-1}{i}\cdot (-1)^{s+i+1}d~\times\right.\nonumber\\ &&\left(\sum_{\substack{m_1+a+2(s+i)+1\\+\sum_{l=1}^{s+i}a_l=m''}}\prod_{j=0}^{i+s-1}\left(j+a+1+\sum_{i=1}^{s+i} (a_i+1)\right)\theta_{\vec{a},i+s}^0\theta_a^1\rs_{m_1}(n-k-1)\right)\nonumber\\
&&\times \left.[i]((\delta_1)^{k-s-i-1}(\theta^1_b\rs_0(n-s-2)))\right\}\nonumber\\
&=&d^2\sum_{\substack{m_1+a+2(s+i)+1\\+b+1\\+\sum_{l=1}^{s+i}a_l=m}}\sum_{i=0}^{k-s-1}\left\{\binom{k-s-1}{i}\cdot [i]((\delta_1)^{k-s-i-1}(\theta^1_b\rs_0(n-s-2)))\times\right.\nonumber\\ &&\times\left.(-1)^{s+i}~\left(\prod_{j=0}^{i+s-1}\left(j+a+1+\sum_{i=1}^{s+i} (a_i+1)\right)\theta_{\vec{a},i+s}^0\theta_a^1\rs_{m_1}(n-k-1)\right)\right\}\nonumber\\
&=&d^2\sum_{\substack{m_1+a+1\\+2(s+i)+b+1\\+\sum_{l=1}^{s+i}a_l=m}}\sum_{i=0}^{k-s-1}\left\{\binom{k-s-1}{i}\right.\cdot[i]((\delta_1)^{k-s-i-1}(\theta^1_b\rs_{m_1}(n-s-2)))\times\nonumber\\&& (-1)^{s+i}\left.\left(\prod_{j=0}^{i+s-1}\left(j+a+1+\sum_{i=1}^{s+i} (a_i+1)\right)\theta_{\vec{a},i+s}^0\theta_a^1\right)\right\},\nonumber\\
\end{eqnarray}
where the last equality is because by Lemma \ref{lbz} we have
\begin{eqnarray}&&[i]((\delta_1)^{k-s-i-1}(\theta^1_b\rs_0(n-s-2)))\cdot\rs_{m_1}(n-k-1)\nonumber\\
&=&[i]((\delta_1)^{k-s-i-1}(\theta^1_b\rs_0(n-s-2))\cdot\rs_{m_1}(n-k-1+i))\nonumber\\
&=&[i]((\delta_1)^{k-s-i-1}(\theta^1_b\rs_{m_1}(n-s-2))).\end{eqnarray}
Let $m_1+b+1=m_2$, then we have
\begin{eqnarray}\label{del101}&&(\delta_1)^{k-1-s}\delta_0(\delta_1)^s\delta_0(\rs_m(n))\nonumber\\
&=&d^2\sum_{\substack{m_2+a+2(s+i)+1\\+\sum_{l=1}^{s+i}a_l=m}}\sum_{i=0}^{k-s-1}\left\{(-1)^{s+i}\binom{k-s-1}{i}\times\right.\nonumber\\
&&\times\left(\prod_{j=0}^{i+s-1}\left(j+a+1+\sum_{i=1}^{s+i} (a_i+1)\right)\theta_{\vec{a},i+s}^0\theta_a^1\right)\nonumber\\&&\times \left.[i]((\delta_1)^{k-s-i-1}\left(\sum_{b+m_1+1=m_2}\theta^1_b\rs_{m_1}(n-s-2))\right)\right\}\nonumber\\
&=&d\sum_{\substack{m_2+a+2(s+i)+1\\+\sum_{l=1}^{s+i}a_l=m}}\sum_{i=0}^{k-s-1}\left\{\binom{k-s-1}{i}\cdot\right.[i]((\delta_1)^{k-s-i-1}\delta_0(\rs_{m_2}(n-s-1))\nonumber\\
&&\times\left.(-1)^{s+i+1}\left(\prod_{j=0}^{i+s-1}\left(j+a+1+\sum_{i=1}^{s+i} (a_i+1)\right)\theta_{\vec{a},i+s}^0\theta_a^1\right)\right\}\nonumber\\
&=&(-1)^{k+1}d^2\sum_{\substack{m_3+a+b+2\\+2(k-1)+\sum_{l=1}^{s+i}a_l\\+\sum_{l=1}^{k-s-i-1}b_l=m}}\sum_{i=0}^{k-s-1}\left\{\binom{k-s-1}{i}\times\right.\nonumber\\
&&\times\left(\prod_{j=0}^{k-s-i-2}\left(j+b+1+\sum_{i=1}^{k-s-i-1} (b_i+1)\right)\theta_{\vec{b},k-i-s-1}^0\theta_b^1\right)\nonumber\\
&&\times\left.\left(\prod_{j=0}^{i+s-1}\left(j+a+1+\sum_{i=1}^{s+i} (a_i+1)\right)\theta_{\vec{a},i+s}^0\theta_a^1\right)\right\}\rs_{m_3}(n-k-1)\nonumber
\end{eqnarray}
Hence (\ref{del101s}) is proved.  (\ref{del101a}) follows directly from (\ref{del101s}) and Lemma \ref{combi} below.  

The lemma is proved.
\end{proof}
\begin{lemma}\label{combi}For any $0\leq M\leq k-1$, we have
\[\sum_{\substack{s,i\geq 0\\ s+i=M}}\binom{k-s-1}{i}=\binom{k}{M}.\]
\end{lemma}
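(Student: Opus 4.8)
The plan is to reduce Lemma~\ref{combi} to the standard hockey-stick (parallel summation) identity. First I would reindex the left-hand side by setting $j := i = M - s$; as $s$ runs over $0,1,\dots,M$ and the hypothesis gives $0 \le M \le k-1$, we always have $k - s - 1 \ge k - M - 1 \ge 0$, so every binomial coefficient occurring has nonnegative upper and lower entries and the reindexing is unambiguous. Under this substitution the left-hand side becomes $\sum_{j=0}^{M}\binom{k-M-1+j}{j}$.

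Next I would invoke the identity $\sum_{j=0}^{n}\binom{r+j}{j} = \binom{r+n+1}{n}$, valid for integers $r \ge 0$ and $n \ge 0$, which follows by a one-line induction on $n$ using Pascal's rule $\binom{a}{b} = \binom{a-1}{b} + \binom{a-1}{b-1}$ (the inductive step rewrites $\binom{r+n+1}{n} + \binom{r+n+1}{n+1} = \binom{r+n+2}{n+1}$). Applying it with $r = k - M - 1 \ge 0$ and $n = M$ yields $\sum_{j=0}^{M}\binom{k-M-1+j}{j} = \binom{k-M-1+M+1}{M} = \binom{k}{M}$, which is exactly the asserted equality.

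As an alternative one can argue with generating functions: the left-hand side is the coefficient of $x^M$ in $\sum_{s\ge 0} x^s (1+x)^{k-s-1}$ (only the terms with $s \le M$ contribute to this coefficient), and, summing the geometric series in $\tfrac{x}{1+x}$, this formal power series equals $(1+x)^{k-1}\cdot\frac{1}{1 - x/(1+x)} = (1+x)^k$, whose coefficient of $x^M$ is $\binom{k}{M}$.

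There is no genuine obstacle in this lemma; it is a routine binomial-coefficient manipulation. The only point that deserves a moment's attention is making sure the summation never strays outside the range in which binomial coefficients behave classically, and this is guaranteed by the standing hypothesis $0 \le M \le k-1$, which forces $k-s-1\ge 0$ throughout.
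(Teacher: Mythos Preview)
Your proposal is correct. Your generating-function alternative is essentially the paper's own argument: the paper extracts the coefficient of $x^{k-M-1}$ from the finite sum $\sum_{s=0}^{M}(x+1)^{k-s-1}$ and evaluates the resulting geometric series, whereas you extract the complementary coefficient of $x^{M}$ and sum the corresponding infinite geometric series in $x/(1+x)$; these are the same computation up to the symmetry $\binom{k-s-1}{M-s}=\binom{k-s-1}{k-M-1}$ and the harmless extension of the summation range.

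Your primary route, however, is genuinely different and more elementary: after the reindexing $j=M-s$ you recognize the sum $\sum_{j=0}^{M}\binom{k-M-1+j}{j}$ as an instance of the hockey-stick (parallel summation) identity and finish by a one-line induction via Pascal's rule. This avoids generating functions entirely and makes the range check $k-s-1\ge 0$ completely transparent. The paper's approach has the mild advantage of being self-contained (no named identity is quoted), while yours has the advantage of pinpointing exactly which classical fact is at work.
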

\begin{proof}We have
\begin{eqnarray}\sum_{\substack{s,i\geq 0\\ s+i=M}}\binom{k-s-1}{i}&=&\text{Coeff}_{x^{k-M-1}}\left(\sum_{s=0}^M(x+1)^{k-s-1}\right)\nonumber\\
&=&\text{Coeff}_{x^{k-M-1}}\left((x+1)^{k-M-1}\frac{(x+1)^{M+1}-1}{x+1-1}\right)\nonumber\\
&=&\text{Coeff}_{x^{k-M-1}}\left(\frac{(x+1)^{k}-(x+1)^{k-M-1}}{x}\right)\nonumber\\
&=&\text{Coeff}_{x^{k-M}}\left((x+1)^{k}-(x+1)^{k-M-1}\right)\nonumber\\
&=&\binom{k}{M}.\nonumber
\end{eqnarray}
\end{proof}
\begin{lemma}\label{combi2}For any $a,N,m,k\in\bz_{\geq 0}$, we always have%\footnotesize
\begin{eqnarray}\label{anmk}&&\sum_{M=0}^{k}N\binom{k+1}{M}\sum_{\sum_{i=1}^{k+1}a_i=m}\theta^0_{\vec{a},k+1}\cdot\left\{\left(\prod_{j=0}^{M-1}(j+a+1+\sum_{i=1}^M(a_i+1))\right)\times\right.\nonumber\\
&&\times \left.\left(\prod_{j=0}^{k-M-1}(j+a_{k+1}+1+\sum_{i=M+1}^k(a_i+1)+N+1)\right)\right\}\nonumber\\
&=&\sum_{\sum_{i=1}^{k+1}a_i=m}\theta^0_{\vec{a},k+1}\cdot\left(\prod_{j=0}^{k}(j+a+1+\sum_{i=1}^{k+1}(a_i+1)+N)\right)\nonumber\\&&-\sum_{\sum_{i=1}^{k+1}a_i=m}\theta^0_{\vec{a},k+1}\cdot\left(\prod_{j=0}^{k}(j+a+1+\sum_{i=1}^{k+1}(a_i+1))\right)\nonumber\\
\end{eqnarray}
%\normalsize
\end{lemma}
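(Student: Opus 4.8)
The plan is to strip off the classes $\theta^0_{a_i}$, reduce to a polynomial identity in the integers $a,N,s$ (where $s=a_1+\cdots+a_{k+1}$) and prove that identity by induction. Since $\theta^0_{\vec a,k+1}=\theta^0_{a_1}\cdots\theta^0_{a_{k+1}}$ is symmetric under permutations of $(a_1,\dots,a_{k+1})$ and the right-hand side of (\ref{anmk}) is manifestly symmetric in the $a_i$, it suffices (by reindexing each sum $\sum_{\sum a_i=m}$) to prove the identity obtained by replacing each summand on the left by its average over $S_{k+1}$. Write the $M$-th summand on the left as $N\binom{k+1}{M}\,g_M(a_1+\cdots+a_M)$, where
\[g_M(p):=\Bigl(\prod_{j=0}^{M-1}\bigl(j+a+1+p+M\bigr)\Bigr)\Bigl(\prod_{j=0}^{k-M-1}\bigl(j+(s-p)+(k+1-M)+N+1\bigr)\Bigr),\]
so $g_M$ depends on $(a_1,\dots,a_{k+1})$ only through $p=a_1+\cdots+a_M$ and through $s$. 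The $S_{k+1}$-average of a function of $a_1+\cdots+a_M$ equals $\binom{k+1}{M}^{-1}\sum_{|T|=M}$ of the same function of $a_T:=\sum_{i\in T}a_i$, the sum running over $M$-element subsets $T\subseteq\{1,\dots,k+1\}$. Hence (\ref{anmk}) reduces to the polynomial identity
\[N\sum_{M=0}^{k}\ \sum_{|T|=M}g_M(a_T)\ =\ \prod_{j=0}^{k}(C+j+N)-\prod_{j=0}^{k}(C+j),\qquad C:=a+1+s+(k+1).\]

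For the right-hand side I would use the telescoping identity
\[\prod_{j=0}^{k}(C+j+N)-\prod_{j=0}^{k}(C+j)=N\sum_{M=0}^{k}\Bigl(\prod_{j=0}^{M-1}(C+j)\Bigr)\Bigl(\prod_{j=0}^{k-M-1}(C+M+1+N+j)\Bigr),\]
which follows by summing $f_i-f_{i+1}$ over $0\le i\le k$, where $f_i:=\bigl(\prod_{j=0}^{i-1}(C+j)\bigr)\bigl(\prod_{j=i}^{k}(C+j+N)\bigr)$: indeed $f_0$ and $f_{k+1}$ are the two products on the left, and $f_i-f_{i+1}=N\bigl(\prod_{j=0}^{i-1}(C+j)\bigr)\bigl(\prod_{j=i+1}^{k}(C+j+N)\bigr)$. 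Cancelling the common factor $N$, the task becomes to prove
\[\sum_{M=0}^{k}\ \sum_{|T|=M}g_M(a_T)\ =\ \sum_{M=0}^{k}\Bigl(\prod_{j=0}^{M-1}(C+j)\Bigr)\Bigl(\prod_{j=0}^{k-M-1}(C+M+1+N+j)\Bigr),\]
whose right-hand side depends on the $a_i$ only through $s$.

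This last identity is the heart of the matter and the step I expect to be the main obstacle. Note that for $k\ge2$ the inner sum $\sum_{|T|=M}g_M(a_T)$ is symmetric in the $a_i$ but is \emph{not} a function of $s$ alone: the higher power sums in the $a_i$ cancel only after summing over $M$, so there is no term-by-term matching in $M$ and the argument cannot be localised. I would prove it by induction on $k$ (the case $k=0$ being immediate): splitting off the lowest ($j=0$) factor from each of the two products in $g_M$ and rewriting the leftover product of consecutive factors as $(x+r-1)$ times a product of $r-1$ consecutive factors, one turns $g_M$ in the $(k{+}1)$-case into a linear ``boundary'' factor times a $g_{M}$ or $g_{M-1}$ of the $k$-case with $N$ (resp.\ $a$) shifted by one; grouping the subsets $T\subseteq\{1,\dots,k+2\}$ according to whether they contain a distinguished index then expresses the $(k{+}1)$-instance in terms of the $k$-instance, the binomial recombination among the different $M$ being exactly the Vandermonde-type sum already established in Lemma~\ref{combi}. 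An alternative route, to which I would fall back if the bookkeeping becomes unwieldy, is to recognise the subset sum on the left as an instance of the Hagen--Rothe (Abel-type) convolution identity for rising factorials, after the change of variable absorbing the shift ``$+M$'' inside $j+a+1+p+M$ and converting $\binom{k+1}{M}$ into a ratio of factorials. In either approach the bulk of the work is the elementary but lengthy manipulation of products of linear factors; the statement (\ref{anmk}) then follows by multiplying back by $\theta^0_{\vec a,k+1}$ and summing over $\sum_i a_i=m$.
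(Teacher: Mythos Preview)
Your reduction is sound and takes a genuinely different route from the paper. Symmetrising over $S_{k+1}$ is legitimate because $\theta^0_{\vec a,k+1}$ is symmetric and the sum runs over all compositions; your telescoping identity for $\prod_{j=0}^k(C+j+N)-\prod_{j=0}^k(C+j)$ is correct; and after cancelling $N$ you are left with the clean polynomial identity
\[
\sum_{T\subsetneq[k+1]}(x+b_T)_{|T|}\,(y+b_{T^c})_{\,k-|T|}
\;=\;\sum_{M=0}^{k}(x+B)_M\,(x+y+B+M)_{\,k-M},
\]
where $b_i=a_i+1$, $B=\sum_i b_i$, $x=a+1$, $y=N+1$, and $(z)_r$ denotes the rising factorial. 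This identity is true and implies the lemma, so your strategy would succeed once it is established.

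The gap is that you do not actually prove it. Your inductive sketch (``split off the $j=0$ factor \ldots\ group subsets by a distinguished index \ldots\ Lemma~\ref{combi}'') does not close as written: when you peel a linear factor from $(x+b_T)_{|T|}$ or from $(y+b_{T^c})_{k-|T|}$, that factor still depends on $b_T$, so the residual sum over subsets is not simply an instance of the $k$-case with shifted parameters, and Lemma~\ref{combi} (a hockey-stick identity for binomial coefficients) is not the recombination you need here. The Hagen--Rothe fallback is likewise only named, not matched to the expression. So the ``main obstacle'' you correctly flag is in fact left open.

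For comparison, the paper never isolates this polynomial identity. It works directly with the $\theta^0$-weighted sums and runs a \emph{double} induction on $(N,k)$, using the one-step recursion
\[
\prod_{j=0}^{k}(j+P+N+1)=\prod_{j=0}^{k}(j+P+N)+(k+1)\prod_{j=0}^{k-1}(j+P+N+1)
\]
to pass from $N$ to $N+1$; the extra terms are handled by swapping $a_{k+2}\leftrightarrow a_l$ inside the $\theta^0$-sum (so the symmetry of $\theta^0_{\vec a}$ is used locally, term by term, rather than globally up front), after which the difference $L-R$ at $(k+1,N+1)$ is expressed via instances at $(k+1,N)$ and at $(k,\,\cdot\,)$. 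Your route is more conceptual---it exposes that everything reduces to an identity of rising factorials independent of the $\theta$'s---but until the subset-sum identity above is actually proved, the argument is incomplete.
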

\begin{proof}We do the induction on both $N$ and $k$.  Obviously, the lemma holds for all $k\geq 0$ if $N=0$ and for all $N\geq 0$ if $k=0$.  Assume the lemma holds for all $N\geq 0$ if $k\leq k_0$ and for all $N\leq N_0$ if $k=k_0+1$.  It suffices to show that the lemma holds for $k=k_0+1$ and $N=N_0+1$. 

We have for any $N,k\geq 0,P\in\bz$
\begin{eqnarray}\label{nkp}&&\prod_{j=0}^{k}(j+P+N+1)=\prod_{j=1}^{k+1}(j+P+N)\nonumber\\&=&\prod_{j=0}^{k}(j+P+N)+(k+1)\prod_{j=0}^{k-1}(j+P+N+1)%\nonumber\\
\end{eqnarray}

Denote by $L(m,k,N)$ ( $R(m,k,N)$, resp.) the left (right, resp.) hand side of (\ref{anmk}).  By (\ref{nkp}) we have
\begin{eqnarray}
&&R(m,k_0+1,N_0+1)=\nonumber\\
&&\sum_{\sum_{i=1}^{k_0+2}a_i=m}\theta^0_{\vec{a},k_0+2}\cdot\left(\prod_{j=0}^{k_0+1}(j+a+1+\sum_{i=1}^{k_0+2}(a_i+1)+N_0+1)\right)\nonumber\\&&-\sum_{\sum_{i=1}^{k_0+2}a_i=m}\theta^0_{\vec{a},k_0+2}\cdot\left(\prod_{j=0}^{k_0+1}(j+a+1+\sum_{i=1}^{k_0+2}(a_i+1))\right)\nonumber\\
&=&\sum_{\sum_{i=1}^{k_0+2}a_i=m}\theta^0_{\vec{a},k_0+2}\cdot\left(\prod_{j=0}^{k_0+1}(j+a+1+\sum_{i=1}^{k_0+2}(a_i+1)+N_0)\right)\nonumber\\&&-\sum_{\sum_{i=1}^{k_0+2}a_i=m}\theta^0_{\vec{a},k_0+2}\cdot\left(\prod_{j=0}^{k_0+1}(j+a+1+\sum_{i=1}^{k_0+2}(a_i+1))\right)\nonumber\\
&&+\sum_{\sum_{i=1}^{k_0+2}a_i=m}(k_0+2)\theta^0_{\vec{a},k_0+2}\cdot\left(\prod_{j=0}^{k_0}(j+a+1+\sum_{i=1}^{k_0+2}(a_i+1)+N_0+1)\right)\nonumber\\
\end{eqnarray}
Therefore
\begin{eqnarray}\label{rmkn}&&R(m,k_0+1,N_0+1)-R(m,k_0+1,N_0)\nonumber\\
&=&\sum_{\sum_{i=1}^{k_0+2}a_i=m}(k_0+2)\theta^0_{\vec{a},k_0+2}\cdot\left(\prod_{j=0}^{k_0}(j+a+1+\sum_{i=1}^{k_0+2}(a_i+1)+N_0+1)\right)\nonumber\\
&=&(k_0+2)\sum_{a_{k_0+2}=0}^m\theta_{a_{k_0+2}}^0\cdot R(m-a_{k_0+2},k_0,N_0+a_{k_0+2}+2)\nonumber\\
&&+(k_0+2)\sum_{\sum_{i=1}^{k_0+2}a_i=m}\theta^0_{\vec{a},k_0+2}\cdot\left(\prod_{j=0}^{k_0}(j+a+1+\sum_{i=1}^{k_0+1}(a_i+1))\right)
\end{eqnarray}
On the other hand by (\ref{nkp}) we have\footnotesize
\begin{eqnarray}\label{lmkn1}&&L(m,k_0+1,N_0+1)\nonumber\\
&=&\sum_{M=0}^{k_0+1}(N_0+1)\binom{k_0+2}{M}\sum_{\sum_{i=1}^{k_0+2}a_i=m}\theta^0_{\vec{a},k_0+2}\cdot\left\{\left(\prod_{j=0}^{M-1}(j+a+1+\sum_{i=1}^M(a_i+1))\right)\times\right.\nonumber\\
&&\times \left.\left(\prod_{j=0}^{k_0-M}(j+a_{k_0+2}+1+\sum_{i=M+1}^{k_0+1}(a_i+1)+N_0+2)\right)\right\}\nonumber\\
&=&\sum_{M=0}^{k_0+1}(N_0+1)\binom{k_0+2}{M}\sum_{\sum_{i=1}^{k_0+2}a_i=m}\theta^0_{\vec{a},k_0+2}\cdot\left\{\left(\prod_{j=0}^{M-1}(j+a+1+\sum_{i=1}^M(a_i+1))\right)\times\right.\nonumber\\
&&\times \left.\left(\prod_{j=0}^{k_0-M}(j+a_{k_0+2}+1+\sum_{i=M+1}^{k_0+1}(a_i+1)+N_0+1)\right)\right\}\nonumber\\
&&+\sum_{M=0}^{k_0}(N_0+1)(k_0-M+1)\binom{k_0+2}{M}\sum_{\sum_{i=1}^{k_0+2}a_i=m}\theta^0_{\vec{a},k_0+2}\cdot\left\{\left(\prod_{j=0}^{M-1}(j+a+1+\sum_{i=1}^M(a_i+1))\right)\times\right.\nonumber\\
&&\times \left.\left(\prod_{j=0}^{k_0-M-1}(j+a_{k_0+2}+1+\sum_{i=M+1}^{k_0+1}(a_i+1)+N_0+2)\right)\right\}\nonumber\\
&=&L(m,k_0+1,N_0)+\sum_{M=0}^{k_0+1}\binom{k_0+2}{M}\sum_{\sum_{i=1}^{k_0+2}a_i=m}\theta^0_{\vec{a},k_0+2}\cdot\left\{\left(\prod_{j=0}^{M-1}(j+a+1+\sum_{i=1}^M(a_i+1))\right)\times\right.\nonumber\\
&&\times \left.\left(\prod_{j=0}^{k_0-M}(j+a_{k_0+2}+1+\sum_{i=M+1}^{k_0+1}(a_i+1)+N_0+1)\right)\right\}\nonumber\\
&&+\sum_{M=0}^{k_0}(N_0+1)(k_0-M+1)\binom{k_0+2}{M}\sum_{\sum_{i=1}^{k_0+2}a_i=m}\theta^0_{\vec{a},k_0+2}\cdot\left\{\left(\prod_{j=0}^{M-1}(j+a+1+\sum_{i=1}^M(a_i+1))\right)\times\right.\nonumber\\
&&\times \left.\left(\prod_{j=0}^{k_0-M-1}(j+a_{k_0+2}+1+\sum_{i=M+1}^{k_0+1}(a_i+1)+N_0+2)\right)\right\}\nonumber\\
&=&L(m,k_0+1,N_0)+\binom{k_0+2}{k_0+1}\sum_{\sum_{i=1}^{k_0+2}a_i=m}\theta^0_{\vec{a},k_0+2}\cdot\left(\prod_{j=0}^{k_0}(j+a+1+\sum_{i=1}^{k_0+1}(a_i+1))\right)\nonumber\\
&&+\sum_{M=0}^{k_0}\binom{k_0+2}{M}\sum_{\sum_{i=1}^{k_0+2}a_i=m}\theta^0_{\vec{a},k_0+2}\cdot\left\{\left(\prod_{j=0}^{M-1}(j+a+1+\sum_{i=1}^M(a_i+1))\right)\times\right.\nonumber\\
&&\times \left.\left(\prod_{j=0}^{k_0-M-1}(j+a_{k_0+2}+1+\sum_{i=M+1}^{k_0+1}(a_i+1)+N_0+2)\right)\right\}\left(\sum_{i=M+1}^{k_0+2}(a_i+1)\right)\nonumber\\
&&+\sum_{M=0}^{k_0}(N_0+1)(k_0-M+2)\binom{k_0+2}{M}\sum_{\sum_{i=1}^{k_0+2}a_i=m}\theta^0_{\vec{a},k_0+2}\cdot\left\{\left(\prod_{j=0}^{M-1}(j+a+1+\sum_{i=1}^M(a_i+1))\right)\times\right.\nonumber\\
&&\times \left.\left(\prod_{j=0}^{k_0-M-1}(j+a_{k_0+2}+1+\sum_{i=M+1}^{k_0+1}(a_i+1)+N_0+2)\right)\right\}\nonumber\\
%&&+\sum_{M=0}^{k_0+1}\binom{k_0+2}{M}\sum_{\sum_{i=1}^{k_0+2}a_i=m}\theta^0_{\vec{a},k_0+2}\cdot\left\{\left(\prod_{j=0}^{M-1}(j+a+1+\sum_{i=1}^M(a_i+1))\right)\times\right.\nonumber\\
%&&\times \left.\left(\prod_{j=0}^{k_0-M}(j+a_{k_0+2}+1+\sum_{i=M+1}^{k_0+1}(a_i+1)+N_0+2)\right)\right\}\nonumber
\end{eqnarray}
\normalsize
For each $0\leq M\leq k_0$, by switching $a_{k_0+2}$ and $a_{l}$ we have for any $M+1\leq l\leq k_0+1$ 
\begin{eqnarray}&&\sum_{\sum_{i=1}^{k_0+2}a_i=m}\theta^0_{\vec{a},k_0+2}(a_{l}+1)\cdot\left\{\left(\prod_{j=0}^{M-1}(j+a+1+\sum_{i=1}^M(a_i+1))\right)\times\right.\nonumber\\
&&\times \left.\left(\prod_{j=0}^{k_0-M-1}(j+a_{k_0+2}+1+\sum_{i=M+1}^{k_0+1}(a_i+1)+N_0+2)\right)\right\}\nonumber\\
&=&\sum_{\sum_{i=1}^{k_0+2}a_i=m}\theta^0_{\vec{a},k_0+2}(a_{k_0+2}+1)\cdot\left\{\left(\prod_{j=0}^{M-1}(j+a+1+\sum_{i=1}^M(a_i+1))\right)\times\right.\nonumber\\
&&\times \left.\left(\prod_{j=0}^{k_0-M-1}(j+a_{k_0+2}+1+\sum_{i=M+1}^{k_0+1}(a_i+1)+N_0+2)\right)\right\}\nonumber
\end{eqnarray}
Therefore
\begin{eqnarray}\label{alak}
&&\sum_{M=0}^{k_0}\binom{k_0+2}{M}\sum_{\sum_{i=1}^{k_0+2}a_i=m}\theta^0_{\vec{a},k_0+2}\cdot\left\{\left(\prod_{j=0}^{M-1}(j+a+1+\sum_{i=1}^M(a_i+1))\right)\times\right.\nonumber\\
&&\times \left.\left(\prod_{j=0}^{k_0-M-1}(j+a_{k_0+2}+1+\sum_{i=M+1}^{k_0+1}(a_i+1)+N_0+2)\right)\right\}\left(\sum_{l=M+1}^{k_0+2}(a_l+1)\right)\nonumber\\
&=&\sum_{M=0}^{k_0}(k_0-M+2)\binom{k_0+2}{M}\sum_{\sum_{i=1}^{k_0+2}a_i=m}\theta^0_{\vec{a},k_0+2}\cdot\left\{\left(\prod_{j=0}^{M-1}(j+a+1+\sum_{i=1}^M(a_i+1))\right)\times\right.\nonumber\\
&&\times \left.\left(\prod_{j=0}^{k_0-M-1}(j+a_{k_0+2}+1+\sum_{i=M+1}^{k_0+1}(a_i+1)+N_0+2)\right)\right\}(a_{k_0+2}+1)\nonumber\\
&=&\sum_{M=0}^{k_0}(k_0+2)\binom{k_0+1}{M}\sum_{\sum_{i=1}^{k_0+2}a_i=m}\theta^0_{\vec{a},k_0+2}\cdot\left\{\left(\prod_{j=0}^{M-1}(j+a+1+\sum_{i=1}^M(a_i+1))\right)\times\right.\nonumber\\
&&\times \left.\left(\prod_{j=0}^{k_0-M-1}(j+a_{k_0+2}+1+\sum_{i=M+1}^{k_0+1}(a_i+1)+N_0+2)\right)\right\}(a_{k_0+2}+1)
\nonumber\\
\end{eqnarray}
Hence by (\ref{alak}) and (\ref{lmkn1}) we have
\begin{eqnarray}\label{lmkn2}&&L(m,k_0+1,N_0+1)-L(m,k_0+1,N_0)\nonumber\\
&=&(k_0+2)\sum_{M=0}^{k_0+1}\binom{k_0+1}{M}\sum_{\sum_{i=1}^{k_0+2}a_i=m}\theta^0_{\vec{a},k_0+2}\cdot\left\{\left(\prod_{j=0}^{M-1}(j+a+1+\sum_{i=1}^M(a_i+1))\right)\times\right.\nonumber\\
&&\times \left.\left(\prod_{j=0}^{k_0-M-1}(j+a_{k_0+2}+1+\sum_{i=M+1}^{k_0+1}(a_i+1)+N_0+2)\right)\right\}(N_0+1+a_{k_0+2}+1)\nonumber\\
&&+(k_0+2)\sum_{\sum_{i=1}^{k_0+2}a_i=m}\theta^0_{\vec{a},k_0+2}\cdot\left(\prod_{j=0}^{k_0}(j+a+1+\sum_{i=1}^{k_0+1}(a_i+1))\right)\nonumber\\
&=&(k_0+2)\sum_{a_{k_0+2}=0}^m\theta_{a_{k_0+2}}^0\cdot L(m-a_{k_0+2},k_0,N_0+a_{k_0+2}+2)\nonumber\\
&&+(k_0+2)\sum_{\sum_{i=1}^{k_0+2}a_i=m}\theta^0_{\vec{a},k_0+2}\cdot\left(\prod_{j=0}^{k_0}(j+a+1+\sum_{i=1}^{k_0+1}(a_i+1))\right)\nonumber\\
\end{eqnarray}
By (\ref{rmkn}), (\ref{lmkn2}) and induction assumption we have
\[R(m,k_0+1,N_0+1)=L(m,k_0+1,N_0+1).\] 
The lemma is proved.
\end{proof}
\begin{rem}Since both sides of (\ref{anmk}) are polynomials in $N$, Lemma \ref{combi2} still holds for all $a,m,k\in\bz_{\geq0}$ and $N\in\bc$, and also with $\theta_{a}^0$ replaced by any fixed function in $a$.\end{rem}
\begin{proof}[Proof of Proposition \ref{wkmain}]By (\ref{del2}) and Lemma \ref{del21S} we have\footnotesize
\begin{eqnarray}&&(-1)^{k+1}\delta_2(\delta_1)^k\delta_0(\rs_m(n))\nonumber\\
&=&d^2\sum_{\substack{u\geq 0,t\geq0\\m= m'+a\\+2k+t+2\\+\sum_{i=1}^ka_i}}\left\{\prod_{j=0}^{k-1}\left(j+a+1+\sum_{i=1}^k (a_i+1)\right)\right\}\theta_{\vec{a},k}^0\cdot\theta_{a-u-2}^0\theta_{u+t}^0(u+1)\rs_{m'}(n-k-2)\nonumber\\
&-3d&\sum_{\substack{u\geq 0,\\m= m'+a\\+2k+1\\+\sum_{i=1}^ka_i}}\left\{\prod_{j=0}^{k-1}\left(j+a+1+\sum_{i=1}^k (a_i+1)\right)\right\}\theta_{\vec{a},k}^0\cdot\theta_{a-u-3}^0\theta_{u}^0\frac{(u+1)(u+2)}2\rs_{m'}(n-k-2)\nonumber
\end{eqnarray}\normalsize
By Lemma \ref{del31S} and (\ref{del3}) we have 
\begin{eqnarray}&&(-1)^{k+1}\sum_{s=0}^{k-1}\delta_3(\delta_1)^{k-1-s}\delta_0(\delta_1)^s\delta_0(\rs_m(n))\nonumber\\&=&d^2\sum_{M=0}^{k-1}\sum_{\substack{u\geq0,t\geq0\\m'+a+b+2\\+2(k-1)+\sum_{l=1}^{M}a_l\\+\sum_{l=1}^{k-M-1}b_l=m}}\left\{\binom{k}{M}\cdot\theta_{u+t}^0(u+1)(t+1)\times\right.\nonumber\\
&&\times\left(\prod_{j=0}^{k-M-2}\left(j+b+1+\sum_{i=1}^{k-M-1} (b_i+1)\right)\theta_{\vec{b},k-M-1}^0\theta_{b-t-2}^0\right)\nonumber\\
&&\times\left.\left(\prod_{j=0}^{M-1}\left(j+a+1+\sum_{i=1}^{M} (a_i+1)\right)\theta_{\vec{a},M}^0\theta_{a-u-2}^0\right)\right\}\rs_{m'}(n-k-2).\nonumber
\end{eqnarray}
Let $a_{k}=b-t-2$, $a_{M+i}=b_i$ for $1\leq i\leq k-M-1$.  Then we have
\begin{eqnarray}&&(-1)^{k+1}\sum_{s=0}^{k-1}\delta_3(\delta_1)^{k-1-s}\delta_0(\delta_1)^s\delta_0(\rs_m(n))\nonumber\\&=&d^2\sum_{M=0}^{k-1}\sum_{\substack{u\geq0,t\geq 0\\m=m'+a\\+2k+t+2\\ +\sum_{l=1}^{k}a_l}}\left\{\binom{k}{M}\cdot\theta_{u+t}^0(u+1)\times\right.\nonumber\\
&&(t+1)\times\prod_{j=0}^{k-M-2}\left(j+a_k+1+\sum_{i=1}^{k-M-1} (b_i+1)+t+2\right)\nonumber\\
&&\times\left.\prod_{j=0}^{M-1}\left(j+a+1+\sum_{i=1}^{M} (a_i+1)\right)\right\}\theta_{\vec{a},k}^0\theta_{a-u-2}^0\rs_{m'}(n-k-2).\nonumber\\
&=&d^2\sum_{\substack{u\geq0,t\geq 0\\m=m'+a\\+2k+t+2\\ +\sum_{l=1}^{k}a_l}}\sum_{M=0}^{k-1}\left\{\binom{k}{M}\cdot\theta_{u+t}^0(u+1)\times\right.\nonumber\\
&&(t+1)\times\prod_{j=0}^{k-M-2}\left(j+a_k+1+\sum_{i=1}^{k-M-1} (b_i+1)+t+2\right)\nonumber\\
&&\times\left.\prod_{j=0}^{M-1}\left(j+a+1+\sum_{i=1}^{M} (a_i+1)\right)\right\}\theta_{\vec{a},k}^0\theta_{a-u-2}^0\rs_{m'}(n-k-2).\nonumber
\end{eqnarray}
Therefore by Lemma \ref{combi2} we have\footnotesize
\begin{eqnarray}\label{s2+s3}&&(-1)^{k+1}\left(\delta_2(\delta_1)^k\delta_0+\sum_{s=0}^{k-1}\delta_3(\delta_1)^{k-1-s}\delta_0(\delta_1)^s\delta_0\right)(\rs_m(n))\nonumber\\&=&d^2\sum_{\substack{u\geq 0,t\geq0\\m= m'+a\\+2k+t+2\\+\sum_{i=1}^ka_i}}\left\{\prod_{j=0}^{k-1}\left(j+a+t+2+\sum_{i=1}^k (a_i+1)\right)\right\}\theta_{\vec{a},k}^0\cdot\theta_{a-u-2}^0\theta_{u+t}^0(u+1)\rs_{m'}(n-k-2)\nonumber\\
&-3d&\sum_{\substack{u\geq 0,\\m= m'+a\\+2k+1\\+\sum_{i=1}^ka_i}}\left\{\prod_{j=0}^{k-1}\left(j+a+1+\sum_{i=1}^k (a_i+1)\right)\right\}\theta_{\vec{a},k}^0\cdot\theta_{a-u-3}^0\theta_{u}^0\frac{(u+1)(u+2)}2\rs_{m'}(n-k-2)\nonumber\\
\end{eqnarray}\normalsize
Let $a'=a+t+1$ and $u'=u+t$, then we have\footnotesize
\begin{eqnarray}\label{s2+s3+}&&\sum_{\substack{u\geq 0,t\geq0\\m= m'+a\\+2k+t+2\\+\sum_{i=1}^ka_i}}\left\{\prod_{j=0}^{k-1}\left(j+a+t+2+\sum_{i=1}^k (a_i+1)\right)\right\}\theta_{\vec{a},k}^0\cdot\theta_{a-u-2}^0\theta_{u+t}^0(u+1)\rs_{m'}(n-k-2)\nonumber\\
&=&\sum_{\substack{u'\geq 0,\\m= m'+a'\\+2k+1\\+\sum_{i=1}^ka_i}}\left\{\prod_{j=0}^{k-1}\left(j+a'+1+\sum_{i=1}^k (a_i+1)\right)\right\}\theta_{\vec{a},k}^0\cdot\theta_{a-u-2}^0\theta_{u'}^0\left(\sum_{u=0}^{u'}(u+1)\right)\rs_{m'}(n-k-2)\nonumber\\
&=&\sum_{\substack{u'\geq 0,\\m= m'+a'\\+2k+1\\+\sum_{i=1}^ka_i}}\left\{\prod_{j=0}^{k-1}\left(j+a'+1+\sum_{i=1}^k (a_i+1)\right)\right\}\theta_{\vec{a},k}^0\cdot\theta_{a-u-2}^0\theta_{u'}^0\frac{(u'+1)(u'+2)}2\rs_{m'}(n-k-2)\nonumber\\
\end{eqnarray}\normalsize
Combine (\ref{s2+s3}) and (\ref{s2+s3+}) and we have 
\[\left(\delta_2(\delta_1)^k\delta_0+\sum_{s=0}^{k-1}\delta_3(\delta_1)^{k-1-s}\delta_0(\delta_1)^s\delta_0\right)(\rs_m(n))\in d(d-3)A^*(S^{[n-k-2]})\]
\end{proof}
\begin{rem}\label{k=0}By our convention $\sum_{s=0}^{k-1}\delta_3(\delta_1)^{k-1-s}\delta_0(\delta_1)^s\delta_0=0$ for $k=0$, hence by Lemma \ref{comm02} and Remark \ref{lag} we have 
\[\delta_2\delta_0(\rs_m(n))=\delta_2\delta_0(\rs_m(n))-\delta_0\delta_2\rs_m(n)\in (d-3)A^*(S^{[n-2]}).\]
Therefore Proposition \ref{wkmain} also holds for $k=0$.
\end{rem}

%%%%%%%%%%%.     Subsection. 4.2.   %%%%%%%%%%%%
\subsection{Some properties on commutators.}Let $\xi:\bigoplus_{n\geq 0} B(n)\ra\bigoplus_{n\geq 0} B(n)$ be any group homomorphism.  Define $Ad_{\delta_i}(\xi):=[\delta_i,\xi]=\delta_i\xi-\xi\delta_i$. 
\begin{prop}\label{vkmain}For any $k\geq0$ and $0\leq i\leq k$, we have
\begin{eqnarray}&&\left\{(Ad_{\delta_1})^i(\delta_2)\cdot(\delta_1)^{k-i}\delta_0+\sum_{s=0}^{i-1}(Ad_{\delta_1})^{i-1-s}Ad_{\delta_0}(Ad_{\delta_1})^s(\delta_3)\cdot(\delta_1)^{k-i}\delta_0\right.\nonumber\\
&+&\left.(Ad_{\delta_1})^i(\delta_3)\cdot(\sum_{s=0}^{k-i-1}(\delta_1)^{k-i-1-s}\delta_0(\delta_1)^s\delta_0)\right\}(\rs_m(n))\in(d-3)A^*(S^{[n-k-2]})\nonumber\end{eqnarray}
In particular
\begin{eqnarray}\label{ckcom}&& \left\{Ad_{\delta_0}(Ad_{\delta_1})^k(\delta_2)+\sum_{s=0}^{k-1}Ad_{\delta_0}(Ad_{\delta_1})^{k-1-s}Ad_{\delta_0}(Ad_{\delta_1})^s(\delta_3)\right\}(\rs_m(n))\nonumber\\
&=&\left\{(Ad_{\delta_1})^k(\delta_2)+\sum_{s=0}^{k-1}(Ad_{\delta_1})^{k-1-s}Ad_{\delta_0}(Ad_{\delta_1})^s(\delta_3)\right\}\delta_0(\rs_m(n))
\in(d-3)A^*(S^{[n-k-2]}).\nonumber\\\end{eqnarray}
\end{prop}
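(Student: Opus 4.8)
The plan is to prove the inclusion of Proposition \ref{vkmain} by induction on $i$, run simultaneously over all $k\geq i$ and all $m,n\geq 1$. Write $V_i^{(k)}$ for the operator inside the braces of Proposition \ref{vkmain}, and set $T_\ell:=\sum_{s=0}^{\ell-1}(\delta_1)^{\ell-1-s}\delta_0(\delta_1)^s\delta_0$, so that the third summand of $V_i^{(k)}$ is $(Ad_{\delta_1})^i(\delta_3)\,T_{k-i}$. The base case $i=0$ is exactly Proposition \ref{wkmain} (and Remark \ref{k=0} when $k=0$): the middle sum of $V_0^{(k)}$ is empty and $\delta_3\,T_k=\sum_{s=0}^{k-1}\delta_3(\delta_1)^{k-s-1}\delta_0(\delta_1)^s\delta_0$, so $V_0^{(k)}(\rs_m(n))$ is precisely the class shown there to lie in $(d-3)A^*(S^{[n-k-2]})$.

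For the inductive step I would first establish the operator identity
\[
V_{i+1}^{(k)}=\delta_1\,V_i^{(k-1)}-V_i^{(k)}+\delta_0\,(Ad_{\delta_1})^i(\delta_3)\,(\delta_1)^{k-i-1}\delta_0\qquad(0\leq i\leq k-1).
\]
This is a commutator computation: apply $(Ad_{\delta_1})^{j+1}(X)=\delta_1(Ad_{\delta_1})^j(X)-(Ad_{\delta_1})^j(X)\,\delta_1$ with $X=\delta_2$, with $X=\delta_3$, and with $X$ each of the operators $Ad_{\delta_0}(Ad_{\delta_1})^s(\delta_3)$ ($0\leq s\leq i-1$) occurring in the middle sum of $V_i$; note that the new ($s=i$) term of the middle sum of $V_{i+1}$ equals $Ad_{\delta_0}\bigl((Ad_{\delta_1})^i(\delta_3)\bigr)=\delta_0(Ad_{\delta_1})^i(\delta_3)-(Ad_{\delta_1})^i(\delta_3)\,\delta_0$; and apply $\delta_1\,T_{\ell-1}=T_\ell-\delta_0(\delta_1)^{\ell-1}\delta_0$ with $\ell=k-i$ to the third summand. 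Collecting everything, the terms carrying a leading $\delta_1$ reassemble into $\delta_1\,V_i^{(k-1)}$, the terms in which a $\delta_1$ has been absorbed on the right reassemble into $-V_i^{(k)}$, the two copies of $(Ad_{\delta_1})^i(\delta_3)\,\delta_0\,(\delta_1)^{k-i-1}\delta_0$ (one from the new $s=i$ term, one from expanding $(Ad_{\delta_1})^{i+1}(\delta_3)\,T_{k-i-1}$) cancel, and the only survivor is $\delta_0\,(Ad_{\delta_1})^i(\delta_3)\,(\delta_1)^{k-i-1}\delta_0$.

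Granting the identity, evaluate both sides on $\rs_m(n)$. It is convenient to carry along the slightly sharper inductive statement that $V_i^{(k)}(\rs_m(n))$ is $(d-3)$ times an element of $B(n-k-2)$ — this is what the explicit computation of Proposition \ref{wkmain} in fact produces at $i=0$, and it is stable under the step since $\delta_1$ is $\bz$-linear and carries $\bigoplus_n B(n)$ into itself. Thus, by the inductive hypothesis at stage $i$ applied at levels $k$ and $k-1$, both $V_i^{(k)}(\rs_m(n))$ and $V_i^{(k-1)}(\rs_m(n))$ are $(d-3)$-multiples of $B$-classes, hence so is $\delta_1\,V_i^{(k-1)}(\rs_m(n))$. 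The last term of the identity vanishes for grading reasons: by Lemma \ref{fbim}, $(\delta_1)^{k-i-1}\delta_0(\rs_m(n))\in B(n-k+i)^{k-i}_{k-i-1}$, and $(Ad_{\delta_1})^i(\delta_3)=\sum_p(-1)^p\binom{i}{p}(\delta_1)^{i-p}\delta_3(\delta_1)^p$ is a $\bz$-combination of operators each carrying $B(m)^w_j$ into $B(m-i-1)^{w+i+1}_{j+i+3}$, so $(Ad_{\delta_1})^i(\delta_3)(\delta_1)^{k-i-1}\delta_0(\rs_m(n))\in B(n-k-1)^{k+1}_{k+2}=\{0\}$ (since $k+2>k+1$), whence $\delta_0$ of it is $0$. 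Therefore $V_{i+1}^{(k)}(\rs_m(n))\in(d-3)A^*(S^{[n-k-2]})$, and the induction closes. For the ``in particular'' (\ref{ckcom}): taking $i=k$ (where the third summand of $V_k^{(k)}$ is empty) gives exactly the second equality there; and the operator $(Ad_{\delta_1})^k(\delta_2)+\sum_{s=0}^{k-1}(Ad_{\delta_1})^{k-1-s}Ad_{\delta_0}(Ad_{\delta_1})^s(\delta_3)$ maps $B(n)^0_0$ into $B(n-k-1)^{k+1}_{k+2}=\{0\}$ by the same count, so it annihilates $\rs_m(n)$, and hence its $Ad_{\delta_0}$ applied to $\rs_m(n)$ agrees, up to sign, with its post-composition with $\delta_0$ applied to $\rs_m(n)$, which is the first equality.

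The step I expect to be the real work is the operator identity of the second paragraph: one must follow the three structurally distinct families of summands of $V_i$ — the $\delta_2$-term, the $Ad_{\delta_0}$-twisted $\delta_3$-terms, and the single $\delta_3\,T_{k-i}$ term — through one application of $Ad_{\delta_1}$ and verify that everything telescopes precisely, the point being that the sum $\sum_s(Ad_{\delta_1})^{i-1-s}Ad_{\delta_0}(Ad_{\delta_1})^s(\delta_3)$ inside $V_i$ is engineered exactly to absorb, at each step, the failure of $\delta_1$ to commute with $\delta_0$. Everything else — the $\bz$-linearity of $\delta_1$ and the degree vanishing via Lemma \ref{fbim} and the convention $B(n)^w_i=\{0\}$ for $i>w$ — is routine.
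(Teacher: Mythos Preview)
Your proof is correct and follows essentially the same route as the paper. Your operator identity $V_{i+1}^{(k)}=\delta_1V_i^{(k-1)}-V_i^{(k)}+\delta_0(Ad_{\delta_1})^i(\delta_3)(\delta_1)^{k-i-1}\delta_0$ is exactly the decomposition the paper writes down (with $k=k_0+1$, $i=i_0$), the grading argument killing the $\delta_0(Ad_{\delta_1})^i(\delta_3)(\delta_1)^{k-i-1}\delta_0$ term is the same use of Remark~\ref{lag}/Lemma~\ref{fbim}, and your observation that one should carry the inductive hypothesis inside $(d-3)B(\cdot)$ (so that $\delta_1$ is applicable) is a point the paper leaves implicit. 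Your single induction on $i$ over all $k\geq i$ is a mild streamlining of the paper's double induction on $(i,k)$, and your ``up to sign'' remark in the last paragraph is apt: with the convention $Ad_{\delta_0}(\xi)=\delta_0\xi-\xi\delta_0$ and $\delta_0\Phi(\rs_m(n))=0$, one in fact gets $Ad_{\delta_0}(\Phi)(\rs_m(n))=-\Phi\delta_0(\rs_m(n))$, which does not affect the $(d-3)$-divisibility conclusion.
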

\begin{proof}We do the induction on both $i$ and $k$.  The proposition holds for any $k\geq 0$ if $i=0$ by Proposition \ref{wkmain} and for all $0\leq i\leq k$ if $k=0$.  Assume the proposition holds for all $0\leq i\leq k_0$, $k\leq k_0$ and for all $0\leq i\leq i_0\leq k_0$, $k=k_0+1$.  It suffices to show that the proposition holds for $k=k_0+1$ and $i=i_0+1$.

We have
\begin{eqnarray}&&(Ad_{\delta_1})^{i_0+1}(\delta_2)\cdot(\delta_1)^{k_0-i_0}\delta_0+\sum_{s=0}^{i_0}(Ad_{\delta_1})^{i_0-s}Ad_{\delta_0}(Ad_{\delta_1})^s(\delta_3)\cdot(\delta_1)^{k_0-i_0}\delta_0\nonumber\\
&&+(Ad_{\delta_1})^{i_0+1}(\delta_3)\cdot(\sum_{s=0}^{k_0-i_0-1}(\delta_1)^{k_0-i_0-1-s}\delta_0(\delta_1)^s\delta_0)\nonumber\\
&=&\delta_1\left\{(Ad_{\delta_1})^{i_0}(\delta_2)\cdot(\delta_1)^{k_0-i_0}\delta_0+\sum_{s=0}^{i_0-1}(Ad_{\delta_1})^{i_0-1-s}Ad_{\delta_0}(Ad_{\delta_1})^s(\delta_3)\cdot(\delta_1)^{k_0-i_0}\delta_0\right.\nonumber\\
&&+\left.(Ad_{\delta_1})^{i_0}(\delta_3)\cdot(\sum_{s=0}^{k_0-i_0-1}(\delta_1)^{k_0-i_0-1-s}\delta_0(\delta_1)^s\delta_0)\right\}+\delta_0\cdot((Ad_{\delta_1})^{i_0}(\delta_3))\cdot(\delta_1)^{k_0-i_0}\delta_0\nonumber\\
&&-\left\{(Ad_{\delta_1})^{i_0}(\delta_2)\cdot(\delta_1)^{k_0-i_0+1}\delta_0+\sum_{s=0}^{i_0-1}(Ad_{\delta_1})^{i_0-1-s}Ad_{\delta_0}(Ad_{\delta_1})^s(\delta_3)\cdot(\delta_1)^{k_0-i_0+1}\delta_0\right.\nonumber\\
&&+\left.(Ad_{\delta_1})^{i_0}(\delta_3)\cdot(\sum_{s=0}^{k_0-i_0-1}(\delta_1)^{k_0-i_0-s}\delta_0(\delta_1)^s\delta_0)+(Ad_{\delta_1})^{i_0}(\delta_3)\cdot\delta_0(\delta_1)^{k_0-i_0}\delta_0\right\}
\nonumber\end{eqnarray}
By induction assumption we have
\begin{eqnarray}&&\left\{(Ad_{\delta_1})^{i_0}(\delta_2)\cdot(\delta_1)^{k_0-i_0}\delta_0+\sum_{s=0}^{i_0-1}(Ad_{\delta_1})^{i_0-1-s}Ad_{\delta_0}(Ad_{\delta_1})^s(\delta_3)\cdot(\delta_1)^{k_0-i_0}\delta_0\right.\nonumber\\
&&+\left.(Ad_{\delta_1})^{i_0}(\delta_3)\cdot(\sum_{s=0}^{k_0-i_0-1}(\delta_1)^{k_0-i_0-1-s}\delta_0(\delta_1)^s\delta_0)\right\}(\rs_m(n))\in (d-3)A^*(S^{[n-k_0-2]});\nonumber
\end{eqnarray}
and
\begin{eqnarray}&&\left\{(Ad_{\delta_1})^{i_0}(\delta_2)\cdot(\delta_1)^{k_0-i_0+1}\delta_0+\sum_{s=0}^{i_0-1}(Ad_{\delta_1})^{i_0-1-s}Ad_{\delta_0}(Ad_{\delta_1})^s(\delta_3)\cdot(\delta_1)^{k_0-i_0+1}\delta_0\right.\nonumber\\
&&+\left.(Ad_{\delta_1})^{i_0}(\delta_3)\cdot(\sum_{s=0}^{k_0-i_0-1}(\delta_1)^{k_0-i_0-s}\delta_0(\delta_1)^s\delta_0)+(Ad_{\delta_1})^{i_0}(\delta_3)\cdot\delta_0(\delta_1)^{k_0-i_0}\delta_0\right\}(\rs_m(n))\nonumber\\
&=&\left\{(Ad_{\delta_1})^{i_0}(\delta_2)\cdot(\delta_1)^{k_0-i_0+1}\delta_0+\sum_{s=0}^{i_0-1}(Ad_{\delta_1})^{i_0-1-s}Ad_{\delta_0}(Ad_{\delta_1})^s(\delta_3)\cdot(\delta_1)^{k_0-i_0+1}\delta_0\right.\nonumber\\
&&+\left.(Ad_{\delta_1})^{i_0}(\delta_3)\cdot(\sum_{s=0}^{k_0-i_0}(\delta_1)^{k_0-i_0-s}\delta_0(\delta_1)^s\delta_0)\right\}(\rs_m(n))\in (d-3)A^*(S^{[n-k_0-3]}).
\nonumber\end{eqnarray}
By Remark \ref{lag} we have
\[\delta_0\cdot((Ad_{\delta_1})^{i_0}(\delta_3))\cdot(\delta_1)^{k_0-i_0}\delta_0\rs_m(n)=0;\]
\[\delta_0\cdot\left\{(Ad_{\delta_1})^k(\delta_2)+\sum_{s=0}^{k-1}(Ad_{\delta_1})^{k-1-s}Ad_{\delta_0}(Ad_{\delta_1})^s(\delta_3)\right\}(\rs_m(n))=0.\]
Hence the proposition.
\end{proof}

Define 
\begin{eqnarray}&&(1):\bigoplus_{n\geq 0}B(n)\ra \bigoplus_{n\geq 0}B(n),\nonumber\\ &&(1)(\theta_{l_1}^0(n)\cdots\theta_{l_i}^0(n)\theta_{l_{i+1}}^1(n)\cdots\theta_{l_w}^1(n)\rs_m(n))\nonumber\\
&:=& \theta_{l_1}^0(n)\cdots\theta_{l_i}^0(n)\theta_{l_{i+1}}^1(n)\cdots\theta_{l_w}^1(n-k)\rs_{m-1}(n);\nonumber\end{eqnarray}
\[\mb:=\left\{\xi:\bigoplus_{n\geq 0}B(n)\ra \bigoplus_{n\geq 0}B(n)\left|\begin{array}{c}\xi\text{ is a group homomorphism}\\ \text{such that } \xi(1)=(1)\xi \text{ and }\xi[1]=[1]\xi.
\end{array}\right.\right\}\]

\begin{defn}\emph{A $0$-operator of degree $k~(k\geq 0)$} is an element $\xi\in\mb$ such that $\xi(B(n))\subset B(n-k)$ and 
 \[\xi(\theta_{l_1}^0\cdots\theta_{l_i}^0\theta_{l_{i+1}}^1\cdots\theta_{l_w}^1\rs_m(n))
=[k](\theta_{l_1}^0\cdots\theta_{l_i}^0\theta_{l_{i+1}}^1\cdots\theta_{l_w}^1)\cdot\xi(\rs_m(n)).\]
\emph{A $(1,0)$-operator of degree $k~(k\geq 0)$}
is an element $\xi\in\mb$ such that $\xi(B(n)_i^w)\subset B(n-k)^{w+k}_{i+k+1}$ and 
 \begin{eqnarray}&&\xi(\theta_{l_1}^0\cdots\theta_{l_i}^0\theta_{l_{i+1}}^1\cdots\theta_{l_w}^1\rs_m(n))\nonumber\\
&=&\sum_{j=i+1}^w[k](\theta_{l_1}^0\cdots\theta_{l_i}^0\theta_{l_{i+1}}^1\cdots\widehat{\theta^1_{l_j}}\cdots\theta_{l_w}^1)\cdot\xi(\theta_{l_j}^1\rs_m(n)).\nonumber\end{eqnarray}
\emph{A $(2,0)$-operator of degree $k~(k\geq 0)$}
is an element $\xi\in\mb$ such that $\xi(B(n)_i^w)\subset B(n-k)^{w+k}_{i+k+2}$ and
 \begin{eqnarray}&&\xi(\theta_{l_1}^0\cdots\theta_{l_i}^0\theta_{l_{i+1}}^1\cdots\theta_{l_w}^1\rs_m(n))\nonumber\\
&=&\sum_{i+1\leq j<t\leq w}[k](\theta_{l_1}^0\cdots\theta_{l_i}^0\theta_{l_{i+1}}^1\cdots\widehat{\theta^1_{l_j}}\cdots\widehat{\theta^1_{l_t}}\cdots\theta_{l_w}^1)\cdot\xi(\theta_{l_j}^1\theta_{l_t}^1\rs_m(n)).\nonumber\end{eqnarray}
\end{defn}
\begin{example}\begin{enumerate}\item $[k]$ is a 0-operator of degree $k$.
\item $\delta_0$ is a 0-operator of degree 1, $\delta_2$ is a $(1,0)$-operator of degree 1, and $\delta_3$ is a $(2,0)$-operator of degree 1.
\item $\delta_1\in\mb$ but it is not a $(1,0)$-operator.
\end{enumerate}
\end{example}
\begin{lemma}\label{adop}(1) If $\xi$ is a $(i,0)$-operator of degree $k$ for $i=1,2$, then $Ad_{\delta_1}(\xi)$ is a $(i,0)$-operator of degree $k+1$.

(2) If $\xi$ is a $(2,0)$-operator of degree $k$, then $Ad_{\delta_0}(\xi)$ is a $(1,0)$-operator of degree $k+1$.

(3) If $\xi$ is a $(1,0)$-operator of degree $k$, then $Ad_{\delta_0}(\xi)$ is a $0$-operator of degree $k+1$.
\end{lemma}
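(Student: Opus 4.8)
The plan is to prove all three statements by the same three-step scheme: show the operator lies in $\mb$, compute its grading by composing, and verify on a monomial the defining identity of the claimed type of operator. Membership and grading are immediate: $\mb$ is closed under composition and under $\bz$-linear combinations and $\delta_0,\delta_1\in\mb$, so $Ad_{\delta_0}(\xi),Ad_{\delta_1}(\xi)\in\mb$ whenever $\xi\in\mb$; and composing $\delta_0(B(n)^w_i)\subset B(n-1)^{w+1}_i$ (from (\ref{del0})) and $\delta_1(B(n)^w_i)\subset B(n-1)^{w+1}_{i+1}$ (Lemma \ref{fbim}) with the hypothesis on $\xi$ yields the required containments for $Ad_{\delta_0}(\xi)$, resp.\ $Ad_{\delta_1}(\xi)$. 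It then remains to check the defining identity on a monomial $b=\theta_{l_1}^0\cdots\theta_{l_i}^0\theta_{l_{i+1}}^1\cdots\theta_{l_w}^1\rs_m(n)$, whose $\theta$-part we call $P$.

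For parts (2) and (3) I would first extract the following consequence of the grading hypothesis: a $(j,0)$-operator $\xi$ of degree $k$, $j\in\{1,2\}$, annihilates every monomial with fewer than $j$ factors $\theta^1$, and sends a monomial $\mu\,\rs_{m'}(n')$ with $\mu$ a product of exactly $j$ factors $\theta^1$ into $B(n'-k)^{k+j}_{k+j}$, i.e.\ into the span of monomials $\theta^0_{c_1}\cdots\theta^0_{c_{k+j}}\rs_{m''}(n'-k)$ carrying no factor $\theta^1$. Since $\xi\in\mb$ commutes with both $[1]$ and $(1)$, formula (\ref{del0}) and Lemma \ref{lbz} should then give $\delta_0\bigl(\xi(\mu\,\rs_{m'}(n'))\bigr)=-d\sum_{t\geq0}\theta_t^1\cdot\xi\bigl(\mu\,\rs_{m'-t-1}(n'-1)\bigr)$, and hence $Ad_{\delta_0}(\xi)(\mu'\rs_{m'}(n'))=d\sum_t\xi(\mu'\theta_t^1\rs_{m'-t-1}(n'-1))$ for any $\mu'$ a product of exactly $j-1$ factors $\theta^1$ (so $\mu'=1$ when $j=1$). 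I would then expand $\delta_0\xi(b)$ using the defining formula for $\xi$ together with the display above, and expand $\xi\delta_0(b)$ starting from $\delta_0(b)=[1](P)\cdot\delta_0(\rs_m(n))$ via (\ref{del0}), using throughout that $\delta_0(A\cdot S)=[1](A)\cdot\delta_0(S)$ for a $\theta$-monomial $A$ and that $[k]$ is a ring homomorphism. The computation should show that the part of $\xi\delta_0(b)$ in which $\xi$ extracts only original $\theta^1$-factors of $P$ equals $\delta_0\xi(b)$, hence cancels in the commutator, while the part in which $\xi$ extracts the $\theta^1$-factor produced by $\delta_0$ acting on $\rs_m$ sums to $[k+1](P)\cdot Ad_{\delta_0}(\xi)(\rs_m(n))$ in case (3), and to $\sum_{j=i+1}^w[k+1](\theta_{l_1}^0\cdots\widehat{\theta^1_{l_j}}\cdots\theta_{l_w}^1)\cdot Ad_{\delta_0}(\xi)(\theta^1_{l_j}\rs_m(n))$ in case (2) — exactly the defining formula of a $0$-operator, resp.\ a $(1,0)$-operator, of degree $k+1$.

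Part (1) runs along the same lines with $\delta_1$ in place of $\delta_0$, using the Leibniz-type identity $\delta_1(A\cdot S)=\delta_1(A)\cdot[1](S)+[1](A)\cdot\delta_1(S)$ for a $\theta$-monomial $A$ (from Lemma \ref{lbz}) together with $\delta_1[1]=[1]\delta_1$. Expanding $\delta_1\xi(b)$ by applying this rule to each summand of the defining formula for $\xi(b)$, and $\xi\delta_1(b)$ by first writing $\delta_1(b)$ out via (\ref{del1}) and then applying the defining formula for $\xi$, the terms in which $\xi$ extracts only original $\theta^1$-factors of $P$ should appear identically in both and cancel in the commutator, while the terms in which $\xi$ extracts a $\theta^1$-factor newly created by $\delta_1$ should reassemble as $\sum_j[k+1](\theta_{l_1}^0\cdots\widehat{\theta^1_{l_j}}\cdots\theta_{l_w}^1)\cdot Ad_{\delta_1}(\xi)(\theta^1_{l_j}\rs_m(n))$ when $\xi$ is a $(1,0)$-operator, and over unordered pairs of extracted factors when $\xi$ is a $(2,0)$-operator — the defining formula of a $(1,0)$-, resp.\ $(2,0)$-, operator of degree $k+1$.

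The hard part will be the bookkeeping in (1): when $\delta_1$ splits one of the $\theta^1$-factors that $\xi$ extracts it produces a new factor $\theta_t^1$, and matching the resulting terms against those coming from $\xi\bigl(\delta_1(\theta^1_{l_j}\rs_m(n))\bigr)$ requires a careful reindexing, the ring-homomorphism property of $[k]$, and attention to the ambient-level shifts $[k]$ versus $[k+1]$. In all three parts the hypothesis $\xi\in\mb$ — that $\xi$ commutes with both $[1]$ and $(1)$ — is used essentially: it is exactly what lets $\delta_0$ (resp.\ $\delta_1$) be moved past $\xi$ on the minimal monomials.
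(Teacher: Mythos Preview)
Your proposal is correct and follows essentially the same approach as the paper: both compute $Ad_{\delta_j}(\xi)(b)$ on a general monomial $b$ by expanding $\delta_j\xi(b)$ and $\xi\delta_j(b)$ via Lemma~\ref{lbz}, use the commutation of $\xi$ with $[1]$ and $(1)$ to move $\delta_0$ (resp.\ $\delta_1$) past $\xi$ on the minimal pieces, and observe that the surviving terms assemble into the defining identity of the required operator type. Your presentation is slightly more systematic in that you isolate the $\mb$-membership and grading checks up front and make explicit the key consequence that a $(j,0)$-operator kills monomials with fewer than $j$ factors $\theta^1$; the paper leaves these implicit and, as you anticipate, treats only the $(1,0)$ cases in detail while declaring the $(2,0)$ cases analogous.
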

\begin{proof}Let $b=\theta_{l_1}^0\cdots\theta_{l_i}^0\theta_{l_{i+1}}^1\cdots\theta_{l_w}^1\rs_m(n+1)$.  Let $\xi$ be a $(1,0)$-operator of degree $k$.  Obviously $Ad_{\delta_1}(\xi)\in\mb$ and $Ad_{\delta_1}(\xi)(B(n)_i^w)\subset B(n-k-1)_{i+k+2}^{w+k+1}$.  

By Lemma \ref{lbz} we have
\begin{eqnarray}\label{ad1x0}&&Ad_{\delta_1}(\xi)(b)=\delta_1\xi(b)-\xi(\delta_1(b))\nonumber\\
&=&\delta_1(\sum_{j=i+1}^w[k](\theta_{l_1}^0\cdots\theta_{l_i}^0\theta_{l_{i+1}}^1\cdots\widehat{\theta^1_{l_j}}\cdots\theta_{l_w}^1)\cdot\xi(\theta_{l_j}^1\rs_m(n+1)))\nonumber\\
&&-\xi(\delta_1(\theta_{l_1}^0\cdots\theta_{l_i}^0\rs_0(n+1))\cdot [1](\theta^1_{l_{i+1}}\cdots\theta_{l_w}^1\rs_m(n+1)))\nonumber\\
&&-\xi(
\sum_{j=i+1}^w[1](\theta_{l_1}^0\cdots\theta_{l_i}^0\cdots \widehat{\theta^1_{l_j}}\cdots\theta_{l_w}^1\rs_m(n+1))\cdot \delta_1(\theta_{l_j}^1\rs_0(n+1))).
\end{eqnarray} 
We have
\begin{eqnarray}\label{ad1x1}&&\delta_1(\sum_{j=i+1}^w[k](\theta_{l_1}^0\cdots\theta_{l_i}^0\theta_{l_{i+1}}^1\cdots\widehat{\theta^1_{l_j}}\cdots\theta_{l_w}^1)\cdot\xi(\theta_{l_j}^1\rs_m(n+1)))\nonumber\\
&=&\sum_{j=i+1}^w[k]\delta_1(\theta_{l_1}^0\cdots\theta_{l_i}^0\theta_{l_{i+1}}^1\cdots\widehat{\theta^1_{l_j}}\cdots\theta_{l_w}^1\rs_0(n+1))\cdot[1]\xi(\theta_{l_j}^1\rs_m(n+1)))\nonumber\\
&&+\sum_{j=i+1}^w[k+1](\theta_{l_1}^0\cdots\theta_{l_i}^0\theta_{l_{i+1}}^1\cdots\widehat{\theta^1_{l_j}}\cdots\theta_{l_w}^1)\cdot\delta_1\xi(\theta_{l_j}^1\rs_m(n+1)))\nonumber\\
&=&\sum_{j=i+1}^w[k]\delta_1(\theta_{l_1}^0\cdots\theta_{l_i}^0\rs_0(n+1))[k+1](\theta_{l_{i+1}}^1\cdots\widehat{\theta^1_{l_j}}\cdots\theta_{l_w}^1)\cdot[1]\xi(\theta_{l_j}^1\rs_m(n+1)))\nonumber\\
&&+\sum_{\substack{j,t=i+1\\ j\neq t}}^{w}[k+1](\theta_{l_1}^0\cdots\widehat{\theta^1_{l_j}}\cdots\widehat{\theta^1_{l_t}}\cdots\theta_{l_w}^1)[k](\delta_1(\theta_{l_t}^1\rs_0(n+1)))\cdot[1]\xi(\theta_{l_j}^1\rs_m(n+1)))\nonumber\\
&&+\sum_{j=i+1}^w[k+1](\theta_{l_1}^0\cdots\theta_{l_i}^0\theta_{l_{i+1}}^1\cdots\widehat{\theta^1_{l_j}}\cdots\theta_{l_w}^1)\cdot\delta_1\xi(\theta_{l_j}^1\rs_m(n+1)))\nonumber\\
\end{eqnarray} 
and
\begin{eqnarray}\label{ad1x3}
&&\xi(
\sum_{j=i+1}^w[1](\theta_{l_1}^0\cdots\theta_{l_i}^0\cdots \widehat{\theta^1_{l_j}}\cdots\theta_{l_w}^1\rs_m(n+1))\cdot \delta_1(\theta_{l_j}^1\rs_0(n+1)))\nonumber\\
&=&\sum_{j=i+1}^w\xi([1]((\theta_{l_1}^0\cdots\theta_{l_i}^0\cdots \widehat{\theta^1_{l_j}}\cdots\theta_{l_w}^1\rs_m(n+1)\cdot (l_j\sum_{a\geq 0}\theta_{l_j-2-a}^1\theta_a^0))))\nonumber\\
&=&\sum_{\substack{j,t=i+1\\ j\neq t}}^w[k+1](\theta_{l_1}^0\cdots\theta_{l_i}^0\cdots \widehat{\theta^1_{l_j}}\cdots\widehat{\theta^1_{l_t}}\cdots\theta_{l_w}^1(l_j\sum_{a\geq 0}\theta_{l_j-2-a}^1\theta_a^0))[1]\xi(\theta^1_{l_t}\rs_m(n+1))\nonumber\\
&&+\sum_{j=i+1}^w[k+1](\theta_{l_1}^0\cdots\theta_{l_i}^0\cdots \widehat{\theta^1_{l_j}}\cdots\theta_{l_w}^1)\cdot\xi( [1](l_j\sum_{a\geq 0}\theta_{l_j-2-a}^1\theta_a^0)\rs_m(n+1))\nonumber\\
&=&\sum_{\substack{j,t=i+1\\ j\neq t}}^w[k+1](\theta_{l_1}^0\cdots\theta_{l_i}^0\cdots \widehat{\theta^1_{l_j}}\cdots\widehat{\theta^1_{l_t}}\cdots\theta_{l_w}^1(l_j\sum_{a\geq 0}\theta_{l_j-2-a}^1\theta_a^0))[1]\xi(\theta^1_{l_t}\rs_m(n+1))\nonumber\\
&&+\sum_{j=i+1}^w[k+1](\theta_{l_1}^0\cdots\theta_{l_i}^0\cdots \widehat{\theta^1_{l_j}}\cdots\theta_{l_w}^1)\cdot\xi\delta_1(\theta_{l_j}^1\rs_m(n+1))\nonumber\\
\end{eqnarray} 
and also
\begin{eqnarray}\label{ad1x2}&&\xi(\delta_1(\theta_{l_1}^0\cdots\theta_{l_i}^0\rs_0(n+1))\cdot [1](\theta^1_{l_{i+1}}\cdots\theta_{l_w}^1\rs_m(n+1)))\nonumber\\
&=&[k]\delta_1(\theta_{l_1}^0\cdots\theta_{l_i}^0\rs_0(n+1))\cdot [1]\xi(\theta^1_{l_{i+1}}\cdots\theta_{l_w}^1\rs_m(n+1))\nonumber\\
&=&\sum_{j=i+1}^w[k]\delta_1(\theta_{l_1}^0\cdots\theta_{l_i}^0\rs_0(n+1))\cdot [k+1](\theta_{l_{i+1}}^1\cdots\widehat{\theta^1_{l_j}}\cdots\theta_{l_w}^1)\cdot[1]\xi(\theta_{l_j}^1\rs_m(n+1)))\nonumber\\
\end{eqnarray}
Combine (\ref{ad1x0}), (\ref{ad1x1}), (\ref{ad1x2}) and (\ref{ad1x3}) and we get 

\[Ad_{\delta_1}(\xi)(b)=\sum_{j=i+1}^w[k+1](\theta_{l_1}^0\cdots\theta_{l_i}^0\theta_{l_{i+1}}^1\cdots\widehat{\theta^1_{l_j}}\cdots\theta_{l_w}^1)\cdot Ad_{\delta_1}(\xi)(\theta_{l_j}^1\rs_m(n+1)).\]
Hence $Ad_{\delta_1}(\xi)$ is a $(1,0)$-operator.

Now we compute $Ad_{\delta_0}(\xi)$.  By Lemma \ref{lbz} we have

\begin{eqnarray}\label{ad0x0}&&Ad_{\delta_0}(\xi)(b)=\delta_0\xi(b)-\xi(\delta_0(b))\nonumber\\
&=&\delta_0(\sum_{j=i+1}^w[k](\theta_{l_1}^0\cdots\theta_{l_i}^0\theta_{l_{i+1}}^1\cdots\widehat{\theta^1_{l_j}}\cdots\theta_{l_w}^1)\cdot\xi(\theta_{l_j}^1\rs_m(n+1)))\nonumber\\
&&-\xi([1](\theta_{l_1}^0\cdots\theta_{l_i}^0\theta_{l_{i+1}}^1\cdots\theta_{l_w}^1)\delta_0(\rs_m(n+1)))\nonumber\\
&=&\sum_{j=i+1}^w[k+1](\theta_{l_1}^0\cdots\theta_{l_i}^0\theta_{l_{i+1}}^1\cdots\widehat{\theta^1_{l_j}}\cdots\theta_{l_w}^1)\cdot\delta_0\xi(\theta_{l_j}^1\rs_m(n+1)))\nonumber\\
&&-\xi([1](\theta_{l_1}^0\cdots\theta_{l_i}^0\theta_{l_{i+1}}^1\cdots\theta_{l_w}^1\sum_{a\geq 0}\theta_a^1\rs_{m-a-1}(n+1)))\nonumber\\
&=&\sum_{j=i+1}^w[k+1](\theta_{l_1}^0\cdots\theta_{l_i}^0\theta_{l_{i+1}}^1\cdots\widehat{\theta^1_{l_j}}\cdots\theta_{l_w}^1)\cdot\delta_0\xi(\theta_{l_j}^1\rs_m(n+1)))\nonumber\\
&&-\sum_{j=i+1}^w[k+1](\theta_{l_1}^0\cdots\theta_{l_i}^0\theta_{l_{i+1}}^1\cdots\widehat{\theta_{l_j}^1}\cdots\theta_{l_w}^1\cdot(\sum_{a\geq 0}\theta_a^1))\xi[1](\theta_{l_j}^1\rs_{m-a-1}(n+1)))\nonumber\\
&&-[k+1](\theta_{l_1}^0\cdots\theta_{l_i}^0\theta_{l_{i+1}}^1\cdots\theta_{l_w}^1)\xi[1](\sum_{a\geq 0}\theta_a^1\rs_{m-a-1}(n+1))\nonumber
\end{eqnarray} 

Because $(1)\xi=\xi(1)$, we have
\[\xi[1](\theta_{l_j}^1\rs_{m-a-1}(n+1))=[1](-a-1)\xi(\theta_{l_j}^1\rs_{m}(n+1)).\]

On the other hand, by Lemma \ref{lbz} we have
\[\forall b'\in B(n),~\delta_0(b')=\sum_{a\geq 0}\theta_a^1(n-1)\cdot [1] ((-a-1)(b')).\]

Therefore
\begin{eqnarray}&&(\sum_{a\geq 0}\theta_a^1(n-k))\xi[1](\theta_{l_j}^1\rs_{m-a-1}(n+1)))\nonumber\\
&=&(\sum_{a\geq 0}\theta_a^1(n-k))\cdot(-a-1)[1]\xi(\theta_{l_j}^1\rs_{m}(n+1)))=\delta_0\xi((\theta_{l_j}^1\rs_{m}(n+1)).\nonumber\end{eqnarray}

Hence

\begin{eqnarray}\label{ad0x1}&&Ad_{\delta_0}(\xi)(b)=\delta_0\xi(b)-\xi(\delta_0(b))\nonumber\\
&=&-[k+1](\theta_{l_1}^0\cdots\theta_{l_i}^0\theta_{l_{i+1}}^1\cdots\theta_{l_w}^1)\xi[1](\sum_{a\geq 0}\theta_a^1\rs_{m-a-1}(n+1))\nonumber\\
&=&-[k+1](\theta_{l_1}^0\cdots\theta_{l_i}^0\theta_{l_{i+1}}^1\cdots\theta_{l_w}^1)\xi\delta_0(\rs_{m-a-1}(n+1))\nonumber\\
&=&[k+1](\theta_{l_1}^0\cdots\theta_{l_i}^0\theta_{l_{i+1}}^1\cdots\theta_{l_w}^1)Ad_{\delta_0}(\xi)(\rs_{m-a-1}(n+1)),
\end{eqnarray} 
where the last equality is because by definition $\xi(\rs_m(n+1))=0$.

The case with $\xi$ a $(2,0)$-operator is analogous and left to the readers.  The lemma is proved. 
\end{proof}
The following corollary follows straightforward from Lemma \ref{adop}.
\begin{coro}\label{oper}$Ad_{\delta_0}(Ad_{\delta_1})^k(\delta_2)$ and $Ad_{\delta_0}(Ad_{\delta_1})^{k-1-s}Ad_{\delta_0}(Ad_{\delta_1})^s(\delta_3), ~s=0,\cdots,k-1$ are all $0$-operators of degree $k+2$.
\end{coro}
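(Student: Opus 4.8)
The plan is to deduce the statement directly from Lemma \ref{adop} by bookkeeping, for each composite operator, only two pieces of data: its \emph{type} (a $0$-operator, a $(1,0)$-operator, or a $(2,0)$-operator) and its \emph{degree}. All the base cases I need are recorded in the Example above: $\delta_2$ is a $(1,0)$-operator of degree $1$ and $\delta_3$ is a $(2,0)$-operator of degree $1$. Moreover every composite operator occurring below stays in $\mb$, since $\mb$ is closed under composition and each $\delta_i\in\mb$, so Lemma \ref{adop} genuinely applies at every stage.

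First I would treat the $\delta_2$-term. By Lemma \ref{adop}(1), applying $Ad_{\delta_1}$ to a $(1,0)$-operator produces a $(1,0)$-operator whose degree is one larger; an immediate induction on $k$ then gives that $(Ad_{\delta_1})^k(\delta_2)$ is a $(1,0)$-operator of degree $k+1$. Applying Lemma \ref{adop}(3), which turns a $(1,0)$-operator of degree $k+1$ into a $0$-operator of degree $k+2$, shows that $Ad_{\delta_0}(Ad_{\delta_1})^k(\delta_2)$ is a $0$-operator of degree $k+2$.

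Next I would treat the $\delta_3$-terms, reading from the innermost operator outward. Applying $Ad_{\delta_1}$ to $\delta_3$ a total of $s$ times and using Lemma \ref{adop}(1) each time shows $(Ad_{\delta_1})^s(\delta_3)$ is a $(2,0)$-operator of degree $s+1$. Then Lemma \ref{adop}(2) converts a $(2,0)$-operator of degree $s+1$ into a $(1,0)$-operator of degree $s+2$, so $Ad_{\delta_0}(Ad_{\delta_1})^s(\delta_3)$ is a $(1,0)$-operator of degree $s+2$. Applying $Ad_{\delta_1}$ another $k-1-s$ times and using Lemma \ref{adop}(1) each time keeps it a $(1,0)$-operator and raises the degree to $(s+2)+(k-1-s)=k+1$; finally Lemma \ref{adop}(3) turns this into a $0$-operator of degree $k+2$. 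Hence $Ad_{\delta_0}(Ad_{\delta_1})^{k-1-s}Ad_{\delta_0}(Ad_{\delta_1})^s(\delta_3)$ is a $0$-operator of degree $k+2$ for each $s=0,\dots,k-1$, completing the proof.

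There is no genuine obstacle here: the entire argument is a short induction on the number of $Ad_{\delta_1}$'s layered on top of the closure properties in Lemma \ref{adop}, and the only point needing a little care is the degree arithmetic together with the fact that each intermediate operator still lies in $\mb$, which is automatic from the types assigned by Lemma \ref{adop}.
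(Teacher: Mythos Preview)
Your proof is correct and follows exactly the approach the paper intends: the paper states only that the corollary ``follows straightforward from Lemma \ref{adop}'', and your argument simply makes this explicit by tracking the type and degree through repeated applications of Lemma \ref{adop}(1)--(3). The degree arithmetic is right in both cases, and there is nothing to add.
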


Define $\Xi(k):=\left\{Ad_{\delta_0}(Ad_{\delta_1})^k(\delta_2)+\sum_{s=0}^{k-1}Ad_{\delta_0}(Ad_{\delta_1})^{k-1-s}Ad_{\delta_0}(Ad_{\delta_1})^s(\delta_3)\right\}$, then $\Xi(k)$ is a 0-operator of degree $k+2$.  By Proposition \ref{vkmain} we have for any $n,m,k\geq 0$ 
$$\Xi(k)(\rs_m(n))\in(d-3)A^*(S^{[n-k-2]}).$$  

Therefore we have the following corollary.
\begin{coro}\label{vanxi}For any $\xi_1,\xi_2:\bigoplus_{n\geq 0}B(n)\ra\bigoplus_{n\geq0}B(n)$ two group homomorphisms and $b'\in B(n)$, we have 
\[\xi_1\cdot\Xi(k)\cdot\xi_2(b') \in (d-3)\oplus_{n\geq 0}A^*(S^{[n]}).\]
In particular for any $n,m,k,l\geq0$ and any $i_1,\cdots,i_l\in\{0,1,2,3\}$
\[Ad_{\delta_{i_1}}\cdots Ad_{\delta_{i_l}}(\Xi(k))(\rs_m(n))\in (d-3)A^*(S^{[n-k-2-l]}).\]
\end{coro}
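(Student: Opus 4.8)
The plan is to deduce the corollary formally from two facts already in hand: $\Xi(k)$ is a $0$-operator of degree $k+2$ (Corollary \ref{oper}), and $\Xi(k)(\rs_m(n))\in(d-3)A^*(S^{[n-k-2]})$ for all $m,n\geq 0$ (Proposition \ref{vkmain}). The first step is to propagate the divisibility statement from the single class $\rs_m(n)$ to all of $B(n)$. By the defining identity of a $0$-operator of degree $k+2$, on any generator of $B(n)$ one has
\[\Xi(k)\bigl(\theta_{l_1}^0\cdots\theta_{l_i}^0\theta_{l_{i+1}}^1\cdots\theta_{l_w}^1\rs_m(n)\bigr)=[k+2]\bigl(\theta_{l_1}^0\cdots\theta_{l_i}^0\theta_{l_{i+1}}^1\cdots\theta_{l_w}^1\bigr)\cdot\Xi(k)(\rs_m(n)),\]
where the first factor is a generator of $B(n-k-2)$ (with $\rs_0=\mathbf{1}$) and the second is $(d-3)$ times an element of $B(n-k-2)$ — which is exactly the form the relevant classes take at the ends of the proofs of Propositions \ref{wkmain} and \ref{vkmain}, namely $(d-3)$ times $\bz$-combinations of products of $\theta^0$'s with a single Segre factor $\rs$. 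Since a product of a $\theta$-only monomial with such a class again carries a single Segre factor, it follows that $\Xi(k)(B(n))\subseteq(d-3)B(n-k-2)$, hence by $\bz$-linearity $\Xi(k)\bigl(\bigoplus_{n}B(n)\bigr)\subseteq(d-3)\bigoplus_{n}B(n)$.

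Granting this, the first assertion is immediate: for group homomorphisms $\xi_1,\xi_2\colon\bigoplus_n B(n)\ra\bigoplus_n B(n)$ and $b'\in B(n)$, we have $\xi_2(b')\in\bigoplus_n B(n)$, so $\Xi(k)(\xi_2(b'))=(d-3)w$ for some $w\in\bigoplus_n B(n)$; since a homomorphism of abelian groups is $\bz$-linear, $\xi_1\bigl((d-3)w\bigr)=(d-3)\xi_1(w)\in(d-3)\bigoplus_n B(n)\subseteq(d-3)\bigoplus_n A^*(S^{[n]})$.

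For the ``in particular'' statement I would expand the iterated adjoint: using $Ad_{\delta_i}(\zeta)=\delta_i\zeta-\zeta\delta_i$ repeatedly, $Ad_{\delta_{i_1}}\cdots Ad_{\delta_{i_l}}(\Xi(k))$ is a finite $\bz$-linear combination of operators $\delta_{j_1}\cdots\delta_{j_a}\cdot\Xi(k)\cdot\delta_{j_{a+1}}\cdots\delta_{j_l}$ with $0\leq a\leq l$ and $\{j_1,\dots,j_l\}$ a reordering of $\{i_1,\dots,i_l\}$. Each summand has the shape $\xi_1\cdot\Xi(k)\cdot\xi_2$ with $\xi_1,\xi_2$ compositions of the $\delta_i$, hence honest group homomorphisms $\bigoplus_n B(n)\ra\bigoplus_n B(n)$, so by the first part applied with $b'=\rs_m(n)$ it sends $\rs_m(n)$ into $(d-3)\bigoplus_n A^*(S^{[n]})$; moreover each $\delta_i$ lowers the index $n$ by $1$ and $\Xi(k)$ lowers it by $k+2$, so every summand lands in $A^*(S^{[n-k-2-l]})$. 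Summing the finitely many terms gives $Ad_{\delta_{i_1}}\cdots Ad_{\delta_{i_l}}(\Xi(k))(\rs_m(n))\in(d-3)A^*(S^{[n-k-2-l]})$.

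The one point that genuinely requires attention is the claim, used in the first step, that $\Xi(k)(\rs_m(n))$ is divisible by $d-3$ \emph{inside $B$}, not merely inside $A^*$: an a priori arbitrary homomorphism $\xi_1$ is only $\bz$-linear, so one may legitimately ``divide its argument by $d-3$'' only once that argument is known to lie in $(d-3)B$. This is precisely what inspection of the explicit classes produced in Propositions \ref{wkmain} and \ref{vkmain} supplies — each is visibly $(d-3)$ times a member of $B$, and the maps $\delta_i\colon B\ra B$ propagate this form through the induction of Proposition \ref{vkmain} — so that no genuine loss occurs; alternatively one may invoke that the $\theta^0_i,\theta^1_i$ and $\rs_m$ generate $A^*(S^{[n]})$, in which case $B(n)=A^*(S^{[n]})$ and the distinction disappears. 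Everything else in the argument is formal bookkeeping with $\bz$-linear maps and the $0$-operator identity.
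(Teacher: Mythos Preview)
Your argument is correct and is precisely the reasoning the paper leaves implicit: the text simply writes ``Therefore we have the following corollary'' after recording that $\Xi(k)$ is a $0$-operator and that $\Xi(k)(\rs_m(n))\in(d-3)A^*(S^{[n-k-2]})$, so your proof is just an explicit unpacking of that ``therefore.'' Your observation about divisibility in $B$ versus in $A^*$ is the right point to flag; the resolution you give (that the explicit classes in the proofs of Propositions~\ref{wkmain} and~\ref{vkmain} are visibly $(d-3)$ times elements of $B$, and that the $\delta_i$ preserve $B$) is what makes the argument go through, whereas your alternative suggestion that $B(n)=A^*(S^{[n]})$ is not something the paper establishes and should not be relied upon.
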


\subsection{The final proof.}
We have the following proposition.
\begin{prop}\label{midmain}For any $r,k\geq 0$, we have
\[\left(\delta_2(\sum_{\sum_{l=1}^ki_l=r}\delta_{i_1}\cdots\delta_{i_k})+\delta_3(\sum_{\sum_{l=1}^kj_l=r-1}\delta_{j_1}\cdots\delta_{j_k})\right)\rs_m(n)\in (d-3)A^*(S^{[n-k-1]}).\]
\end{prop}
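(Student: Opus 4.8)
The plan is to prove the statement by induction on $k$, after enlarging it to a family of assertions indexed by an auxiliary parameter $c\ge 0$. For brevity write $\mathsf{D}^{(k)}_r:=\sum_{\sum_{l=1}^k i_l=r}\delta_{i_1}\cdots\delta_{i_k}$ (with $\mathsf{D}^{(k)}_r=0$ for $r<0$), and for a $(1,0)$-operator $\eta_2$ and a $(2,0)$-operator $\eta_3$ put $E_{(\eta_2,\eta_3)}(k,r):=\eta_2\,\mathsf{D}^{(k)}_r+\eta_3\,\mathsf{D}^{(k)}_{r-1}$, so that Proposition \ref{midmain} is exactly the assertion $E_{(\delta_2,\delta_3)}(k,r)(\rs_m(n))\in(d-3)A^*(S^{[n-k-1]})$. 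Introduce
\[\Psi_c:=(Ad_{\delta_1})^c(\delta_2)+\sum_{s=0}^{c-1}(Ad_{\delta_1})^{c-1-s}Ad_{\delta_0}(Ad_{\delta_1})^s(\delta_3),\]
so $\Psi_0=\delta_2$; by Lemma \ref{adop}, $\Psi_c$ is a $(1,0)$-operator and $(Ad_{\delta_1})^c(\delta_3)$ a $(2,0)$-operator, both of degree $c+1$. The generalized claim $P(c,k)$ to be proved is: for all $r,m,n$,
\[E_{(\Psi_c,\,(Ad_{\delta_1})^c(\delta_3))}(k,r)(\rs_m(n))\in(d-3)A^*(S^{[n-k-1]}),\]
and Proposition \ref{midmain} is $P(0,k)$. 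Two algebraic identities will drive the induction: first $Ad_{\delta_0}(\Psi_c)=\Xi(c)$, which is immediate from the definition of $\Xi(c)$ and linearity of $Ad_{\delta_0}$; second $Ad_{\delta_1}(\Psi_c)+Ad_{\delta_0}\big((Ad_{\delta_1})^c(\delta_3)\big)=\Psi_{c+1}$, a straightforward regrouping of the defining sum.

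The base case $k=0$ is trivial: $\mathsf{D}^{(0)}_0=\mathrm{id}$ and $\mathsf{D}^{(0)}_r=0$ otherwise, so the expression in $P(c,0)$ is one of $\Psi_c(\rs_m(n))$, $(Ad_{\delta_1})^c(\delta_3)(\rs_m(n))$ or $0$; the first two vanish since a $(1,0)$-operator (resp. a $(2,0)$-operator) of degree $e$ sends $B(n)^0_0$ into $B(n-e)^e_{e+1}=0$ (resp. into $B(n-e)^e_{e+2}=0$), while $\rs_m(n)\in B(n)^0_0$. For the inductive step, fix $k\ge1$, assume $P(c',k')$ for all $c'\ge0$ and $k'<k$, and write $\eta_2=\Psi_c$, $\eta_3=(Ad_{\delta_1})^c(\delta_3)$. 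Splitting off the leftmost operator, $\mathsf{D}^{(k)}_r=\sum_{j=0}^3\delta_j\,\mathsf{D}^{(k-1)}_{r-j}$, hence
\[E_{(\eta_2,\eta_3)}(k,r)=\sum_{j=0}^3\big(\eta_2\delta_j\,\mathsf{D}^{(k-1)}_{r-j}+\eta_3\delta_j\,\mathsf{D}^{(k-1)}_{r-1-j}\big).\]
For $j=2,3$ I would regroup directly (no commutator) into $\eta_2\,E_{(\delta_2,\delta_3)}(k-1,r-2)+\eta_3\,E_{(\delta_2,\delta_3)}(k-1,r-3)$. For $j=0,1$ I would use $\eta_i\delta_j=\delta_j\eta_i-Ad_{\delta_j}(\eta_i)$ to pull $\delta_j$ to the front; the ``main'' parts collect into $\delta_0\,E_{(\eta_2,\eta_3)}(k-1,r)+\delta_1\,E_{(\eta_2,\eta_3)}(k-1,r-1)$, and the commutator remainders, using the two identities above, collect into $-\Xi(c)\,\mathsf{D}^{(k-1)}_r-E_{(\Psi_{c+1},\,(Ad_{\delta_1})^{c+1}(\delta_3))}(k-1,r-1)$. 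Altogether
\begin{align*}
E_{(\eta_2,\eta_3)}(k,r)={}&\delta_0\,E_{(\eta_2,\eta_3)}(k-1,r)+\delta_1\,E_{(\eta_2,\eta_3)}(k-1,r-1)\\
&+\eta_2\,E_{(\delta_2,\delta_3)}(k-1,r-2)+\eta_3\,E_{(\delta_2,\delta_3)}(k-1,r-3)\\
&-\Xi(c)\,\mathsf{D}^{(k-1)}_r-E_{(\Psi_{c+1},\,(Ad_{\delta_1})^{c+1}(\delta_3))}(k-1,r-1).
\end{align*}

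Applying this identity to $\rs_m(n)$, every term lands in $(d-3)A^*(S^{[n-k-1]})$: the first two are $\delta_0,\delta_1$ applied to a $P(c,k-1)$-expression; the next two are $\eta_2,\eta_3$ (group homomorphisms) applied to a $P(0,k-1)$-expression; $\Xi(c)\,\mathsf{D}^{(k-1)}_r(\rs_m(n))$ is handled by Corollary \ref{vanxi} (take $\xi_1=\mathrm{id}$, $\xi_2=\mathsf{D}^{(k-1)}_r$, $b'=\rs_m(n)$); and the last is a $P(c+1,k-1)$-expression. Summing gives $P(c,k)$, hence in particular Proposition \ref{midmain}. The induction is genuinely on $k$ alone — at stage $k$ it uses $P(0,k-1)$, $P(c,k-1)$, $P(c+1,k-1)$, all with smaller $k$ — and it terminates because a $(1,0)$- or $(2,0)$-operator kills $\rs_m(n)$.

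The main obstacle is spotting the right enlargement, i.e. producing $\Psi_c$ and recognizing that the commutator error from pushing $\Psi_c$ past $\delta_0$ is exactly $Ad_{\delta_0}(\Psi_c)=\Xi(c)$ — already known, via Proposition \ref{vkmain} and Corollary \ref{vanxi}, to vanish modulo $(d-3)$ — while the errors from pushing $\Psi_c$ past $\delta_1$ and $(Ad_{\delta_1})^c(\delta_3)$ past $\delta_0$ add up to $\Psi_{c+1}$, so the recursion reproduces the same family with $c$ incremented. Once this is seen, the remainder is bookkeeping: carrying out the eightfold regrouping of the terms $\eta_i\delta_j\,\mathsf{D}^{(k-1)}$ with the correct index shifts, verifying the two commutator identities, and invoking Lemma \ref{adop} to know which operators are $(1,0)$- or $(2,0)$-operators. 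No geometric input beyond \S4.1 and \S4.2 is needed.
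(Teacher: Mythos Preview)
Your proof is correct and takes a genuinely different route from the paper's.

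The paper proceeds in two stages: first it reduces (Proposition~\ref{simain}) the full sum $\rii(k,r)$ to the subsum $\ri(k,r)$ with all indices in $\{0,1\}$; then it introduces the family $\rj(k,r,s)$ and, via Lemma~\ref{cad}, slides one $\delta_{i_j}$ at a time from the product into an $Ad_{\delta_{i_j}}$, until $\rj(k_0+1,r,k_0+1)$ is expressed as a sum of iterated commutators applied to $\Xi(s)$, whence Corollary~\ref{vanxi} finishes. You instead enlarge directly to the one-parameter family $P(c,k)$ and recurse on $k$: in your decomposition $\mathsf{D}^{(k)}_r=\sum_{j=0}^3\delta_j\,\mathsf{D}^{(k-1)}_{r-j}$ the terms $j=2,3$ regroup into $\eta_2,\eta_3$ applied to $E_{(\delta_2,\delta_3)}(k-1,\cdot)$, i.e.\ instances of $P(0,k-1)$, so Proposition~\ref{simain} is bypassed entirely; the $j=0,1$ commutator remainders collapse, via your identities $Ad_{\delta_0}(\Psi_c)=\Xi(c)$ and $Ad_{\delta_1}(\Psi_c)+Ad_{\delta_0}\big((Ad_{\delta_1})^c\delta_3\big)=\Psi_{c+1}$, into one $\Xi(c)$-term (handled by Corollary~\ref{vanxi}) plus one $P(c+1,k-1)$-term. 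This is leaner: it dispenses with Proposition~\ref{simain} and Lemma~\ref{cad}, and the two commutator identities make transparent why the family $\{\Psi_c\}$ is the right enlargement. The paper's version is more modular, isolating each reduction in its own lemma.

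One cosmetic slip: since $\Psi_c$ and $(Ad_{\delta_1})^c(\delta_3)$ have degree $c+1$, the target of $P(c,k)$ should be $(d-3)A^*(S^{[n-k-c-1]})$ rather than $(d-3)A^*(S^{[n-k-1]})$; this does not affect the argument.
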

Theorem \ref{main} is an easy corollary to Proposition \ref{midmain}.
\begin{proof}[Proof of Theorem \ref{main}]We do induction on $k$.  We have
\begin{eqnarray}&&\sum_{\sum_{l=1}^{k+1}i_l=k+1}\delta_{i_1}\cdots\delta_{i_{k+1}}(\rs_m(n))\nonumber\\
&=&\delta_0\cdot\left(\sum_{\sum_{l=1}^{k}s_l=k+1}\delta_{s_1}\cdots\delta_{s_{k}}\right)(\rs_m(n))+\delta_1\cdot\left(\sum_{\sum_{l=1}^{k}s_l=k}\delta_{s_1}\cdots\delta_{s_{k}}\right)(\rs_m(n))\nonumber\\
&&+\delta_2\cdot\left(\sum_{\sum_{l=1}^{k}s_l=k-1}\delta_{s_1}\cdots\delta_{s_{k}}\right)(\rs_m(n))+\delta_3\cdot\left(\sum_{\sum_{l=1}^{k}s_l=k-2}\delta_{s_1}\cdots\delta_{s_{k}}\right)(\rs_m(n)).\nonumber
\end{eqnarray}
By Remark \ref{lag}, we have 
$$\delta_0\left(\sum_{\sum_{l=1}^{k}s_l=k+1}\delta_{s_1}\cdots\delta_{s_{k}}\right)(\rs_m(n))=\delta_0\left(\sum_{\sum_{l=1}^{k}s_l=k+1}\delta_{s_1}\cdots\delta_{s_{k}}(\rs_m(n))\right)=\delta_0(0)=0.$$

By induction assumption,  we have
$$\delta_1\cdot\left(\sum_{\sum_{l=1}^{k}s_l=k}\delta_{s_1}\cdots\delta_{s_{k}}\right)(\rs_m(n))=\delta_1\cdot\left(\sum_{\sum_{l=1}^{k}s_l=k}\delta_{s_1}\cdots\delta_{s_{k}}(\rs_m(n))\right)\in (d-3)A^*(S^{[n-k]}).$$

Finally by Proposition \ref{midmain}, we have
\begin{eqnarray}&&\delta_2\cdot\left(\sum_{\sum_{l=1}^{k}s_l=k-1}\delta_{s_1}\cdots\delta_{s_{k}}\right)(\rs_m(n))\nonumber\\
&+&\delta_3\cdot\left(\sum_{\sum_{l=1}^{k}s_l=k-2}\delta_{s_1}\cdots\delta_{s_{k}}\right)(\rs_m(n))\in (d-3)A^*(S^{[n-k]}).\nonumber
\end{eqnarray}
Hence the theorem.
\end{proof}
Now we only need to prove Proposition \ref{midmain}.  Denote 
$$\rii(k,r):=\delta_2(\sum_{\sum_{l=1}^ki_l=r}\delta_{i_1}\cdots\delta_{i_k})+\delta_3(\sum_{\sum_{l=1}^kj_l=r-1}\delta_{j_1}\cdots\delta_{j_k});$$
and 
$$\ri(k,r):=\delta_2(\sum_{\substack{i_1,\cdots,i_k\in\{0,1\}\\\sum_{l=1}^ki_l=r}}\delta_{i_1}\cdots\delta_{i_k})+\delta_3(\sum_{\substack{j_1,\cdots,j_k\in\{0,1\}\\ \sum_{l=1}^kj_l=r-1}}\delta_{j_1}\cdots\delta_{j_k}).$$
\begin{prop}\label{simain}The following two statements are equivalent:

(1) For any $r,k\geq 0$, $\rii(k,r)(\rs_m(n))\in (d-3)A^*(S^{[n-k-1]})$;

(2) For any $r,k\geq 0$, $\ri(k,r)(\rs_m(n))\in (d-3)A^*(S^{[n-k-1]})$.

In other words, Proposition \ref{midmain} is equivalent to say that for any $r,k\geq 0$
\[\left(\delta_2(\sum_{\substack{i_1,\cdots,i_k\in\{0,1\}\\\sum_{l=1}^ki_l=r}}\delta_{i_1}\cdots\delta_{i_k})+\delta_3(\sum_{\substack{j_1,\cdots,j_k\in\{0,1\}\\ \sum_{l=1}^kj_l=r-1}}\delta_{j_1}\cdots\delta_{j_k})\right)(\rs_m(n))\in(d-3)A^*(S^{[n-k-1]})\]
\end{prop}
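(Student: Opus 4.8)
The plan is to reduce the equivalence to one purely algebraic identity between $\rii$ and $\ri$, after which both implications become formal.

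First I would observe that both families are graded components of powers of two operators. As in the paragraph after Lemma \ref{fbim}, grade the subalgebra of $\End\big(\bigoplus_{n\ge0}B(n)\big)$ generated by $\delta_0,\delta_1,\delta_2,\delta_3$ by the shift on the lower index of $B(n)^w_i$: an operator $X$ there decomposes as $X=\sum_{s\ge0}X_{(s)}$, with $X_{(s)}$ sending $B(n)^w_i$ into $\bigoplus_j B(j)^\bullet_{i+s}$, with $\delta_c=(\delta_c)_{(c)}$, and the grading multiplicative, $(XY)_{(s)}=\sum_{a+b=s}X_{(a)}Y_{(b)}$. With $f:=\delta_0+\delta_1+\delta_2+\delta_3$ (the operator of \S3) and $g:=\delta_0+\delta_1$ one has $(f^k)_{(r)}=\sum_{\sum i_l=r}\delta_{i_1}\cdots\delta_{i_k}$ (over $i_l\in\{0,1,2,3\}$), $(g^k)_{(r)}=\sum_{\sum i_l=r}\delta_{i_1}\cdots\delta_{i_k}$ (over $i_l\in\{0,1\}$), and directly from the definitions
\[\rii(k,r)=\delta_2(f^k)_{(r)}+\delta_3(f^k)_{(r-1)},\qquad \ri(k,r)=\delta_2(g^k)_{(r)}+\delta_3(g^k)_{(r-1)}.\]

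Next I would apply the telescoping identity $f^k-g^k=\sum_{j=0}^{k-1}g^j(f-g)f^{k-1-j}$ (valid in any associative ring) with $f-g=\delta_2+\delta_3$. Taking $(-)_{(s)}$ and using multiplicativity,
\[\big(g^j(\delta_2+\delta_3)f^{k-1-j}\big)_{(s)}=\sum_{p\ge0}(g^j)_{(p)}\Big(\delta_2(f^{k-1-j})_{(s-2-p)}+\delta_3(f^{k-1-j})_{(s-3-p)}\Big)=\sum_{p\ge0}(g^j)_{(p)}\,\rii(k-1-j,\,s-2-p).\]
Substituting this into $\rii(k,r)-\ri(k,r)=\delta_2(f^k-g^k)_{(r)}+\delta_3(f^k-g^k)_{(r-1)}$ and regrouping --- the $\delta_2$-term with $(g^j)_{(p)}$ and the $\delta_3$-term with $(g^j)_{(p-1)}$ share the right factor $\rii(k-1-j,r-2-p)$, and $\delta_2(g^j)_{(p)}+\delta_3(g^j)_{(p-1)}=\ri(j,p)$ --- I obtain the operator identity
\[\rii(k,r)-\ri(k,r)=\sum_{j=0}^{k-1}\sum_{p\ge0}\ri(j,p)\,\rii(k-1-j,\,r-2-p),\]
a finite sum, since $\ri(j,p)=0$ for $p>j$ and $\rii(K,R)=0$ for $R<0$. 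I would double-check this for $k=1,2$ against the ``peel off the last factor'' recursions $\rii(k,r)=\sum_{c=0}^{3}\rii(k-1,r-c)\delta_c$ and $\ri(k,r)=\ri(k-1,r)\delta_0+\ri(k-1,r-1)\delta_1$.

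Granting this identity, the equivalence is immediate. Every $\ri(j,p)$ is a $\bz$-linear combination of composites of the $\delta_i$, hence additive and $B$-preserving; reading condition (1) (and (2)) compatibly with the $B$-structure --- i.e. as ``$\rii(k,r)(\rs_m(n))$ is $d-3$ times a class in $B(n-k-1)$'', the form in which the recursion results of \S\S3--4 actually produce such classes --- any $\ri(j,p)$ preserves this property. So, assuming (1): each $\rii(k-1-j,r-2-p)(\rs_m(n))\in(d-3)A^*$, hence by the identity $\rii(k,r)(\rs_m(n))-\ri(k,r)(\rs_m(n))\in(d-3)A^*$, and subtracting off $\rii(k,r)(\rs_m(n))\in(d-3)A^*$ gives (2). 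Assuming (2), I would induct on $k$: $\rii(0,r)=\ri(0,r)$, and for $k\ge1$ the inductive hypothesis makes every $\rii(k-1-j,\cdot)(\rs_m(n))$ ($0\le j\le k-1$) lie in $(d-3)A^*$, so the identity gives $\rii(k,r)(\rs_m(n))=\ri(k,r)(\rs_m(n))+\sum_{j,p}\ri(j,p)\big(\rii(k-1-j,r-2-p)(\rs_m(n))\big)\in(d-3)A^*$, which is (1). The only step requiring genuine care is the derivation of the displayed operator identity --- the $(-)_{(\bullet)}$ bookkeeping and, above all, the regrouping that turns the $\delta_3$-terms into copies of $\ri(j,p)$ with exactly the right second index; everything after it is purely formal.
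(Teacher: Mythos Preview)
Your approach is essentially the same as the paper's: both proofs rest on an operator identity expressing $\rii(k,r)-\ri(k,r)$ as a sum of products of lower $\ri$'s and $\rii$'s, from which the equivalence follows formally. The difference is in packaging. The paper writes down \emph{two} symmetric identities ``by direct observation'':
\[
\rii(k,r)-\ri(k,r)=\sum_{s,m}\rii(s,m)\,\ri(k{-}s{-}1,r{-}m{-}2)
\quad\text{and}\quad
\rii(k,r)-\ri(k,r)=\sum_{s,m}\ri(s,m)\,\rii(k{-}s{-}1,r{-}m{-}2),
\]
and uses one for each implication directly, with no induction needed. You derive only the second of these (your telescoping identity $f^k-g^k=\sum g^j(f-g)f^{k-1-j}$ followed by taking graded pieces is a clean, checkable way to obtain it), and then handle $(2)\Rightarrow(1)$ by induction on $k$ instead of invoking the first identity. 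Both routes are equally valid; your derivation is more systematic, while the paper's two-identity version avoids the induction. Your acknowledgement that one must read ``$\in(d-3)A^*$'' as ``$\in(d-3)B$'' in order to push the divisibility through the operators $\ri(j,p)$ is a point the paper leaves implicit, and you are right that all the divisibility results of \S\S3--4 are actually proved in this stronger form.
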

\begin{proof}By a direct observation, one can see that
\begin{eqnarray}\rii(k,r)&=&\sum_{\substack{0\leq m\leq r-2\\ 0\leq s\leq k-1}}\sum_{\sum_{l=1}^st_l=m}\delta_2\delta_{t_1}\cdots\delta_{t_s}\ri(k-s-1,r-m-2)\nonumber\\
&+&\sum_{\substack{0\leq m\leq r-3\\ 0\leq s\leq k-1}}\sum_{\sum_{l=1}^st_l=m}\delta_3\delta_{t_1}\cdots\delta_{t_s}\ri(k-s-1,r-m-3)+\ri(k,r);\nonumber
\end{eqnarray}
\begin{eqnarray}\ri(k,r)&=&\rii(k,r)-\sum_{\substack{0\leq m\leq r-2\\ 0\leq s\leq k-1}}\sum_{\substack{t_1,\cdots,t_s\in\{0,1\} \\\sum_{l=1}^st_l=m}}\delta_2\delta_{t_1}\cdots\delta_{t_s}\rii(k-s-1,r-m-2)\nonumber\\
&-&\sum_{\substack{0\leq m\leq r-3\\ 0\leq s\leq k-1}}\sum_{\substack{t_1,\cdots,t_s\in\{0,1\} \\\sum_{l=1}^st_l=m}}\delta_3\delta_{t_1}\cdots\delta_{t_s}\rii(k-s-1,r-m-3)\nonumber\end{eqnarray}
Hence the proposition.
\end{proof}
Define
\begin{eqnarray}\rj(k,r,s)&:=&\sum_{\substack{i_1,\cdots,i_k\in\{0,1\}\\\sum_{l=1}^ki_l=r}}Ad_{\delta_{i_1}}\cdots Ad_{\delta_{i_s}}(\delta_2)\cdot\delta_{i_{s+1}}\cdots\delta_{i_k}\nonumber\\
&&+\sum_{\substack{j_1,\cdots,j_k\in\{0,1\}\\ \sum_{l=1}^kj_l=r-1}}Ad_{\delta_{j_1}}\cdots Ad_{\delta_{j_s}}(\delta_3)\cdot\delta_{j_{s+1}}\cdots\delta_{j_k}.\nonumber\end{eqnarray}
We let $\rj(k,r,s)=0$ if $s<0$ or $s>k$.  Notice that $\rj(k,r,0)=\ri(k,r)$.
\begin{lemma}\label{cad}Fix $0\leq s_0\leq k_0$.  If we assume $\rj(k,r,s_0)(\rs_m(n))\in (d-3)A^*(S^{[n-k-1]})$ for all $r\geq 0$ and $k\leq k_0$, then the following two statements are equivalent:

(1) For any $r\geq 0$, $\rj(k_0+1,r,s_0)(\rs_m(n))\in (d-3)A^*(S^{[n-k-1]})$;

(2) For any $r\geq 0$, $\rj(k_0+1,r,s_0+1)(\rs_m(n))\in (d-3)A^*(S^{[n-k-1]})$.
 \end{lemma}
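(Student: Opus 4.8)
The plan is to derive an explicit recursion expressing $\rj(k_0+1,r,s_0)+\rj(k_0+1,r,s_0+1)$ through $\rj(k_0,\cdot\,,s_0)$, after which the hypothesis forces the equivalence. The only ingredient is the trivial commutator identity $\eta\,\delta_a=\delta_a\,\eta-Ad_{\delta_a}(\eta)$, valid for any group homomorphism $\eta$ and any $a\in\{0,1\}$, together with two reindexings of the sums defining $\rj$.

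First I would apply this identity inside the defining sum of $\rj(k_0+1,r,s_0)$, with $a=i_{s_0+1}$ and $\eta=Ad_{\delta_{i_1}}\cdots Ad_{\delta_{i_{s_0}}}(\delta_2)$ (and similarly with $\delta_3$ in place of $\delta_2$): the summand
\[Ad_{\delta_{i_1}}\cdots Ad_{\delta_{i_{s_0}}}(\delta_2)\cdot\delta_{i_{s_0+1}}\cdot\delta_{i_{s_0+2}}\cdots\delta_{i_{k_0+1}}\]
becomes
\[\delta_{i_{s_0+1}}\cdot Ad_{\delta_{i_1}}\cdots Ad_{\delta_{i_{s_0}}}(\delta_2)\cdot\delta_{i_{s_0+2}}\cdots\delta_{i_{k_0+1}}\ -\ Ad_{\delta_{i_{s_0+1}}}Ad_{\delta_{i_1}}\cdots Ad_{\delta_{i_{s_0}}}(\delta_2)\cdot\delta_{i_{s_0+2}}\cdots\delta_{i_{k_0+1}}.\]
Then I would reindex. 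The permutation of coordinates $(i_1,\dots,i_{k_0+1})\mapsto(i_{s_0+1},i_1,\dots,i_{s_0},i_{s_0+2},\dots,i_{k_0+1})$ is a bijection of $\{0,1\}^{k_0+1}$ preserving $\sum_l i_l$, and applied to the second group of terms it yields exactly $-\rj(k_0+1,r,s_0+1)$ (the $\delta_2$- and $\delta_3$-blocks transform identically, the built-in shift of the $\delta_3$-block being untouched). In the first group of terms I would split off the value $a:=i_{s_0+1}\in\{0,1\}$ and pull $\delta_a$ to the front; what remains is a sum over $k_0$ indices summing to $r-a$ (in the appropriate shifted sense for the $\delta_3$-block), i.e.\ precisely $\delta_a\cdot\rj(k_0,r-a,s_0)$. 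Collecting everything gives, for every $r\ge0$ (with the convention that $\rj$ is zero whenever a defining sum is empty, e.g.\ for negative second argument),
\[\rj(k_0+1,r,s_0)+\rj(k_0+1,r,s_0+1)=\delta_0\cdot\rj(k_0,r,s_0)+\delta_1\cdot\rj(k_0,r-1,s_0).\]

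Finally, since each $\delta_j$ is $\bz$-linear it maps $(d-3)A^*(S^{[\bullet]})$ into itself; by the hypothesis applied with $k=k_0$ (for all $r\ge0$) both $\rj(k_0,r,s_0)(\rs_m(n))$ and $\rj(k_0,r-1,s_0)(\rs_m(n))$ lie in $(d-3)A^*(S^{[n-k_0-1]})$, so the right-hand side evaluated at $\rs_m(n)$ lies in $(d-3)A^*(S^{[n-k_0-2]})$. Hence $\rj(k_0+1,r,s_0)(\rs_m(n))\in(d-3)A^*(S^{[n-k_0-2]})$ if and only if $\rj(k_0+1,r,s_0+1)(\rs_m(n))\in(d-3)A^*(S^{[n-k_0-2]})$, which is the asserted equivalence. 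The only delicate point is the bookkeeping in the two reindexings — in particular checking that splitting off $i_{s_0+1}$ is compatible with the $-1$ shift in the $\delta_3$-summation — but this is purely combinatorial and requires no new geometric input; in fact the argument uses only the case $k=k_0$ of the hypothesis, not the full range $k\le k_0$.
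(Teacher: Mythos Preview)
Your argument is correct and is exactly the paper's approach: the paper states the identity $\rj(k_0+1,r,s_0+1)=\delta_0\rj(k_0,r,s_0)+\delta_1\rj(k_0,r-1,s_0)-\rj(k_0+1,r,s_0)$ ``by a direct observation'' and concludes immediately. You have simply spelled out the reindexing behind that observation.
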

 \begin{proof}By a direct observation we have
 \begin{eqnarray}&&\rj(k_0+1,r,s_0+1)(\rs_m(n))\nonumber\\
 &=&\delta_0\rj(k_0,r,s_0)(\rs_m(n))+\delta_1\rj(k_0,r-1,s_0)(\rs_m(n))-\rj(k_0+1,r,s_0)(\rs_m(n)).\nonumber\end{eqnarray}
 Hence the lemma.
 \end{proof}
 \begin{proof}[Proof of Proposition \ref{midmain}]By Proposition \ref{simain} it is enough to prove 
\begin{equation}\label{res}\ri(k,r)(\rs_m(n))\in (d-3)A^*(S^{[n-k-1]}).\end{equation} 
 
We do the induction on $k$.  For $k=0$ (\ref{res}) is obvious.  Assume (\ref{res}) holds for $k\leq k_0$, then by Lemma \ref{cad} we have 
\[\rj(k,r,1)(\rs_m(n))\in (d-3)A^*(S^{[n-k-1]}),~\forall k\leq k_0.\]
By applying Lemma \ref{cad} repeatedly, we have 
\[\rj(k,r,s)(\rs_m(n))\in (d-3)A^*(S^{[n-k-1]}),~\forall k,s\leq k_0;\] 
and moreover (\ref{res}) for $k=k_0+1$ is equivalent to the following equation
\begin{equation}\label{rese}\rj(k_0+1,r,k_0+1)(\rs_m(n))\in (d-3)A^*(S^{[n-k_0-2]}).\end{equation}

Recall that 
$$\Xi(k)=\left\{Ad_{\delta_0}(Ad_{\delta_1})^k(\delta_2)+\sum_{s=0}^{k-1}Ad_{\delta_0}(Ad_{\delta_1})^{k-1-s}Ad_{\delta_0}(Ad_{\delta_1})^s(\delta_3)\right\}.$$Then we have 
\begin{eqnarray}&&\rj(k_0+1,r,k_0+1)\nonumber\\&=&\sum_{\substack{i_1,\cdots,i_{k_0+1}\in\{0,1\}\\\sum_{l=1}^{k_0+1}i_l=r}}Ad_{\delta_{i_1}}\cdots Ad_{\delta_{i_{k_0+1}}}(\delta_2)+\sum_{\substack{j_1,\cdots,j_{k_0+1}\in\{0,1\}\\ \sum_{l=1}^{k_0+1}j_l=r-1}}Ad_{\delta_{j_1}}\cdots Ad_{\delta_{j_{k_0+1}}}(\delta_3)\nonumber\\
&=&\sum_{s=0}^{k_0}\sum_{\substack{a_1,\cdots,a_{k_0-s}\in\{0,1\}\\\sum_{l=1}^{k_0-s}a_l=r-s}}Ad_{\delta_{a_1}}\cdots Ad_{\delta_{a_{k_0-s}}}\Xi(s).\nonumber
\end{eqnarray}
The proposition follows from Corollary \ref{vanxi}.
\end{proof}

%%%%%%%%%%%%%%%%%%.    References.    %%%%%%%%%%%

Yao Yuan\\
Beijing National Center for Applied Mathematics,\\
Academy for Multidisciplinary Studies, \\
Capital Normal University, 100048, Beijing, China\\
E-mail: 6891@cnu.edu.cn.
\end{document}